\DeclareMathAlphabet{\mathbbold}{U}{bbold}{m}{n}
		\titleformat*{\section}{\center\large} 
		\titleformat*{\subsection}{\sf\large} 
		\titleformat*{\subsubsection}{\sf\it} 
\definecolor{ToDo}{RGB}{30,144,255}
\definecolor{Provisional}{RGB}{218,165,32}
\definecolor{Question}{RGB}{220,20,60}
\definecolor{GreenCite}{RGB}{47, 79, 79}
\numberwithin{equation}{section} 
\theoremstyle{plain}
\newtheorem{theoSec}{Theorem}[section] 
\newtheorem{lemSec}[theoSec]{Lemma}
\newtheorem{proSec}[theoSec]{Proposition}
\newtheorem{corSec}[theoSec]{Corollary}
\newtheorem{defiSec}[theoSec]{Definition}
\newtheorem{remSec}[theoSec]{Remark}
\theoremstyle{remark}
\theoremstyle{plain}
\newtheorem{theo}{Theorem}[subsection] 
\newtheorem{theodefi}[theo]{Theorem-Definition}
\newtheorem{lem}[theo]{Lemma}
\newtheorem{pro}[theo]{Proposition}
\newtheorem{cor}[theo]{Corollary}
\newtheorem{defi}[theo]{Definition}
\newtheorem{rem}[theo]{Remark}
\newtheorem{ex}[theo]{Example}
\newtheorem{exs}[theo]{Examples}
\theoremstyle{remark}
\newtheorem{note}[theo]{Note}
\theoremstyle{plain}
\theoremstyle{remark}
\title{\textbf{Permanence of the torsion-freeness property \\for divisible discrete quantum subgroups}}
\author{\textsc{Rubén Martos}\thanks{Department of Mathematical Sciences, University of Copenhagen (Denmark). R.M. is supported by the European Union's Horizon 2020 research and innovation programme under the Marie Skłodowska-Curie grant agreement No 895141.}}
\date{}
\begin{document}
\maketitle
\renewcommand{\abstractname}{}
\vspace{-2.5cm}
	
\begin{abstract}
\textsc{Abstract}. We prove that torsion-freeness in the sense of Meyer-Nest is preserved under divisible discrete quantum subgroups. As a consequence, we obtain some stability results of the torsion-freeness property for relevant constructions of quantum groups (quantum (semi-)direct products, compact bicrossed products and quantum free products). We improve some stability results concerning the Baum-Connes conjecture appearing already in a previous work of the author. For instance, we show that the (resp. strong) Baum-Connes conjecture is preserved by discrete quantum subgroups (without any torsion-freeness or divisibility assumption).
	
\bigskip
\textsc{Keywords.} Baum-Connes conjecture, Compact/Discrete quantum groups, C$^*$-tensor categories, divisible discrete quantum subgroups, module C$^*$-categories, torsion, triangulated categories.
\end{abstract}

\tableofcontents

\section{\textsc{Introduction}}
A major problem when studying the quantum counterpart of the Baum-Connes conjecture for a discrete quantum group $\widehat{\mathbb{G}}$ is the torsion structure of $\widehat{\mathbb{G}}$. Indeed, if $G$ is a discrete group, its torsion phenomena is completely described in terms of the finite subgroups of $G$ and encoded in the localizing subcategory of $\mathscr{K}\mathscr{K}^{G}$ of \emph{compactly induced $G$-C$^*$-algebras}, denoted by $\mathscr{L}_G$, according to the Meyer-Nest reformulation \cite{MeyerNest}. The notion of torsion for a genuine discrete quantum group, $\widehat{\mathbb{G}}$, has been introduced firstly by R. Meyer and R. Nest \cite{MeyerNestTorsion}, \cite{MeyerNestHomological2} in terms of ergodic actions of $\mathbb{G}$. It has been re-interpreted later by Y. Arano and K. De Commer in terms of fusion rings and module C$^*$-categories \cite{YukiKenny}. In order to apply the Meyer-Nest strategy in the quantum setting, one needs a \emph{complementary pair} of localizing subcategories, $(\mathscr{L}_{\widehat{\mathbb{G}}},\mathscr{N}_{\widehat{\mathbb{G}}})$, where $\mathscr{L}_{\widehat{\mathbb{G}}}$ must encode the torsion phenomena of $\widehat{\mathbb{G}}$. A candidate has been apparent for specific examples \cite[Section 1]{MeyerNestTorsion} and \cite[Section 5]{VoigtBaumConnesAutomorphisms} (see also \cite[Section 4.1.2]{RubenThesis} for a description for general discrete quantum groups), but it has been an open question whether the corresponding pair is complementary in $\mathscr{K}\mathscr{K}^{\widehat{\mathbb{G}}}$, which prevented from having a definition of a quantum assembly map whenever $\widehat{\mathbb{G}}$ is not torsion-free. Recently, Y. Arano and A. Skalski \cite{YukiBCTorsion} have observed that the candidates for $\mathscr{L}_{\widehat{\mathbb{G}}}$ and $\mathscr{N}_{\widehat{\mathbb{G}}}$ form indeed a complementary pair of subcategories in $\mathscr{K}\mathscr{K}^{\widehat{\mathbb{G}}}$, which allows to define a quantum assembly map for every discrete quantum group $\widehat{\mathbb{G}}$ (torsion-free or not). Moreover, following a different approach by studying the projective representation theory of a compact quantum group, the same conclusion is reached for permutation torsion-free discrete quantum groups by the author in collaboration with K. De Commer and R. Nest \cite{KennyNestRubenBCProjective}.

In this article, we study some properties in relation with the torsion phenomenon as well as the Baum-Connes property for discrete quantum subgroups.

On the one hand, we answer in the affirmative the question: \emph{is torsion-freeness preserved by divisible discrete quantum subgroups?} Let $\mathbb{G}$ and $\mathbb{H}$ be two compact quantum groups such that $\widehat{\mathbb{H}}<\widehat{\mathbb{G}}$. It is well-known that torsion-freeness in the sense of Meyer-Nest is \emph{not} preserved by discrete quantum subgroups in general. The approach of Arano-De Commer \cite{YukiKenny} allows to introduce a stronger notion of torsion-freeness, which also is \emph{not} preserved by discrete quantum subgroups. However, they show that it is preserved when $\widehat{\mathbb{H}}$ is \emph{divisible} in $\widehat{\mathbb{G}}$. Therefore it is reasonable to ask for the same stability for torsion-freeness in the sense of Meyer-Nest. The notion of divisible discrete quantum subgroup was introduced in \cite{VoigtBaumConnesUnitaryFree}. Roughly speaking, this property corresponds to the existence of a section of the canonical quotient map, so every inclusion of classical discrete groups is divisible. It turns out that this class of discrete quantum subgroups have a well-behaved representation theory, which can be taken advantage to study the torsion phenomenon. More precisely, given a torsion action of $\mathbb{H}$, $(B, \beta)$, we  can consider a torsion action of $\mathbb{G}$, $(\text{Ind}^{\mathbb{G}}_{\mathbb{H}} (B), \widetilde{\beta})$, by simply composing with the natural inclusion $C(\mathbb{H})\subset C(\mathbb{G})$ given by $\widehat{\mathbb{H}}<\widehat{\mathbb{G}}$. A result of K. De Commer and M. Yamashita \cite{KennyMakoto} states a one-to-one correspondence (up to equivariant Morita equivalence) between ergodic actions of $\mathbb{G}$ and connected $\mathscr{Rep}(\mathbb{G})$-module C$^*$-categories. Thus the main task to answer the question above is to understand the relation between the fusion rings associated to the module categories obtained from these torsion actions: $\mathscr{N}_\beta \leftrightarrow (B,\beta)$ and $\mathscr{M}_{\widetilde{\beta}}\leftrightarrow (\text{Ind}^{\mathbb{G}}_{\mathbb{H}} (B), \widetilde{\beta})$. In the framework of Arano-De Commer's approach to torsion-freeness, it is natural to look at the following construction. Given the $\mathscr{Rep}(\mathbb{H})$-module C$^*$-category $\mathscr{N}_\beta$, we can consider inducing it to obtain a $\mathscr{Rep}(\mathbb{G})$-module C$^*$-category, $\text{Ind}^{\mathscr{Rep}(\mathbb{G})}_{\mathscr{Rep}(\mathbb{H})}(\mathscr{N}_\beta)$. This involves the notion of \emph{balanced or relative tensor product of module C$^*$-categories with respect to a C$^*$-tensor category}. Such a construction is well-known in the algebraic framework (see for instance \cite{Deligne}, \cite{Tambara} or \cite{EtingofFusCatHomTh}) and foreseen by experts at the C$^*$-level (see for instance \cite{AmbrogioLimitsCCat}, \cite{MeyerColimitsCCorresp} or \cite{SergeyMakoto}), but a formal and general definition seemed to be elusive in the literature. J. Antoun and C. Voigt have defined such a tensor product without any semi-simplicity or rigidity assumption on the C$^*$-categories involved. First, we show that the divisibility property extends at the level of the representation categories (see Proposition \ref{pro.DivisibilityRepCat}), which allows to identify the fusion module of $\text{Ind}^{\mathscr{Rep}(\mathbb{G})}_{\mathscr{Rep}(\mathbb{H})}(\mathscr{N}_\beta)$ to the induced module from $\text{Fus}(\mathscr{N}_\beta)$. This allows in turn to answer the above question (see Theorem \ref{theo.TorsionFreenessDivisible}). Second, we show moreover that induction at the level of torsion actions corresponds precisely to induction at the level of module C$^*$-categories. In other words, we show that $\text{Ind}^{\mathscr{Rep}(\mathbb{G})}_{\mathscr{Rep}(\mathbb{H})}(\mathscr{N}_\beta)\cong \mathscr{M}_{\widetilde{\beta}}$ (see Theorem \ref{theo.FusionModulesInducedAction}) hence $\text{Ind}^{\text{R}(\mathbb{G})}_{\text{R}(\mathbb{H})}\Big(\text{Fus}(\mathscr{N}_\beta)\Big)\cong \text{Fus}\Big(\mathscr{M}_{\widetilde{\beta}}\Big)$. As an application, we obtain stability properties of torsion-freeness with respect to relevant constructions of quantum groups such as quantum (semi-)direct products, compact bicrossed products and quantum free products (see Proposition \ref{pro.DivisibleConstructions} and Corollary \ref{cor.StabilityTorsionFree}).

On the other hand, it is well-known that the \emph{strong} Baum-Connes property is preserved by \emph{divisible} discrete quantum subgroups whenever $\widehat{\mathbb{G}}$ is torsion-free \cite{VoigtBaumConnesUnitaryFree}. Now, thanks to the analysis carried out in the present paper we can simply drop both the torsion-freeness and the divisibility assumptions (notice that this property appeared already in \cite{RubenAmauryTorsion} as a remark). In addition, thanks to the recent general formulation of the quantum assembly map we can as well generalize this stability result for the Baum-Connes property (see Corollary \ref{cor.BCQuantumSubgroups}).

\bigskip
\textsc{Acknowledgements.} The author would like to thank R. Vergnioux for reading the preliminary versions of this paper. He is grateful to K. De Commer for his valuable comments and suggestions that allowed to improve this paper. He also wants to thank M. Yamashita for interesting discussions around the construction of relative tensor product of module C$^*$-categories. He is grateful to the anonymous referee for their valuable comments.

\section{\textsc{Preliminaries}}\label{sec.Background}
	\subsection{Notations and conventions}\label{sec.Notations}
	
	
	We denote by $\mathcal{B}(H)$ (resp. $\mathcal{K}(H)$) the space of all linear (resp. compact) operators of the Hilbert space $H$ and by $\mathcal{L}_{A}(H)$ (resp. $\mathcal{K}_A(H)$) the space of all (resp. compact) adjointable operators of the Hilbert $A$-module $H$. We use the notation $\mathcal{L}_{\mathbb{G}}(H)$ (resp. $\mathcal{K}_\mathbb{G}(H)$) for the \emph{equivariant} (resp. compact) adjointable operators of the Hilbert $A$-module $H$ with respect to a compact (quantum) group $\mathbb{G}$. All our C$^*$-algebras (except for obvious exceptions such as multiplier C$^*$-algebras and von
Neumann algebras) are supposed to be \emph{separable} and all our Hilbert modules are supposed to be \emph{countably generated}. Hilbert $A$-modules are considered to be \emph{right $A$-modules}, so that the corresponding inner products are considered to be conjugate-linear on the left and linear on the right. We use systematically the leg and Sweedler notations. The symbol $\otimes$ stands for the minimal tensor product of C$^*$-algebras and the exterior/interior tensor product of Hilbert modules depending on the context. The symbol $\underset{\max}{\otimes}$ stands for the maximal tensor product of C$^*$-algebras. If $M$ and $N$ are two $R$-modules for some ring $R$, the symbol $\underset{R}{\odot}$ stands for their algebraic tensor product over $R$, and we write $M\underset{R}{\odot} N$. If $S, A$ are C$^*$-algebras, we denote by $M(A)$ the multiplier algebra of $A$. 
	If $H$ is a finite dimensional Hilbert space and $\{\xi_1,\ldots,\xi_{dim(H)}\}$ is an orthonormal basis of $H$, the associated matrix units in $\mathcal{B}(H)$ are denoted by $\{e_{ij}\}_{i,j=1,\ldots,dim(H)}$. The coordinate linear forms on $\mathcal{B}(H)$ are denoted by $\omega_{\xi_i,\xi_j}:= \omega_{i,j}$ and defined by $\omega_{\xi_i,\xi_j}(T):=\langle  \xi_i, T(\xi_j)\rangle$, for all $i,j=1,\ldots, dim(H)$ and all $T\in\mathcal{B}(H)$.
	
	If $\mathbb{G}=(C(\mathbb{G}), \Delta)$ is a compact quantum group, the set of all unitary equivalence classes of irreducible unitary finite dimensional representations of $\mathbb{G}$ is denoted by $\text{Irr}(\mathbb{G})$. The trivial representation of $\mathbb{G}$ is denoted by $\epsilon$. If $x\in \text{Irr}(\mathbb{G})$ is such a class, we write $u^x\in\mathcal{B}(H_x)\otimes C(\mathbb{G})$ for a representative of $x$ and $H_x$ for the finite dimensional Hilbert space on which $u^x$ acts (we write $dim(x):= n_x$ for the dimension of $H_x$). The matrix coefficients of $u^x$ with respect to an orthonormal basis $\{\xi^x_1,\ldots, \xi^x_{n_x}\}$ of $H_x$ are defined by $u^x_{ij}:=(\omega_{ij}\otimes id)(u^x)$ for all $i,j=1,\ldots, n_x$. The linear span of matrix coefficients of all finite dimensional representations of $\mathbb{G}$ is denoted by $\text{Pol}(\mathbb{G})$, which is a Hopf $*$-algebra with co-multipliction $\Delta$ and co-unit and antipode denoted by $\varepsilon_\mathbb{G}$ and $S_\mathbb{G}$, respectively. Given $x,y\in \text{Irr}(\mathbb{G})$, the tensor product of $x$ and $y$ is denoted by $x\otimes y$. Given $x\in \text{Irr}(\mathbb{G})$, we denote by $Q_x:=J^*_xJ_x$ the canonical invertible positive self-adjoint operator such that $dim_q(x):=Tr(Q_x)=Tr(Q_x^{-1})$, where $J_x:H_x\longrightarrow H_{\overline{x}}$ is the antilinear isomorphism associated to a non-trivial invariant vector in $H_x\otimes H_{\overline{x}}$ by virtue of $Mor(\epsilon, x\otimes \overline{x})\neq 0\neq Mor(\epsilon, \overline{x}\otimes x)$. Let $\{\xi^x_1,\ldots, \xi^x_{n_x}\}$ be an orthonormal basis of $H_x$ that diagonalizes $Q_x$ and let $\{\omega^x_1,\ldots, \omega^x_{n_x}\}$ be its dual basis in $\overline{H}_x$. If $\{\xi^{\overline{x}}_1,\ldots, \xi^{\overline{x}}_{n_{\overline{x}}}\}$ is an orthonormal basis of $H_{\overline{x}}$ and $\{\omega^{\overline{x}}_1,\ldots, \omega^{\overline{x}}_{n_{\overline{x}}}\}$ denotes its dual basis in $\overline{H}_{\overline{x}}$ , then we identify systematically $\overline{H}_{\overline{x}}$ and $H_{x}$ via the linear map $\omega^{\overline{x}}_k\mapsto \frac{1}{\sqrt{\lambda^x_k}}\xi^x_k$, for all $k=1,\ldots, n_x$, where $\{\lambda^x_1, \ldots, \lambda^x_{n_x}\}$ are the eigenvalues of $Q_x$. 
	
	The Haar state of $\mathbb{G}$ is denoted by $h_{\mathbb{G}}$. The GNS construction corresponding to $h_{\mathbb{G}}$ is denoted by $(L^2(\mathbb{G}), \lambda, \xi_{\mathbb{G}})$. We also write $\Lambda(x) = \lambda(x)\xi_{\mathbb{G}}$ for $x\in C(\mathbb{G})$. We adopt the standard convention for the inner product on $L^2(\mathbb{G})$, which means that $\langle \Lambda(x), \Lambda(y)\rangle:=h_{\mathbb{G}}(x^*y)$ for all $x,y\in C(\mathbb{G})$. We suppress the notation $\lambda$ in computations so that we simply write $x\Lambda(y)=\Lambda(xy)$ for all $x,y\in C(\mathbb{G})$. We will make the standing assumption that $h_{\mathbb{G}}$ is faithful, so we only work with the reduced form of a compact quantum group, hence $C_r(\mathbb{G})=C(\mathbb{G})$ unless the contrary is specified; where $C_r(\mathbb{G})$ denotes the C$^*$-algebra $\lambda(C(\mathbb{G}))\subset \mathcal{B}(L^2(\mathbb{G}))$. The maximal form of $\mathbb{G}$ is given by the C$^*$-envelopping algebra of $\text{Pol}(\mathbb{G})$, denoted by $C_m(\mathbb{G})$. For more details of these definitions and constructions we refer to \cite{Woronowicz}.

	If $\mathbb{H}$ is another compact quantum group, we say that $\widehat{\mathbb{H}}$ is a discrete quantum subgroup of $\widehat{\mathbb{G}}$ if one (hence all) of the following conditions hold: $i)$ $\text{Pol}(\mathbb{H})$ is a Hopf $*$-subalgebra of $\text{Pol}(\mathbb{G})$, $ii)$ $C_r(\mathbb{H})\overset{\iota}{\subset} C_r(\mathbb{G})$ such that $\iota$ intertwines the co-multiplications, $iii)$ $C_m(\mathbb{H})\overset{\iota}{\subset} C_m(\mathbb{G})$ such that $\iota$ intertwines the co-multiplications; $iv)$ $\mathscr{Rep}(\mathbb{H})$ is a full subcategory of $\mathscr{Rep}(\mathbb{G})$ containing the trivial representation and stable by direct sums, tensor product and adjoint operations. See \cite{SoltanSubgroups} for more details. In this case we write $\widehat{\mathbb{H}}<\widehat{\mathbb{G}}$. Note that in this case we have $\epsilon:=\epsilon_\mathbb{G}=\epsilon_{\mathbb{H}}$. 
	The trivial quantum subgroup of $\widehat{\mathbb{G}}$ is denoted by $\mathbb{E}$. 
	
	\subsubsection*{Quantum semi-direct products.}
	
	Let $\Gamma$ be a discrete group, $\mathbb{G}$ a compact quantum group. Assume that $\Gamma$ acts on $\mathbb{G}$ by quantum automorphisms, $\alpha$. We recall the description of the representation theory of $\mathbb{F}:=\Gamma\underset{\alpha}{\ltimes}\mathbb{G}$ given by S. Wang \cite{WangSemidirect}. Firstly, we have $C(\mathbb{F})=\Gamma\underset{\alpha}{\ltimes} C(\mathbb{G})$ so that $\pi:C(\mathbb{G})\longrightarrow \Gamma\underset{\alpha}{\ltimes} C(\mathbb{G})$ denotes the non-degenerate faithful $*$-homomorphism and $u:\Gamma\longrightarrow \mathcal{U}(\Gamma\underset{\alpha}{\ltimes} C(\mathbb{G}))$ the group homomorphism defining this crossed product. For every irreducible representation $y\in \text{Irr}(\mathbb{F})$, take a representative $w^y\in\mathcal{B}(H_y)\otimes C(\mathbb{F})$. Then there exist (cf. \cite{WangSemidirect}) unique $\gamma\in \Gamma$ and $x\in \text{Irr}(\mathbb{G})$ such that if $w^\gamma\in\mathbb{C}\otimes C^*_r(\Gamma)$ and $w^x\in\mathcal{B}(H_x)\otimes C(\mathbb{G})$ are respective representatives of $\gamma$ and $x$, then we have $w^y\cong v^\gamma\otimes v^x\in\mathcal{B}(\mathbb{C}\otimes H_x)\otimes C(\mathbb{F})\mbox{,}$ where $v^\gamma:=(id\otimes u)(w^\gamma)\in\mathbb{C}\otimes C(\mathbb{F})$ and $v^x:=(id\otimes \pi)(w^x)\in\mathcal{B}(H_x)\otimes C(\mathbb{F})$. Hence we label $\text{Irr}(\mathbb{F})$ with couples $y:=(\gamma, x)$ where $\gamma\in\Gamma$ and $x\in\text{Irr}(\mathbb{G})$ and we define the associated representative to be $w^y:=w^{(\gamma, x)}$.
			
			Since $\alpha$ is an action of $\Gamma$ on $\mathbb{G}$ by quantum automorphisms, then for every $\gamma\in\Gamma$, we have that $(id\otimes \alpha_\gamma)(w^x)$ is an irreducible unitary finite dimensional representation of $\mathbb{G}$ on $H_x$ whenever $x\in \text{Irr}(\mathbb{G})$. Hence there exists a unique class $\alpha_\gamma(x)\in \text{Irr}(\mathbb{G})$ such that $(id\otimes \alpha_\gamma)(w^x)\cong w^{\alpha_\gamma(x)}$. Since $dim(\alpha_\gamma(x))=dim(x)$ we can assume that $w^{\alpha_\gamma(x)}\in\mathcal{B}(H_x)\otimes C(\mathbb{G})$, for all $\gamma\in\Gamma$ (if this is not the case, we might change the representative of $\alpha_\gamma(x)$ by an appropriate one in the orbit of $x$). Hence, there exists a unique, up to a multiplicative factor in $S^1$, unitary operator $V_{\gamma, x}\in \mathcal{U}(H_x)$ such that 
$(id\otimes\alpha_\gamma)(w^x)=(V_{\gamma, x}\otimes id)w^{\alpha_\gamma(x)}(V^*_{\gamma, x}\otimes id)$. Notice that it is clear that $\alpha_e(x)=x$, for all $x\in \text{Irr}(\mathbb{G})$ and that $\alpha_\gamma(\epsilon)=\epsilon$, for all $\gamma\in\Gamma$. Therefore, we can choose the multiplicative factor defining $V_{\gamma, x}$ such that $V_{e, x}=id_{H_x}$, for all $x\in \text{Irr}(\mathbb{G})$ and $V_{\gamma, \epsilon}=1_\mathbb{C}$, for all $\gamma\in\Gamma$. We keep this choice for the sequel.
			
			Let $\gamma,\gamma'\in \Gamma$ and $x,x'\in \text{Irr}(\mathbb{G})$ be irreducible representations of $\Gamma$ and $\mathbb{G}$, respectively. Consider the corresponding irreducible representations of $\mathbb{F}$, say $y:=(\gamma,x), y':=(\gamma', x')\in \text{Irr}(\mathbb{F})$. We know that $w^y=v^\gamma\otimes v^x$ and $w^{y'}=v^{\gamma'}\otimes v^{x'}$, where $v^\gamma:=(id\otimes u)(w^\gamma), v^{\gamma'}:=(id\otimes u)(w^{\gamma'})\in\mathbb{C}\otimes C(\mathbb{F})$ and $v^x:=(id\otimes \pi)(w^x), v^{x'}:=(id\otimes \pi)(w^{x'})\in\mathcal{B}(H_x)\otimes C(\mathbb{F})$. A straightforward computation yields the following formula $w^{y\otimes y'}=v^{\gamma\gamma'}\otimes\big((V_{\gamma'^{-1}}\otimes id)v^{\alpha_{\gamma'^{-1}}(x)}(V^*_{\gamma'^{-1}}\otimes id)\otimes v^{x'}\big)$.
	
	\subsubsection*{Compact bicrossed products.}
	
	Let $(\Gamma,G)$ be a matched pair of a discrete group $\Gamma$ and a compact group $G$. Following \cite{FimaBiproduit}, \cite{FimaHuaBiproduit}, this means that we have a left action $\Gamma \overset{\alpha}{\curvearrowright} G$ of $\Gamma$ on the \textit{compact space} $G$ and a (continuous) right action $\Gamma \overset{\beta}{\curvearrowleft} G$ of $G$ on the \textit{discrete space} $\Gamma$ satisfying:
\begin{equation}\label{EqBicrossed}
\begin{split}
&\alpha_r(gh)=\alpha_r(g)\alpha_{\beta_g(r)}(h)\mbox{and } \alpha_r(e)=e\mbox{, $\forall r,s\in\Gamma$}\\
&\beta_g(rs)=\beta_{\alpha_s(g)}(r)\beta_g(s)\mbox{ and }\beta_g(e)=e\mbox{, $\forall g,h\in G$;}
\end{split}
\end{equation}
where $e$ denotes either the identity element in $\Gamma$ or in $G$. Of course, we have $\alpha_{e}=id_G$ and $\beta_{e}=id_{\Gamma}$. Observe that the above relations imply that $\#[e]=1$, where $[e]\in\Gamma/G$ is the corresponding class in the orbit space. We denote by $\mathbb{F}:=\Gamma {_\alpha}\bowtie_{\beta}G$ the associated bicrossed product. Let us recall the description of the representation theory of $\mathbb{F}$ given in \cite{FimaHuaBiproduit}. For every class $\gamma\cdot G\equiv[\gamma]\in \Gamma/G$ in the orbit space, we define the \emph{clopen} subsets of $G$, $G_{r,s}:=\{g\in G : \beta_g(r)=s\}$, for every $r,s\in \gamma\cdot G$. Consider as well its characteristic function, say $\mathbbold{1}_{A_{r,s}}=: \mathbbold{1}_{r,s}$, for all $r,s\in \gamma\cdot G$. We can show that $v_\gamma:=\Big(\mathbbold{1}_{r,s}\Big)_{r,s\in \gamma\cdot G}\in \mathcal{M}_{|\gamma\cdot G|}(\mathbb{C})\otimes C(G)$ is a magic unitary and a unitary representation of $G$.
Given $\gamma\in\Gamma$, we denote by $G_\gamma:=\{g\in G\,:\,\beta_g(\gamma)=\gamma\}$ the stabilizer of $\gamma$ for the action $\Gamma \overset{\beta}{\curvearrowleft} G$. Note that $G_\gamma$ is a clopen subgroup of $G$ with index $\vert\gamma\cdot G\vert$. We view $C(G_\gamma)=v_{\gamma\gamma}C(G)\subset C(G)$ as a non-unital C*-subalgebra. Let us denote by $\nu$ the Haar probability measure on $G$ and note that $\nu(G_\gamma)=\frac{1}{\vert\gamma \cdot G\vert}$ so that the Haar probability measure $\nu_{\gamma}$ on $G_\gamma$ is given by $\nu_\gamma(A)=\vert\gamma\cdot G\vert\,\nu(A)$ for all Borel subset $A$ of $G_\gamma$. Let us note that, by the bicrossed product relations, $\alpha_\gamma$ defines a group isomorphism (and an homeomorphism) from $G_\gamma$ to $G_{\gamma^{-1}}$, for all $\gamma\in\Gamma$. Given $\gamma\in\Gamma$ we fix a section, still denoted $\gamma$, $\gamma\,:\,\gamma\cdot G\rightarrow G$ of the canonical surjection $G\longrightarrow\gamma\cdot G\,:\,g\mapsto\beta_g(\gamma)$. This means that $\gamma\,:\,\gamma\cdot G\longrightarrow G$ is an injective map such that $\gamma\cdot\gamma(r)=r$ for all $r\in\gamma\cdot G$. We chose the section $\gamma$ such that $\gamma(\gamma)=e$, for all $\gamma\in\Gamma$. For $\gamma\in\Gamma$ and $r,s\in\gamma\cdot G$, we denote by $\psi^\gamma_{r,s}$ the $\nu$-preserving homeomorphism of $G$ defined by $\psi^\gamma_{r,s}(g)=\gamma(r)g\gamma(s)^{-1}$. It follows from our choices that $\psi^\gamma_{\gamma,\gamma}=id$, for all $\gamma\in\Gamma$. Moreover, for all $g\in G$, we have $g\in G_{r,s}\Leftrightarrow\psi^\gamma_{r,s}(g)\in G_\gamma$.

Let $u\,:\,G_\gamma\longrightarrow\mathcal{U}(H)$ be a unitary representation of $G_\gamma$ and view $u$ as a continuous function $G\longrightarrow\mathcal{B}(H)$ which is zero outside $G_\gamma$ i.e. a partial isometry in $\mathcal{B}(H)\otimes C(G)$ such that $uu^*=u^*u=id_H\otimes \mathbbold{1}_{\gamma\gamma}$. 
In the sequel we view $C(G)\subset C(\mathbb{F})$ so that $u\circ\psi^\gamma_{r,s}:=(g\mapsto u(\psi^\gamma_{r,s}(g)))\in\mathcal{B}(H)\otimes C(G)\subset\mathcal{B}(H)\otimes C(\mathbb{F})$. We define:

$$\gamma(u):=\sum_{r,s\in\gamma\cdot G}e_{rs}\otimes(1\otimes u_r\mathbbold{1}_{rs})u\circ\psi^\gamma_{r,s}\in\mathcal{B}(l^2(\gamma\cdot G))\otimes \mathcal{B}(H)\otimes C(\mathbb{F}),$$
where $e_{rs}\in\mathcal{B}(l^2(\gamma\cdot G))$, $r,s\in\gamma\cdot G$, are the matrix units associated to the canonical basis of $l^2(\gamma\cdot G)$. It is known from \cite{FimaHuaBiproduit} that , for any $\gamma\in\Gamma$ and any unitary representation $u$ of $G_\gamma$, $\gamma(u)$ is a unitary representation of $\mathbb{F}$. It is easy to see that its character is given by $\chi(\gamma(u))=\underset{r\in\gamma\cdot G}{\sum}u_r\mathbbold{1}_{rr}\chi(u)\circ\psi^\gamma_{r,r}$. Using the formula for the character we find easily that, for any $\gamma,\mu\in\Gamma$ and any unitary representation $u$ and $v$ of $G_\gamma$ and $G_\mu$, respectively one has:
\begin{equation}\label{EqDim}
{\rm dim}_\mathbb{G}(\gamma(u),\mu(v))=\delta_{\gamma\cdot G,\mu\cdot G}{\rm dim}_{G_\gamma}(u,v\circ\psi_{\gamma,\gamma}^\mu).
\end{equation}
Note that the formula above makes sense since, for any $\mu\in\gamma\cdot G$, $\psi_{\gamma,\gamma}^\mu$ is an isomorphism (and homeomorphism) from $G_\gamma$ to $G_\mu$ so that $v\circ\psi_{\gamma,\gamma}^\mu$ is a unitary representation of $G_\gamma$. In particular, it implies that any representation of the form $\gamma(u)$ is irreducible if and only if $u$ is irreducible. It is also easy to deduce from the character formula that $\overline{\gamma(u)}\simeq\gamma^{-1}(\overline{u}\circ\alpha_{\gamma^{-1}})$. 
Moreover, one can show that any irreducible unitary representation of $\mathbb{F}$ is equivalent to some $\gamma(u)$. 

Finally, the fusion rules are described as follows. Let $\gamma,\mu\in\Gamma$ and $u\,:\, G_\gamma\longrightarrow\mathcal{U}(H_u)$, $v\,:\, G_\mu\longrightarrow\mathcal{U}(H_v)$ be unitary representations of $G_\gamma$ and $G_\mu$, respectively. For any $r\in(\gamma\cdot G)(\mu\cdot G)$, we define, the $r$-twisted tensor product of $u$ and $v$, denoted $u\underset{r}{\otimes} v$ (see \cite{FimaHuaBiproduit} for the precise formula) as a unitary representation of $G_r$ on $K_r\otimes H_u\otimes H_v$, where $K_r:={\rm Span}(\{e_s\otimes e_t\,:\,s\in\gamma\cdot G\text{ and }t\in\mu\cdot G\text{ such that }st=r\})\subset l^2(\gamma\cdot G)\otimes l^2(\mu\cdot G)$. 
It is shown in \cite{FimaHuaBiproduit} that the formula above defines a unitary representation of $G_r$ and the following fusion formula holds, for all $\gamma_1,\gamma_2,\gamma_2\in\Gamma$ and all $u,v,w$ unitary representations of $G_{\gamma_1}$, $G_{\gamma_2}$ and $G_{\gamma_3}$ respectively:
\begin{equation}\label{eq.FusionBicrossed}
{\rm dim}_\mathbb{G}(\gamma_1(u),\gamma_2(v)\otimes\gamma_3(w))=\left\{\begin{array}{ll}\underset{r\in\gamma_1\cdot G\cap(\gamma_2\cdot G)(\gamma_3\cdot G)}{\sum}\frac{1}{\vert r\cdot G\vert}{\rm dim}_{G_r}(u\circ\psi^{\gamma_1}_{r,r},v\underset{r}{\otimes}w)&\text{if }\gamma_1\cdot G\cap(\gamma_2\cdot G)(\gamma_3\cdot G)\neq\emptyset,\\0&\text{otherwise.}\end{array}\right.
\end{equation}
Note that the formula above makes perfect sense since the map $\psi^{\gamma_1}_{r,r}$ defines a group isomorphism and an homeomorphism from $G_r$ to $G_{\gamma_1}$ for all $r\in \gamma_1\cdot G$ so that $u\circ\psi^{\gamma_1}_{r,r}$ is a unitary representation of $G_r$.

Some further useful remarks for our purpose should be made. Since the inclusion $C(G)\subset C(\mathbb{F})$ intertwines the co-multiplications, we may and will view $\mathscr{Rep}(G)\subset\mathscr{Rep}(\mathbb{F})$. Moreover, we have a map $\Gamma\longrightarrow{\rm Rep}(\mathbb{G})$, $\gamma\mapsto\gamma(\epsilon_\gamma)$, where $\epsilon_\gamma$ denotes the trivial representation of $G_\gamma$. Note that $\gamma(\epsilon_\gamma)$ is an irreducible representation of $\mathbb{F}$ on $l^2(\gamma\cdot G)$, for all $\gamma\in\Gamma$. From Equation $(\ref{EqDim})$, we see that $\gamma(\epsilon_\gamma)\simeq\mu(\epsilon_\mu)$ if and only if $\gamma\cdot G=\mu\cdot G$. 
Note that $\overline{\gamma(\epsilon_\gamma)}\simeq\gamma^{-1}(\epsilon_\gamma)$ and, from the bicrossed product relations, $\gamma^{-1}\cdot G=(\gamma\cdot G)^{-1}$.
	
	\subsubsection*{Quantum direct products.}
	
	Let $\mathbb{G}$ and $\mathbb{H}$ be two compact quantum groups. We recall the description of the representation theory of $\mathbb{F}:=\mathbb{G}\times\mathbb{H}$ given by S. Wang \cite{WangSemidirect}: for every irreducible representation $y\in \text{Irr}(\mathbb{F})$, take a representative $w^y\in \mathcal{B}(H_y)\otimes C(\mathbb{F})$. Then there exist unique irreducible representations $x\in \text{Irr}(\mathbb{G})$ and $z\in \text{Irr}(\mathbb{H})$ such that if $w^x\in \mathcal{B}(H_x)\otimes C(\mathbb{G})$ and $w^z\in \mathcal{B}(H_z)\otimes C(\mathbb{H})$ are respective representatives of $x$ and $z$, then we have $w^y\cong \big[w^x\big]_{13}\big[w^z\big]_{24}\in\mathcal{B}(H_x\otimes H_z)\otimes C(\mathbb{F})\mbox{,}$ where $\big[w^x\big]_{13}$ and $\big[w^z\big]_{24}$ are the corresponding legs of $w^x$ and $w^z$, respectively inside $\mathcal{B}(H_x)\otimes\mathcal{B}(H_z)\otimes C(\mathbb{G})\otimes C(\mathbb{H})$. In this case we write $w^{y}:=w^{(x,z)}$.
			
			Moreover, let $x,x'\in \text{Irr}(\mathbb{G})$ and $z,z'\in \text{Irr}(\mathbb{H})$ be irreducible representations of $\mathbb{G}$ and $\mathbb{H}$, respectively. Consider the corresponding irreducible representations of $\mathbb{F}$, say $y:=(x,z), y':=(x', z')\in \text{Irr}(\mathbb{F})$. We know that $w^y= \big[w^x\big]_{13}\big[w^z\big]_{24}$ and $w^{y'}= \big[w^{x'}\big]_{13}\big[w^{z'}\big]_{24}$, where the legs are considered inside $\mathcal{B}(H_x)\otimes\mathcal{B}(H_z)\otimes C(\mathbb{G})\otimes C(\mathbb{H})$ and $\mathcal{B}(H_{x'})\otimes\mathcal{B}(H_{z'})\otimes C(\mathbb{G})\otimes C(\mathbb{H})$, respectively. A straightforward computation yields that the flip map $H_z\otimes H_{x'}\longrightarrow H_{x'}\otimes H_z$ yields the following obvious identification $w^{y\otimes y'}=\big[w^{x}\otimes w^{x'}\big]_{13}\big[w^z\otimes w^{z'}\big]_{24}$.
	
	\subsubsection*{Quantum free products.}
	
	Let $\mathbb{G}$ and $\mathbb{H}$ be two compact quantum groups. We recall the description of the representation theory of $\mathbb{F}:=\mathbb{G}\ast \mathbb{H}$ given by S. Wang \cite{WangFreeProduct}: for every irreducible representation $y\in \text{Irr}(\mathbb{F})$, take a representative $w^y\in \mathcal{B}(H_y)\otimes C(\mathbb{F})$. Then there exist a natural number $n\in\mathbb{N}$ and irreducible representations $\zeta_1,\ldots, \zeta_n$ either in $\text{Irr}(\mathbb{G})$ or in $\text{Irr}(\mathbb{H})$ such that if $w^{\zeta_i}\in\mathcal{B}(H_i)\otimes C(\mathbb{G} \mbox{ or }\mathbb{H})$ are respective representatives of $\zeta_i$, for all $i=1,\ldots, n$, then we have $w^y\cong w^{\zeta_{1}}\otimes w^{\zeta_{2}}\otimes\ldots\otimes w^{\zeta_{n}}\in\mathcal{B}(H_{1}\otimes\ldots H_{n})\otimes C(\mathbb{F})$ with the $\zeta_i$ alternating between $\text{Irr}(\mathbb{G})$ and $\text{Irr}(\mathbb{H})$.
	
		In addition, the fusion rules are described in the following way.
			\begin{enumerate}[a)]
				\item If $y, y'\in \text{Irr}(\mathbb{F})=\text{Irr}(\mathbb{G})\ast \text{Irr}(\mathbb{H})$ are words such that $y$ ends in $\text{Irr}(\mathbb{G})$ and $y'$ starts in $\text{Irr}(\mathbb{H})$ (or \emph{vice-versa}), then $y\otimes y'=yy'$ is an irreducible representation of $\mathbb{F}$.  
				\item If $y=\zeta x, y'=x'\zeta'\in \text{Irr}(\mathbb{F})=\text{Irr}(\mathbb{G})\ast \text{Irr}(\mathbb{H})$ are words such that $x,x'\in \text{Irr}(\mathbb{G})$ (or in $\text{Irr}(\mathbb{H})$), then $y\otimes y'=\underset{t\subset x\otimes x'}{\bigoplus} \zeta t\zeta'\oplus \delta_{\overline{x}, x'}(\zeta\otimes \zeta')$, where the sum runs over all non-trivial irreducible representations $t\in \text{Irr}(\mathbb{G})$ (or in $\text{Irr}(\mathbb{H})$) contained in $x\otimes x'$ with multiplicity.
			\end{enumerate}

	\subsection{C$^*$-categories}\label{sec.ModCTensorCat}
	Let us gather together some basic material about C$^*$-categories used for our purposes. We refer to \cite{Kassel}, \cite{Sergey} or \cite{Gelaki} for further precisions about C$^*$-tensor categories, module C$^*$-categories and examples. The data defining a C$^*$-tensor category is denoted by $(\mathscr{C}, *, \otimes, \mathbbold{1}, \alpha, l, r)$, where $(\mathscr{C}, *)$ is a C$^*$-category, $\mathbbold{1}$ is the unit object, $\alpha$ is the associativity constraint, $l$ is the left unit constraint and $r$ is the right unit constraint. Then, the data defining a left (resp. right) $\mathscr{C}$-module C$^*$-category is denoted by $(\mathscr{M}, \bullet, \mu, e)$, where $\mathscr{M}$ is a C$^*$-category, $\bullet$ denotes the left (resp. right) action of $\mathscr{C}$ on $\mathscr{M}$, $\mu$ is the associativity constraint with respect to $\bullet$ and $e$ is the unit constraint with respect to $\bullet$. A linear functor between two C$^*$-categories that preserves the $*$-operation is called \emph{C$^*$-functor}. A \emph{module functor} is a C$^*$-functor $F$ between two module C$^*$-categories that preserves the module action, i.e. equipped with unitary natural isomorphisms $\phi_{U, X}:F(U\bullet X)\cong U\bullet F(X)$ for all $U\in\text{Obj}(\mathscr{C})$ and $X\in\text{Obj}(\mathscr{M})$ satisfying coherence diagrams with respect to $e$ and $\mu$.
	
	\begin{note}\label{note.AssumptionsCtensorcat}
		If $(\mathscr{C}, *, \otimes, \mathbbold{1}, \alpha, l, r)$ is a C$^*$-tensor category, we assume the following properties.
	\begin{enumerate}[i)]
		\item The unit object $\mathbbold{1}$ is simple (or irreducible), i.e. $End_{\mathscr{C}}(\mathbbold{1})=\mathbb{C}$.
		\item	It is well-known that every C$^*$-tensor category is unitarily monoidally equivalent to a strict C$^*$-tensor category (result due to Mac Lane and we refer to \cite[Theorem XI.5.3]{Kassel} for a proof). Hence, from now on we assume that $\mathscr{C}$ is strict meaning that the natural equivalences $\alpha$, $l$ and $r$ are identities.
		\item\label{assump.DirectSum} $\mathscr{C}$ has orthogonal finite direct sums. More precisely, given objects $U_1,\ldots, U_n\in Obj(\mathscr{C})$, there exists an object $S\in Obj(\mathscr{C})$ and isometries $u_i\in \text{Hom}_{\mathscr{C}}(U_i, S)$ for each $i=1,\ldots, n$ such that $\overset{n}{\underset{i=1}{\sum}}u_iu^*_i=id_S$ and $u_iu^*_j=\delta_{ij}$, for all $i,j=1,\ldots, n$.
		\item\label{assump.Subobjects} $\mathscr{C}$ has subobjects or retracts. More precisely, for any object $U\in Obj(\mathscr{C})$ and for any projection $p\in End_{\mathscr{C}}(U)$, there exists an object $V\in Obj(\mathscr{C})$ and an isometry $u\in \text{Hom}_{\mathscr{C}}(V, U)$ such that $p=uu^*$. In particular, $\mathscr{C}$ has a zero object. Namely, the object defined by the zero projection.
		
		\emph{Observe that different terminologies are used in the category theory literature for this property. For instance, we also say that $\mathscr{C}$ is subobject/idempotent/Cauchy/Karoubi complete}.
	\end{enumerate}
	\end{note}
	\begin{rem}\label{rem.AssumptionsSemiSimple}			
		Assume that $\mathscr{C}$ is \emph{rigid}, i.e. every object of $\mathscr{C}$ has a conjugate object. This is the case for the C$^*$-categories used for our purpose. In this case, one can show that assumptions (\ref{assump.DirectSum}) and (\ref{assump.Subobjects}) in Note \ref{note.AssumptionsCtensorcat} are equivalent to say that all homomorphism spaces are finite-dimensional and that every object in $\mathscr{C}$ is isomorphic to a finite direct sum of simple objects (cf. \cite{Sergey} for more details). In other words, $\mathscr{C}$ is semi-simple. We mainly work with semi-simple C$^*$-categories, so we assume from now on that $\mathscr{C}$ is semi-simple unless otherwise stated. A module C$^*$-category is called \emph{semi-simple} if the underlying C$^*$-category is semi-simple. The (module) C$^*$-categories used for our purpose are semi-simple.
	\end{rem}
	
	The following are relevant examples of C$^*$-tensor categories for our purpose.
	\begin{exs}
		\begin{enumerate}
			\item The cateogory of all Hilbert spaces and bounded linear maps is a C$^*$-tensor category for the ordinary tensor product of Hilbert spaces, $\mathbb{C}$ being the unit object. It is denoted by $\mathscr{H}ilb$. The corresponding rigid subcategory consists of all finite dimensional Hilberts spaces, denoted by $\mathscr{H}ilb_f$.
			
			\item If $\mathbb{G}$ is a compact quantum group, the category of all its finite dimensional unitary representations and intertwiners is a C$^*$-tensor category for the usual tensor product of representations of $\mathbb{G}$, $\epsilon$ being the unit object. It is denoted by $\mathscr{Rep}(\mathbb{G})$ and it is rigid \cite{Sergey}. The category $\mathscr{Rep}(\mathbb{G})$ satisfies all assumptions considered in Note \ref{note.AssumptionsCtensorcat}; in particular, it is semi-simple (cf. Remark \ref{rem.AssumptionsSemiSimple}).
			\begin{note}
				Given a subset $\mathcal{S}\subset \text{Irr}(\mathbb{G})$, we denote by $\mathscr{C}:=\langle\mathcal{S}\rangle$ the smallest full subcategory of $\mathscr{Rep}(\mathbb{G})$ containing $\mathcal{S}$. If, in addition, $\mathscr{C}$ contains the trivial representation and it is closed under taking tensor product and contragredient representations, by Tannaka-Krein-Woronowicz duality, there is an associated C$^*$-subalgebra $C(\mathbb{H})$ such that restricting the coproduct to $C(\mathbb{H})$ endows it with the structure of compact quantum group $\mathbb{H}$. Moreover, $\mathscr{Rep}(\mathbb{H})$ naturally identifies with $\mathscr{C}$ and we say that \emph{$\widehat{\mathbb{H}}$ is the quantum subgroup of $\widehat{\mathbb{G}}$ generated by $\mathcal{S}$}.
			\end{note}
			
			\item Let $\mathscr{C}$, $\mathscr{D}$ be C$^*$-categories. Denote by $\mathscr{F}un(\mathscr{C}, \mathscr{D})$ the category whose objects are the C$^*$-functors between $\mathscr{C}$ and $\mathscr{D}$ and whose homomorphisms are the natural transformations between C$^*$-functors, which are uniformly bounded.
			Then $\mathscr{F}un(\mathscr{C}, \mathscr{D})$ is a C$^*$-category in the following way. The involution $*:\mathscr{F}un(\mathscr{C}, \mathscr{D})\longrightarrow \mathscr{F}un(\mathscr{C}, \mathscr{D})$ given by the identity on objects and $(\eta^*)_U:=(\eta_U)^*$, for all \\$\eta\in \text{Hom}_{\mathscr{F}un(\mathscr{C}, \mathscr{D})}(F, F')$ with $F, F'\in Obj\Big(\mathscr{F}un(\mathscr{C}, \mathscr{D})\Big)$ and all $U\in Obj(\mathscr{C})$. For every objects $F, F'\in Obj\Big(\mathscr{F}un(\mathscr{C}, \mathscr{D})\Big)$, the homomorphism space $\text{Hom}_{\mathscr{F}un(\mathscr{C}, \mathscr{D})}(F, F')$ is equipped with the following norm $||\eta||:=\sup\{||\eta_U||\ |\ U\in Obj(\mathscr{C})\}\mbox{,}$ for all $\eta\in \text{Hom}_{\mathscr{F}un(\mathscr{C}, \mathscr{D})}(F, F')$ and all $U\in Obj(\mathscr{C})$.
			
			In particular, if $\mathscr{C}=\mathscr{D}=:\mathscr{M}$, then we write $\mathscr{E}nd(\mathscr{M}):=\mathscr{F}un(\mathscr{M}, \mathscr{M})$. In this case, it is a C$^*$-tensor category with tensor product $\otimes$ given by the composition of functors, the identity functor being the unit object. The corresponding rigid subcategory consists of adjointable functors whose unit and co-unit maps are uniformly bounded.
		\end{enumerate}
	\end{exs}
	
	The following are relevant examples of module C$^*$-categories, which are used in Section \ref{sec.StabilityTorsionFreeness}.
	\begin{exs}\label{exs.ModuleCategories}
		\begin{enumerate}
			\item If $\mathscr{C}$ is a C$^*$-tensor category, then it is a $\mathscr{C}$-bimodule C$^*$category with left and right actions given simply by its own tensor product $\underset{l}{\bullet}:=\otimes=:\underset{r}{\bullet}$.
			\item\label{ex.ModCatQG} Let $\mathscr{C}$ and $\mathscr{D}$ be C$^*$-tensor categories. If $J:\mathscr{C}\longrightarrow \mathscr{D}$ is a C$^*$-tensor functor, then $\mathscr{D}$ is a right $\mathscr{C}$-module C$^*$-category with the following action $P\bullet U:=P\otimes J(U)\mbox{,}$ for all objects $U\in\mathscr{C}$ and $P\in\mathscr{D}$ and analogously defined on homomorphisms. In particular, let $\mathbb{G}$ and $\mathbb{H}$ be compact quantum groups.
			\begin{enumerate}[a)]
				\item If $\mathbb{H}\leq \mathbb{G}$ with canonical surjection $\rho: C_m(\mathbb{G})\twoheadrightarrow C_m(\mathbb{H})$, then we have a restriction functor between C$^*$-tensor categories, $J:\mathscr{Rep}(\mathbb{G})\longrightarrow\mathscr{Rep}(\mathbb{H})$. In this way, $\mathscr{Rep}(\mathbb{H})$ is a right $\mathscr{Rep}(\mathbb{G})$-module C$^*$-category.
				\item If $\widehat{\mathbb{H}}\leq \widehat{\mathbb{G}}$, then we have a fully faithful functor between C$^*$-tensor categories, $J:\mathscr{Rep}(\mathbb{H})\longrightarrow\mathscr{Rep}(\mathbb{G})$ given by the natural inclusion of $\mathscr{Rep}(\mathbb{H})$ inside $\mathscr{Rep}(\mathbb{G})$ as a full subcategory. In this way, $\mathscr{Rep}(\mathbb{G})$ is a right $\mathscr{Rep}(\mathbb{H})$-module C$^*$-category.
			\end{enumerate}
			\item Let $\mathscr{C}$ is a C$^*$-tensor category. If $\mathscr{M}$ and $\mathscr{N}$ are a left $\mathscr{C}$-module C$^*$categories, then we denote by $\mathscr{F}un_{\mathscr{C}}(\mathscr{M}, \mathscr{N})$ the category whose objects are the $\mathscr{C}$-module functors between $\mathscr{M}$ and $\mathscr{N}$ and whose homomorphisms are the $\mathscr{C}$-module natural transformations, which are uniformly bounded. Then \\$\mathscr{F}un_{\mathscr{C}}(\mathscr{M}, \mathscr{N})$ is a left $\mathscr{C}$-module C$^*$-category in the following way. The involution $*:\mathscr{F}un_{\mathscr{C}}(\mathscr{M}, \mathscr{N})\longrightarrow\mathscr{F}un_{\mathscr{C}}(\mathscr{M}, \mathscr{N})$ given by the identity on objects and $(\eta^*)_X:=(\eta_X)^*$, for all $\eta\in \text{Hom}_{\mathscr{F}un_{\mathscr{C}}(\mathscr{M}, \mathscr{N})}(F, F')$ with $F, F'\in Obj\Big(\mathscr{F}un_{\mathscr{C}}(\mathscr{M}, \mathscr{N})\Big)$ and all $X\in Obj(\mathscr{M})$. For every objects $F, F'\in Obj\Big(\mathscr{F}un_{\mathscr{C}}(\mathscr{M}, \mathscr{N})\Big)$, the homomorphism space $\text{Hom}_{\mathscr{F}un_{\mathscr{C}}(\mathscr{M}, \mathscr{N})}(F, F')$ is equipped with the following norm $||\eta||:=\sup\{||\eta_X||\ |\ X\in Obj(\mathscr{M})\}\mbox{,}$ for all $\eta\in \text{Hom}_{\mathscr{F}un_{\mathscr{C}}(\mathscr{M}, \mathscr{N})}(F, F')$ and all $X\in Obj(\mathscr{M})$. The left action of $\mathscr{C}$ on $\mathscr{F}un_{\mathscr{C}}(\mathscr{M}, \mathscr{N})$ is given by $U\bullet F\ (X):=U\bullet\ F(X)\mbox{,}$ for all $F\in Obj(\mathscr{F}un_{\mathscr{C}}(\mathscr{M}, \mathscr{N}))$, all $U\in Obj(\mathscr{C})$ and all $X\in Obj(\mathscr{M})$; and analogously defined on homomorphisms. Both the associativity constraint and the unit constraint with respect to $\bullet$ are inherited from those on $\mathscr{N}$.
				
			In particular, we write $\mathscr{M}^*:=\mathscr{F}un_{\mathscr{C}}(\mathscr{M}, \mathscr{C})$ and it is called the \emph{dual category of $\mathscr{M}$}.			
		\end{enumerate}
	\end{exs}

	\subsection{Fusion rings and (\emph{strong}) torsion-freeness}\label{sec.Torsion}
		\begin{defi}
			Let $\mathbb{G}=(C(\mathbb{G}),\Delta)$ be a compact quantum group. A right $\mathbb{G}$-C$^*$-algebra is a C$^*$-algebra $A$ together with an injective non-degenerate $*$-homomorphism $\delta: A\longrightarrow M(A\otimes C(\mathbb{G}))$ such that: $i)$ $(\delta\otimes id_{C(\mathbb{G})})\circ\delta = (id_A\otimes \Delta)\circ\delta$ and $ii)$ $[\delta(A)(1\otimes C(\mathbb{G}))]=A\otimes C(\mathbb{G})$.
		Such homomorphism is called a \emph{right action of $\mathbb{G}$ on $A$} or a \emph{right co-action of $C(\mathbb{G})$ on $A$}.
		
		 In particular, if $A$ is finite dimensional and $\delta$ is ergodic, meaning that $A^\delta:=\{a\in A\ |\ \delta(a)=a\otimes 1_{C(\mathbb{G})}\}=\mathbb{C}1_A$, then we say that $(A,\delta)$ is a torsion action of $\mathbb{G}$. The set of all equivariant Morita equivalence classes of torsion actions of $\mathbb{G}$ is denoted by $\text{Tor}(\widehat{\mathbb{G}})$.
	\end{defi}
	\begin{note}
		Firstly, similar definitions can be made for \emph{left} actions of $\mathbb{G}$. In the present paper we will consider all actions to be right ones unless the contrary is explicitly indicated, so we will refer to them simply as \emph{action of $\mathbb{G}$}. Similar considerations are made for the following definitions. Secondly, since we are mainly interested in studying torsion actions, we implicitly assume that our C$^*$-algebras are unital unless the contrary is explicitly indicated.
	\end{note}
	\begin{rem}
		Given a $\mathbb{G}$-C$^*$-algebra $(A, \delta)$, there always exists a non-degenerate $\delta$-invariant conditional expectation $E_\delta:A\longrightarrow A^\delta$ given by $a\mapsto (id\otimes h_\mathbb{G})\delta(a)$ for all $a\in A$. $E_\delta$ is a state whenever $\delta$ is ergodic. Recall that we only work with the reduced form of $\mathbb{G}$, so $E_\delta$ is automatically faithful.
	\end{rem}
	\begin{defi}
		Let $\mathbb{G}=(C(\mathbb{G}),\Delta)$ be a compact quantum group and $\mathcal{A}$ a unital $*$-algebra. A right co-action of $\text{Pol}(\mathbb{G})$ on $\mathcal{A}$ is an algebra homomorphism $\delta:\mathcal{A}\longrightarrow \mathcal{A} \odot \text{Pol}(\mathbb{G})$ such that: $i)$ $(\delta\otimes id_{\text{Pol}(\mathbb{G})})\circ\delta = (id_\mathcal{A}\otimes \Delta)\circ\delta$ and $ii)$ $(id_{\mathcal{A}}\otimes \varepsilon_{\mathbb{G}})\circ \delta=id_{\mathcal{A}}$.
	\end{defi}
	\begin{defi}
		Let $\mathbb{G}=(C(\mathbb{G}),\Delta)$ be a compact quantum group. An algebraic right $\mathbb{G}$-$*$-algebra is a unital $*$-algebra $\mathcal{A}$ together with a right co-action of $\text{Pol}(\mathbb{G})$ on $\mathcal{A}$, $\delta:\mathcal{A}\longrightarrow \mathcal{A} \odot \text{Pol}(\mathbb{G})$, such that $\mathcal{A}^\delta$ is a unital C$^*$-algebra, and the canonical $\delta$-invariant conditional expectation $E_\delta:\mathcal{A}\longrightarrow \mathcal{A}^\delta$ is completely positive. Such co-action is called a \emph{(right) algebraic action of $\mathbb{G}$ on $\mathcal{A}$}.
	\end{defi}
	
	\begin{theo}[{\cite[Proposition 4.5]{KennyMakoto}}]\label{theo.AlgebraicActionsG}
		Let $\mathbb{G}=(C(\mathbb{G}),\Delta)$ be a compact quantum group.
		\begin{enumerate}[i)]
			\item If $(A,\delta)$ is a unital right $\mathbb{G}$-C$^*$-algebra, then $\mathcal{A}:=span\{(id_A\otimes h_\mathbb{G})\big(\delta(a)(1_A\otimes x)\big)\ |\ a\in A, x\in \text{Pol}(\mathbb{G})\}$ is a dense unital $*$-subalgebra of $A$ on which $\delta$ restricts to an algebraic action of $\mathbb{G}$. This association defines a functor from the category of $\mathbb{G}$-C$^*$-algebras to the one of algebraic $\mathbb{G}$-$*$-algebras. We denote it by $\textbf{Alg}$. 
			\item If $(\mathcal{A}, \delta)$ is an algebraic right $\mathbb{G}$-$*$-algebra, then there exists a unique C$^*$-completion of $\mathcal{A}$, say $A$, to which $\delta$ extends as a right action of $\mathbb{G}$. Moreover, $\mathcal{A}^\delta=A^\delta$. This association defines a functor from the category of algebraic $\mathbb{G}$-$*$-algebras and the one of $\mathbb{G}$-C$^*$-algebras. We denote it by $\textbf{Comp}$.
		\end{enumerate}
		
		In addition, the composition $\textbf{Comp}\circ \textbf{Alg}$ is naturally equivalent to the identity functor. The composition $\textbf{Alg}\circ \textbf{Comp}$ is naturally equivalent to the identity whenever we restrict to actions with finite dimensional fixed points algebras.
	\end{theo}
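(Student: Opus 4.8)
The plan is to handle the two directions of the correspondence separately and then read off the two naturality statements. Direction (i) amounts to restricting a $\mathbb{G}$-$C^*$-algebra to its \emph{spectral} (Podle\'s) subalgebra, which is a Peter--Weyl argument; direction (ii), the substantive one, requires manufacturing a $C^*$-completion of an algebraic action, and the natural device is to represent $\mathcal A$ faithfully by bounded adjointable operators on a Hilbert module over its fixed-point algebra. For direction (i): given a unital $\mathbb{G}$-$C^*$-algebra $(A,\delta)$, for each $x\in Irr(\mathbb{G})$ let $P_x\colon A\to A$ be the Peter--Weyl spectral projection onto the $x$-isotypic component of $\delta$, built from $h_\mathbb{G}$ and the matrix coefficients $u^x_{ij}$, which is bounded. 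One checks that $\mathcal A:=\bigoplus_{x\in Irr(\mathbb{G})}P_x(A)$ coincides with the span defining $\textbf{Alg}(A,\delta)$, is dense in $A$, and is a unital $*$-subalgebra, using $P_x(A)^*\subseteq P_{\overline x}(A)$ and $P_x(A)\,P_y(A)\subseteq\bigoplus_{z\subset x\otop y}P_z(A)$. Since $\delta(P_x(A))\subseteq P_x(A)\odot span\{u^x_{ij}\}$, the restriction $\delta|_{\mathcal A}$ takes values in $\mathcal A\odot Pol(\mathbb{G})$, intertwines $\Delta$, and is counital because $(id\otimes\varepsilon_\mathbb{G})\circ\delta=id$ already on $A$. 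Finally $\mathcal A^{\delta}=P_\epsilon(A)=A^{\delta}$ is norm-closed, hence a unital $C^*$-algebra, and $\mathbb{E}_\delta=(id\otimes h_\mathbb{G})\circ\delta$ restricts to a completely positive conditional expectation $\mathcal A\to\mathcal A^{\delta}$, being the restriction of the completely positive map $A\to A^{\delta}$. Functoriality of $\textbf{Alg}$ is immediate since an equivariant $*$-homomorphism commutes with the $P_x$.

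For direction (ii): let $(\mathcal A,\delta)$ be an algebraic $\mathbb{G}$-$*$-algebra, set $B:=\mathcal A^{\delta}$, and let $\mathbb{E}\colon\mathcal A\to B$ be the given completely positive conditional expectation. Regard $\mathcal A$ as a right $B$-module and equip it with the $B$-valued pairing $\langle a,b\rangle:=\mathbb{E}(a^*b)$. The decisive structural input, obtained by Frobenius reciprocity in the rigid category $\mathscr{R}ep(\mathbb{G})$, is that each spectral subspace $\mathcal A_x$ is a finitely generated projective right $B$-module: pairing $\mathcal A_x$ with $\mathcal A_{\overline x}$ through the multiplication $\mathcal A_x\underset{B}{\odot}\mathcal A_{\overline x}\to\mathcal A_\epsilon=B$ yields a dual basis. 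From this one deduces that $\langle\cdot,\cdot\rangle$ is positive, hence positive-definite after dividing out its radical, and that left multiplication $\lambda(a)$ by any $a\in\mathcal A$ is a bounded adjointable operator on the Hilbert $B$-module completion $\mathcal H$ of $\mathcal A$, with $\lambda(a)^*=\lambda(a^*)$. Then $A:=\overline{\lambda(\mathcal A)}\subseteq\mathcal L_B(\mathcal H)$ is a $C^*$-algebra containing $\mathcal A$ as a dense $*$-subalgebra, $\delta$ extends to a right action on $A$ by continuity, and one checks $A^{\delta}=B=\mathcal A^{\delta}$. Functoriality of $\textbf{Comp}$ follows because an equivariant $*$-homomorphism $\mathcal A\to\mathcal A'$ intertwines the conditional expectations, hence is contractive for the two $C^*$-norms and extends to the completions.

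For uniqueness of the completion and the two compositions: any $C^*$-completion $(A',\delta')$ of $(\mathcal A,\delta)$ carries the completely positive conditional expectation $(id\otimes h_\mathbb{G})\circ\delta'$ onto the closure of $B$, extending $\mathbb{E}$; representing $A'$ on the associated Hilbert $B$-module reproduces $\lambda$, so the identity of $\mathcal A$ is isometric for the two $C^*$-norms and $A'\cong A$. For $\textbf{Comp}\circ\textbf{Alg}$, starting from $(A,\delta)$ the subalgebra $\textbf{Alg}(A,\delta)$ is dense in $A$ and the inclusion already exhibits $A$ as a $C^*$-completion to which $\delta$ extends, so by uniqueness $\textbf{Comp}(\textbf{Alg}(A,\delta))\cong(A,\delta)$, naturally so by the functoriality above. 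For $\textbf{Alg}\circ\textbf{Comp}$ restricted to actions whose fixed-point algebra $B$ is finite-dimensional, each $\mathcal A_x$ is finitely generated projective over $B$, hence finite-dimensional, hence closed in $A=\textbf{Comp}(\mathcal A)$; thus the $x$-isotypic component of $A$ equals $P_x(A)=\overline{P_x(\mathcal A)}=\overline{\mathcal A_x}=\mathcal A_x$, and summing over $x\in Irr(\mathbb{G})$ gives $\textbf{Alg}(\textbf{Comp}(\mathcal A))=\mathcal A$. This finiteness hypothesis is genuinely needed, as for infinite-dimensional $B$ the completion $P_x(A)$ of $\mathcal A_x$ may be strictly larger.

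I expect the crux to be direction (ii): both the positive-definiteness of the $B$-valued inner product and the boundedness of the left regular representation rest on the spectral subspaces being finitely generated projective over the fixed-point algebra --- the one place where the rigidity of $\mathscr{R}ep(\mathbb{G})$, i.e.\ the conjugate equations and Frobenius reciprocity, genuinely enters --- while the uniqueness argument additionally needs the extended conditional expectation to remain faithful, so that the representation on $\mathcal H$ really recovers the enveloping $C^*$-norm.
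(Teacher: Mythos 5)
The paper does not actually prove this statement: Theorem \ref{theo.AlgebraicActionsG} is quoted from De Commer--Yamashita \cite{KennyMakoto}, so there is no in-paper argument to compare against. That said, your proposal reconstructs the standard proof correctly and locates the crux in the right place: part (i) is the Podle\'s/Peter--Weyl spectral decomposition, and in part (ii) the finitely-generated-projectivity of the spectral subspaces over $B=\mathcal{A}^\delta$ (via Frobenius reciprocity) is exactly what makes the left regular representation on the Hilbert $B$-module completion bounded, and faithfulness of $\mathbb{E}_{\delta'}$ on any candidate completion (which follows from the paper's standing assumption that $h_\mathbb{G}$ is faithful, since $\delta'$ is required to be injective and $id\otimes h_\mathbb{G}$ is then faithful on $A'\otimes C(\mathbb{G})$) is exactly what forces uniqueness of the $C^*$-norm. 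A few steps are glossed over but are standard: you should check that $\mathbb{E}$ is already faithful on $\mathcal{A}$ itself (so that $\mathcal{A}$ embeds into, rather than merely maps to, its completion; this uses the invertibility of the matrices $\big(h_\mathbb{G}(u^x_{ij}(u^x_{kl})^*)\big)$ on each spectral subspace), that the extension of $\delta$ to $A$ is bounded, injective and satisfies the Podle\'s density condition (the usual device is the isometry $\xi\otimes y\mapsto\delta(\xi)(1\otimes y)$ on $\mathcal{H}\odot Pol(\mathbb{G})$ plus the antipode trick, as in Remark \ref{rem.InducedActionCompacts}), and in part (i) the counit identity should be verified on $\mathcal{A}$ directly from the definition of $Mor(x,\delta)$ rather than asserted ``already on $A$'', since $\varepsilon_\mathbb{G}$ need not be bounded on the reduced $C(\mathbb{G})$. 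None of these affects the architecture of the argument.
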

	
	\begin{exs}\label{ex.TorsionActions}
		\begin{enumerate}
			\item The trivial action $(\mathbb{C}, trv.)$ is of course a torsion action of any compact quantum group $\mathbb{G}$. The co-multiplication of any compact quantum group $\mathbb{G}$ defines an action of $\mathbb{G}$ on its defining C$^*$-algebra. This action is called the \emph{regular action of $\mathbb{G}$}.
			\item If $\widehat{\mathbb{H}}<\widehat{\mathbb{G}}$ is a discrete quantum subgroup of $\widehat{\mathbb{G}}$, we have by definition an inclusion of C$^*$-algebras $C(\mathbb{H})\overset{\iota}{\subset} C(\mathbb{G})$ intertwining the corresponding co-multiplications. Therefore, if $(B,\beta)$ is a $\mathbb{H}$-C$^*$-algebra, we can obviously extend $\beta$ (by composing with $\iota$) into an action of $\mathbb{G}$ on $B$, which is denoted by $\widetilde{\beta}$. We denote by $\text{Ind}^{\mathbb{G}}_{\mathbb{H}}(B)$ the same C$^*$-algebra $B$ but equipped with the composition $\widetilde{\beta}:=(id_B\otimes\iota)\circ\beta$ as an action of $\mathbb{G}$. Observe that if $(B,\beta)$ is a torsion action of $\mathbb{H}$, then $(\text{Ind}^{\mathbb{G}}_{\mathbb{H}}(B), \widetilde{\beta})$ is a torsion action of $\mathbb{G}$. In particular, if $\widehat{\mathbb{H}}$ is finite, then $(C(\mathbb{H}), \Delta_{\mathbb{H}})$ defines a non-trivial torsion action of $\mathbb{G}$.
			\item If $\widehat{\mathbb{H}}<\widehat{\mathbb{G}}$ is a discrete quantum subgroup of $\widehat{\mathbb{G}}$, we have by definition an inclusion of C$^*$-algebras $C(\mathbb{H})\overset{\iota}{\subset} C(\mathbb{G})$ intertwining the corresponding co-multiplications. Given a $\mathbb{G}$-C$^*$-algebra $(A, \delta)$ we denote by $\text{Res}^\mathbb{G}_{\mathbb{H}}(A)$ the C$^*$-algebra obtained as $\mathcal{A}':=\overline{span}\{(id_A\otimes h_\mathbb{G})\big(\delta(a)(1_A\otimes y)\big)\ |\ a\in A, y\in \text{Pol}(\mathbb{H})\}$. As in Theorem \ref{theo.AlgebraicActionsG}, $\delta$ ``restricts'' to an action of $\mathbb{H}$ on $\text{Res}^\mathbb{G}_{\mathbb{H}}(A)$, which we denote by $\delta'$. We will show in Proposition \ref{pro.CorrespondenceTorsionActions} that if $(A, \delta)$ is a torsion action of $\mathbb{G}$, then $(\text{Res}^\mathbb{G}_{\mathbb{H}}(A), \delta')$ splits as a direct sum of actions of $\mathbb{H}$ which are $\mathbb{H}$-Morita equivalent to torsion actions of $\mathbb{H}$.
			\item If $u\in \mathcal{B}(H_u)\otimes C(\mathbb{G})$ is a unitary representation of $\mathbb{G}$ on a finite dimensional Hilbert space $H_u$, then it defines an action of $\mathbb{G}$ on $\mathcal{B}(H_u)$ given by $Ad_u:\mathcal{B}(H_u) \longrightarrow \mathcal{B}(H_u)\otimes C(\mathbb{G})$, $T \longmapsto Ad_u(T):=u(T\otimes 1_{C(\mathbb{G})})u^*$. It is clear that $\mathcal{B}(H)^{Ad_u}=End(u)$. Hence, the pair $(\mathcal{B}(H_u), Ad_u)$ is a torsion action of $\mathbb{G}$ if and only if $u$ is irreducible.
		\end{enumerate}
	\end{exs}
	
	\begin{defi}\label{defi.TorsionFree}
		Let $\mathbb{G}$ be a compact quantum group. We say that $\widehat{\mathbb{G}}$ is torsion-free if any torsion action of $\mathbb{G}$ is $\mathbb{G}$-equivariantly Morita equivalent to the trivial $\mathbb{G}$-C$^*$-algebra $\mathbb{C}$.
	\end{defi}

	The notion of equivariant Hilbert module with respect to a compact quantum group is central for our purpose.
	\begin{defi}\label{defi.EquivariantModule}
		Let $\mathbb{G}$ be a compact quantum group and $(A, \delta)$ a $\mathbb{G}$-C$^*$-algebra. A right $\mathbb{G}$-equivariant Hilbert $A$-module is a right $A$-module $E$ together with an injective linear map $\delta_E: E\longrightarrow E\otimes C(\mathbb{G})$ such that: $i)$ $\delta_E(\xi\cdot a)=\delta_E(\xi)\circ\delta(a)$ for all $\xi\in E$ and $a\in A$; $ii)$ $\delta\big(\langle \xi, \eta\rangle\big)=\langle \delta_E(\xi), \delta_E(\eta) \rangle$ for all $\xi, \eta\in E$; $iii)$ $(\delta_E\otimes id)\circ \delta_E=(id\otimes \Delta)\circ \delta_E$; $iv)$ $[\delta_E(E)(A\otimes C(\mathbb{G}))]=E\otimes C(\mathbb{G})$. Such map is called a \emph{right action of $\mathbb{G}$ on $E$} or a \emph{right co-action of $C(\mathbb{G})$ on $E$}.
	\end{defi}
	\begin{ex}\label{ex.ActHilbSpaceUnitRep}
		If $u\in \mathcal{B}(H_u)\otimes C(\mathbb{G})$ is a unitary representation of $\mathbb{G}$ on a finite dimensional Hilbert space $H_u$, then it defines an action of $\mathbb{G}$ on $H_u$ given by $\delta_u(\xi):=u(\xi\otimes id)$, for all $\xi\in H_u$.
	\end{ex}
	\begin{defi}\label{defi.EquivariantModule}
		Let $\mathbb{G}$ be a compact quantum group and $(A, \delta)$ a $\mathbb{G}$-C$^*$-algebra. Let $(E, \delta_E)$ be a $\mathbb{G}$-equivariant Hilbert $A$-module. We say that $E$ is irreducible if the space of equivariant adjointable operators of $E$, $\mathcal{L}_{\mathbb{G}}(E):=\{T\in\mathcal{L}_A(E)\ |\ \delta_E(T(\xi))=(T\otimes 1)\delta_E(\xi)\mbox{, for all $\xi\in E$}\},$ is one-dimensional.
	\end{defi}
	\begin{rem}\label{rem.AdmissibleUnitary}
		If $(E, \delta_E)$ is a $\mathbb{G}$-equivariant Hilbert $A$-module as above, then $\mathcal{K}_A(E)$ is a $\mathbb{G}$-C$^*$-algebra with action $\delta_{\mathcal{K}_A(E)}$ defined by $\delta_{\mathcal{K}_A(E)}(\theta_{\xi, \eta})=\delta_E(\xi)\delta_E(\eta)^*\in \mathcal{K}_A(E)\otimes C(\mathbb{G})$, for all $\xi, \eta\in E$ where $\theta_{\xi, \eta}$ denotes the corresponding rank one operator in $E$. By abuse of notation, we still denote by $\delta_{\mathcal{K}_A(E)}$ the extension of this homomorphism to $\mathcal{L}_A(E) = M(\mathcal{K}_A(E)) \rightarrow M(\mathcal{K}_{A}(E) \otimes C(\mathbb{G}))$. The latter is however not in general an action of $\mathbb{G}$ on $\mathcal{L}_A(E)$. Recall further that giving an action $\delta_E$ is equivalent to giving a unitary operator $V_E\in\mathcal{L}_{A\otimes C(\mathbb{G})}\big(E\underset{\delta}{\otimes}(A\otimes C(\mathbb{G})), E\otimes C(\mathbb{G})\big)$ such that $\delta_E(\xi)=V_E\circ T_\xi$ for all $\xi\in E$ where $T_\xi\in\mathcal{L}_{A\otimes C(\mathbb{G})}(A\otimes C(\mathbb{G}), E\underset{\delta}{\otimes}(A\otimes C(\mathbb{G})))$ is such that $T_\xi(x)=\xi\underset{\delta}{\otimes} x$, for all $x\in A\otimes C(\mathbb{G})$. One calls $V_E$ the \emph{admissible operator for $(E, \delta_E)$}. Moreover, we have $\delta_{\mathcal{K}_A(E)}=Ad_{V_E}$. We refer to \cite{BaajSkandalisQuantumKK} for more details. Note that $\mathcal{L}_{\mathbb{G}}(E)=\mathcal{L}_A(E)^{Ad_{V_E}}$. So, if $(E, \delta_E)$ is irreducible, then $\mathcal{L}_A(E)=\mathcal{K}_A(E)$ together with $Ad_{V_E}$ defines an ergodic action of $\mathbb{G}$.
	\end{rem}

	Y. Arano and K. De Commer \cite{YukiKenny} have re-interpreted the notion of torsion-freeness for discrete quantum groups in terms of fusion rings giving a stronger version of torsion-freeness. Let us recall briefly elementary notions about fusion rings theory (we refer to \cite{YukiKenny} or \cite{Gelaki} for further details and properties).
		
	Let $(I, \mathbbm{1})$ be an involutive pointed set, i.e. a set $I$ equipped with an involution $I\rightarrow I$, $i\mapsto \overline{i}$, such that $\mathbbm{1}=\overline{\mathbbm{1}}$. Let $J$ be any set. Denote by $\mathbb{Z}_I$ the \emph{free $\mathbb{Z}$-module with basis $I$}, that is, every element in $\mathbb{Z}_I$ is a unique finite $\mathbb{Z}$-linear combination of elements of $I$. The addition operation in $\mathbb{Z}_I$ is denoted by $\oplus$. One extends the involution on $I$ to a $\mathbb{Z}$-linear involution on $\mathbb{Z}_I$. A \emph{ring structure $\otimes$} on $\mathbb{Z}_I$ is given by constants $N_{\alpha, i'}^{i}\in \mathbb{N}\cup \{0\}$ for all $\alpha, i, i'\in I$, called \emph{fusion rules}, such that $\alpha\otimes i'=\underset{i\in I}{\sum}N_{\alpha, i'}^{i}\cdot i\mbox{,}$ where all but finitely many terms vanish. This rule extends obviously to any element of $\mathbb{Z}_I$ and it can be regarded as an action of $\mathbb{Z}_I$ on itself; we call it \emph{regular action of $\mathbb{Z}_I$}. We write $i\subset \alpha\otimes i'$ whenever $N_{\alpha, i'}^{i}\neq 0$. Denote by $\mathbb{Z}_J$ the \emph{free $\mathbb{Z}$-module with basis $J$}. The addition operation in $\mathbb{Z}_J$ is still denoted by $\oplus$. A \emph{(left) $\mathbb{Z}_I$-module structure} on $\mathbb{Z}_J$, still denoted by $\otimes$, is given by constants $N^{j}_{\alpha, j'}\in\mathbb{N}\cup \{0\}$ for all $\alpha\in I$, $j, j'\in J$ such that $\alpha\otimes j'=\underset{j\in J}{\sum}N_{\alpha, j'}^{j}\cdot j\mbox{,}$ where all but finitely many terms vanish. This rule extends obviously to any element of $\mathbb{Z}_I$ and $\mathbb{Z}_J$ and it can be regarded as an action of $\mathbb{Z}_I$ on $\mathbb{Z}_j$; we say that $\mathbb{Z}_J$ is a $\mathbb{Z}_I$-module. We write $j\subset \alpha\otimes j'$ whenever $N_{\alpha, j'}^{j}\neq 0$.

	\begin{defi}\label{defi.FusionModules}
		Let $(I, \mathbbm{1})$ be an involutive pointed set and $J$ any set. Let $R:=(\mathbb{Z}_I, \oplus, \otimes)$ be the free $\mathbb{Z}$-module with basis $I$ endowed with a ring structure and let $M:=(\mathbb{Z}_J, \oplus, \otimes)$ be the free $\mathbb{Z}$-module with basis $J$ endowed with a $\mathbb{Z}_I$-module structure.
		\begin{itemize}
			\item[-] We say that $R$ is a $I$-based ring if $\overline{\alpha\otimes\alpha'}=\overline{\alpha'}\otimes\overline{\alpha}$, for all $\alpha, \alpha'\in I$ and $\mathbbold{1}\subset \overline{\alpha}\otimes \alpha'$ if and only if $\alpha=\alpha'$, for all $\alpha, \alpha'\in I$.
			\item[-] We say that $R$ is a fusion ring if it is a $I$-based ring equipped with a dimension function, that is, a unital ring homomorphism $d:\mathbb{Z}_I\longrightarrow\mathbb{R}$ such that $d(\alpha)>0$, for all $\alpha\in I$ and $d(\overline{\alpha})=d(\alpha)$, for all $\alpha\in I$.
			\item[-] We say that $M$ is a $J$-based module if $j\subset \alpha\otimes j' \Leftrightarrow j'\subset \overline{\alpha}\otimes j$, for all $\alpha\in I$, $j, j'\in J$.
			\item[-] A $J$-based module $M$ is said to be co-finite if for all $j, j'\in J$, the set $\{\alpha\in I\ |\ j\subset\alpha\otimes j' \}$ is finite.
			\item[-] A $J$-based module $M$ is said to be connected if for all $j, j'\in J$, there exists $\alpha\in I$ such that $j\subset \alpha\otimes j'$.
			\item[-] A $J$-based module is said to be a torsion module if it is co-finite and connected.
			\item[-] We say that $M$ is a fusion $R$-module if it is $J$-based and it is equipped with a compatible dimension function, that is, a linear map $d_J:\mathbb{Z}_J\longrightarrow\mathbb{R}$ such that $d_J(j)>0$, for all $j\in J$ and $d_J(\alpha\otimes j')=d_I(\alpha)d_J(j')$, for all $\alpha\in I$ and all $j'\in J$.
		\end{itemize}
	\end{defi}
	\begin{note}
		An isomorphism of based modules is assumed to take basis elements to basis elements.
	\end{note}
	\begin{rem}\label{rem.BilinearForm}
		We keep the terminology and notations from the previous definition. We refer to \cite{YukiKenny} for further details about the following observations, which will be useful for later computations. If $R$ is a fusion ring with dimension function $d$ and $M$ is a $J$-based $R$-module, then we can define an $R$-valued $\mathbb{Z}$-bilinear form on $M$ by $\langle j, j'\rangle := \underset{i\in I}{\sum} N^{j'}_{\overline{i}, j}\cdot i\mbox{,}$ for all $j, j'\in J$. Notice that $\langle \alpha\otimes j, j'\rangle=\alpha\otimes \langle j, j'\rangle$ and $\langle j, j'\rangle=\overline{\langle j', j \rangle}$, for any $\alpha\in I$, $j, j'\in J$. Then for any $j_0\in J$, the map $d_J:M\longrightarrow \mathbb{R}$ defined by $d_J(j):=d(\langle j, j_0\rangle)$, for all $j\in J$ is a compatible dimension function on $M$. In particular, for $M=R$ one has $\langle \alpha, \alpha' \rangle = \alpha\otimes \overline{\alpha'}$, for any $\alpha, \alpha'\in I$.
		
		Observe that if we equip $R$ with the $\mathbb{Z}$-linear functional $\tau$ (sometimes denotes by $\tau_R$ to specify the ring on which one considers this functional) such that $\tau(\alpha)=\delta_{\alpha, \mathbbold{1}}$ for all $\alpha \in I$, then one can check that $R$ is a $I$-based ring if and only if $\overline{\alpha\otimes\alpha'}=\overline{\alpha'}\otimes\overline{\alpha}$, $\tau(\overline{\alpha}\otimes \alpha')=\delta_{\alpha, \alpha'}$ and $\tau(\alpha \otimes \beta\otimes \gamma)\in\mathbb{N}$, for all $\alpha, \alpha', \beta, \gamma\in I$. In this case, the structural constants of $R$ are recovered as $N_{\alpha, i'}^{i}=\tau(\alpha \otimes i'\otimes \overline{i})$, for all $\alpha, i, i'\in I$. Moreover, for a $J$-based $R$-module $M$ one has $\tau(\langle j, j'\rangle)=\delta_{j, j'}$, for all $j, j'\in J$ and the structural constants of $M$ are recovered as $N_{\alpha, j'}^{j}=\tau(\alpha\otimes \langle j', j\rangle)$, for all $\alpha\in I$, $j, j'\in J$. In particular, $N^{j'}_{\mathbbold{1}_R, j}=\delta_{j,j'}$.
		
		More generally, one can check that $M$ is a $J$-based $R$-module if and only if there exists a $R$-valued $\mathbb{Z}$-bilinear form on $M$, say $\langle \cdot, \cdot \rangle$, such that $\langle \alpha\otimes j, j'\rangle=\alpha\otimes \langle j, j'\rangle$, $\langle j, j'\rangle=\overline{\langle j', j \rangle}$, $\tau(\langle j, j'\rangle)=\delta_{j, j'}$ and $\tau(\alpha\otimes \langle j, j'\rangle)\in\mathbb{N}$, for all $\alpha\in I$ and all $j, j'\in J$.
	\end{rem}
	\begin{defi}
		Let $R$ be a $I$-based ring and $M$ a $J$-based $R$-module. The stabiliser of an element $j\in J$ is defined by $\text{Stab}(j):=\{\alpha\in I\ |\ j\subset \alpha\otimes j\}$.
	\end{defi}
	
	\begin{exs}
		\begin{enumerate}
			\item The trivial fusion ring is the fusion ring $\mathbb{Z}_I$ with $I=\{\mathbbold{1}\}$.
			\item Any fusion ring $R$ is a fusion $R$-module with its regular action. It is automatically co-finite and connected by definition. The $R$-valued bilinear form on $R$ as introduced in Remark \ref{rem.BilinearForm} is given by $\langle \alpha, \alpha'\rangle =\alpha\otimes \overline{\alpha'}\mbox{,}$ for all $\alpha, \alpha'\in I$. In this way, we say that $R$ is equipped with the \emph{standard fusion module structure}.
				
				A fusion $R$-module is said to be \emph{standard} if it is isomorphic to $R$ with its standard fusion module structure.
			\item Let $R:=(\mathbb{Z}_I, \oplus, \otimes, d)$ be a fusion ring. If $L\subset I$ is subset such that $(L, \mathbbold{1})$ is an involutive pointed set such that $N^{i}_{\alpha, i'}=0$, for all $\alpha, i'\in L$ and all $i\in I\backslash L$, then we obtain by restriction of $\otimes$ and $d$ a fusion ring $S:=(\mathbb{Z}_L, \oplus, \otimes_{|}, d_{|})$. It is called \emph{fusion subring of $R$} and we write $S\subset R$.
				
				For instance, given any basis element $\alpha\in I$ we can consider the \emph{fusion ring generated by $\alpha$}, which is the smallest fusion subring of $R$ containing $\alpha$.
			\item Let $R_1:=(\mathbb{Z}_{I_1}, \oplus, \otimes, d_1)$ and $R_2:=(\mathbb{Z}_{I_2}, \oplus, \otimes, d_2)$ be two fusion rings. We define the tensor product of $R_1$ and $R_2$, denoted by $R_1\otimes R_2$, as the free $\mathbb{Z}$-module $\mathbb{Z}_{I_1}\underset{\mathbb{Z}}{\odot} \mathbb{Z}_{I_2}$. It is a fusion ring with basis $I_{1}\underset{\mathbb{Z}}{\odot}I _{2}$, unit $\mathbbold{1}_1\odot \mathbbold{1}_2$, multiplication such that $(i_1\odot i_2)\otimes(i'_1\odot i'_2)=(i_1\otimes i'_1)\odot (i_2\otimes i'_2)$, for all $i_1, i'_1\in I_1$, $i_2, i'_2\in I_2$; involution $\overline{x\odot y}=\overline{x}\odot \overline{y}$, for all $x\in\mathbb{Z}_{I_1}$ and all $y\in\mathbb{Z}_{I_2}$; and dimension function $d(i_1\odot i_2)=d_1(i_1)d_2(i_2)$, for all $i_1\in I_1$ and all $i_2\in I_2$.
			\item Let $\widehat{\mathbb{G}}$ be a discrete quantum group. Define $(I,\mathbbold{1}):=(\text{Irr}(\mathbb{G}), \epsilon)$ as the pointed set with involution given by the adjoint representation. We define the \emph{fusion ring of $\widehat{\mathbb{G}}$} as the $\text{Irr}(\mathbb{G})$-based ring $\mathbb{Z}_{\text{Irr}(\mathbb{G})}$ with fusion rules and dimension function given by $N_{x, y}^{z}:=dim\Big(Mor(z, x\otimes y)\Big)\mbox{ and } d(x):=dim(H_x)\mbox{ ,}$ for all $x,y,z\in \text{Irr}(\mathbb{G})$. In other words, the ring structure is given simply by the tensor product of representations and so by the corresponding fusion rules. This ring is denoted by $\text{R}(\mathbb{G})$ and we refer to it as the \emph{representation ring of $\mathbb{G}$}. If $\widehat{\mathbb{H}}<\widehat{\mathbb{G}}$, then $\mathscr{Rep}(\mathbb{H})$ is a full subcategory of $\mathscr{Rep}(\mathbb{G})$, so that $\text{R}(\mathbb{H})$ is a fusion subring of $\text{R}(\mathbb{G})$.
			\item Let $R$ be a fusion ring and let $S\subset R$ be a fusion subring of $R$. By restriction, it is clear that $R$ can be viewed as a fusion $S$-module. If $N$ is a fusion $S$-module, the tensor product $M:=R\underset{S}{\odot} N$ is a $R$-module relevant for our purpose. We denote it by $\text{Ind}^{R}_{S}(N)$ and we call it \emph{induced $R$-module from $N$}. Notice that it is not clear a priori whether it is a \emph{based module} in the sense of the Definition \ref{defi.FusionModules}.
		\end{enumerate}
	\end{exs}
		
	\begin{defi}\label{defi.StrongTorsionFree}
		A fusion ring $R$ is called torsion-free if any non-trivial torsion $R$-module is standard. In particular, given a compact quantum group $\mathbb{G}$, we say that $\widehat{\mathbb{G}}$ is strong torsion-free if $\text{R}(\mathbb{G})$ is torsion-free.
	\end{defi}
	\begin{defi}
		Let $R$ be a fusion ring and let $S\subset R$ be a fusion subring. We say that $S$ is divisible in $R$ if $R\cong \underset{\Omega}{\bigoplus} S$ as (right) based $S$-modules. In particular, given two compact quantum groups $\mathbb{G}$ and $\mathbb{H}$ such that $\widehat{\mathbb{H}}<\widehat{\mathbb{G}}$, we say that $\widehat{\mathbb{H}}$ is divisible in $\widehat{\mathbb{G}}$ if $\text{R}(\mathbb{H})$ is divisible in $\text{R}(\mathbb{G})$.	
	\end{defi}
	\begin{rem}\label{rem.DivisibilityDecomp}
		Observe that given a basis element $i\in I$, it is decomposed as a sum of elements in $\underset{\Omega}{\bigoplus} L$ through the identification $R\cong \underset{\Omega}{\bigoplus} S$; but since the latter is an isomorphism of \emph{based} modules, the element $i\in I$ corresponds to a basis element in only one component of this direct sum, say $l_{i}\in L$ at position $t_i\in\Omega$. Let us denote by $\mathbbold{1}_t\in I\subset R$ the copy of $\mathbbold{1}_S$ at position $t\in\Omega$. Then one can write $i\cong \mathbbold{1}_{t_i}\otimes l_i$. In the decomposition $R\cong \underset{\Omega}{\bigoplus} S$ we choose an index $t_0\in \Omega$ such that the copy of $S$ at position $t_0$ is precisely the component $S\subset R$. In particular, $\mathbbold{1}=\mathbbold{1}_R=\mathbbold{1}_S=\mathbbold{1}_{t_0}$.
	\end{rem}
	\begin{rem}	
		In the particular case of compact quantum groups, we can give an alternative definition of divisibility. Namely, we define the following equivalence relation on $\text{Irr}(\mathbb{G})$: $x,y\in \text{Irr}(\mathbb{G})$, $x\sim y\Leftrightarrow$ there exists an irreducible representation $z\in \text{Irr}(\mathbb{H})$ such that $y\otimes \overline{x}\supset z$. We say that $\widehat{\mathbb{H}}$ is divisible in $\widehat{\mathbb{G}}$ if for each $\alpha\in \sim\backslash \text{Irr}(\mathbb{G})$ there exists a representation $l_{\alpha}\in \alpha$ such that $s\otimes l_{\alpha}$ is irreducible for all $s\in \text{Irr}(\mathbb{H})$ and $s\otimes l_\alpha\cong s'\otimes l_\alpha$ implies $s\cong s'$, for all $s,s'\in \text{Irr}(\mathbb{H})$. This is equivalent to say that for each $\alpha\in \text{Irr}(\mathbb{G})/\sim$ there exists a representation $r_{\alpha}\in \alpha$ such that $r_{\alpha}\otimes s$ is irreducible for all $s\in \text{Irr}(\mathbb{H})$ and $r_\alpha\otimes s\cong r_\alpha\otimes s'$ implies $s\cong s'$, for all $s,s'\in \text{Irr}(\mathbb{H})$. This is again equivalent to say that there exists a $\widehat{\mathbb{H}}$-equivariant $*$-isomorphism $c_0(\widehat{\mathbb{G}})\cong c_0(\widehat{\mathbb{H}})\otimes c_0(\widehat{\mathbb{H}}\backslash\widehat{\mathbb{G}})$ (see \cite{VoigtBaumConnesUnitaryFree} for a proof). 
	\end{rem}
		
	The approach of Y. Arano and K. De Commer meets the notion of torsion-freeness in the sense of R. Meyer and R. Nest when we work in the context of module C$^*$-categories. Let us recall the main definitions and results that make possible this connection. We refer to \cite{Longo}, \cite{YukiKenny} and \cite{SergeyMakoto} for more precisions and details. Let $\mathscr{C}$ be a rigid C$^*$-tensor category and $\mathscr{M}$ a $\mathscr{C}$-module C$^*$-category. We associate to $\mathscr{C}$ a fusion ring, denoted by $\text{Fus}(\mathscr{C})$, with basis given by irreducible objects of $\mathscr{C}$, fusion rules analogous to the fusion rules of a discrete quantum group and dimension function defined in \cite{Longo} (called \emph{intrinsic dimension}). We associate to $\mathscr{M}$ a based $\text{Fus}(\mathscr{C})$-module, denoted by $\text{Fus}(\mathscr{M})$, with basis given by irreducible objects in $\mathscr{M}$.
		\begin{defi}
			Let $\mathscr{C}$ be a rigid C$^*$-tensor category and $\mathscr{M}$ a $\mathscr{C}$-module C$^*$-category. We say that $\mathscr{M}$ is \emph{co-finite} (resp. connected, resp. torsion) if $\text{Fus}(\mathscr{M})$ is co-finite (resp. connected, resp. torsion) as $\text{Irr}(\mathscr{M})$-based $\text{Fus}(\mathscr{C})$-module.
		\end{defi}
	\begin{rem}
		More precisely, the previous definitions give the following. Recall that we always assume that our C$^*$-categories are semi-simple. $\mathscr{M}$ is co-finite if and only if for all non-zero objects $X, Y\in Obj(\mathscr{M})$, the set $\{U\in Obj(\mathscr{C})\ |\ \text{Hom}_{\mathscr{M}}(U\bullet Y, X)\neq 0\}$ is finite. $\mathscr{M}$ is connected if and only if for all non-zero objects $X, Y\in Obj(\mathscr{M})$, there exists an object $U\in Obj(\mathscr{C})$ such that $\text{Hom}_{\mathscr{M}}(U\bullet Y, X)\neq 0$.
	\end{rem}

	\begin{theodefi}[{\cite[Lemma 3.10 \& Lemma 3.11]{YukiKenny}}]\label{theo.TorsionFreeYukiKenny}
		Let $\mathscr{C}$ be a rigid C$^*$-tensor category. We say that $\mathscr{C}$ is torsion-free if one (hence all) of the following equivalent condition holds:
		\begin{enumerate}[i)]
			\item For every torsion $\mathscr{C}$-module C$^*$-category $\mathscr{M}$, $\text{Fus}(\mathscr{M})\cong \text{Fus}(\mathscr{C})$ as based modules.
			\item Every non-trivial torsion $\mathscr{C}$-module C$^*$-category is equivalent to $\mathscr{C}$ as $\mathscr{C}$-module C$^*$-categories.
		\end{enumerate}
		
		In particular, a discrete quantum group $\widehat{\mathbb{G}}$ is torsion-free if and only if for every torsion $\mathscr{Rep}(\mathbb{G})$-module C$^*$-category $\mathscr{M}$, $\text{Fus}(\mathscr{M})\cong \text{R}(\mathbb{G})$ as based modules.
	\end{theodefi}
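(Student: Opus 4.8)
The plan is to prove $(ii)\Rightarrow(i)$ formally and to obtain $(i)\Rightarrow(ii)$ by categorifying the given based-module isomorphism; the last assertion will then follow by transporting everything through the De Commer--Yamashita dictionary. For $(ii)\Rightarrow(i)$ I would argue that a $\mathscr{C}$-module equivalence $G\colon\mathscr{C}\longrightarrow\mathscr{M}$ automatically sends simple objects to simple objects (it induces $C^*$-isomorphisms of all $End$-algebras) and satisfies $G(U\bullet X)\cong U\bullet G(X)$, so the induced map on Grothendieck groups is a based $Fus(\mathscr{C})$-module isomorphism $Fus(\mathscr{C})\cong Fus(\mathscr{M})$; in particular $Fus(\mathscr{M})\cong Fus(\mathscr{C})$, and in the trivial case $\mathscr{M}\simeq\mathscr{C}$ this is the obvious isomorphism, so $(i)$ holds for all torsion $\mathscr{C}$-module $C^*$-categories.

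For $(i)\Rightarrow(ii)$, fix a non-trivial torsion $\mathscr{C}$-module $C^*$-category $\mathscr{M}$ and a based $Fus(\mathscr{C})$-module isomorphism $\varphi\colon Fus(\mathscr{M})\xrightarrow{\sim}Fus(\mathscr{C})$, where $Fus(\mathscr{C})$ carries its standard module structure. Since $\varphi$ restricts to a bijection of bases, there is a simple object $m_0\in\mathscr{M}$ with $\varphi([m_0])=[\mathbbold{1}]$, and I would study the $\mathscr{C}$-module functor $F\colon\mathscr{C}\longrightarrow\mathscr{M}$, $F(U):=U\bullet m_0$, with module constraint supplied by the action constraint $\mu$ of $\mathscr{M}$. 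The key step is to show that $F$ is fully faithful. Faithfulness: for every nonzero $V\in\mathscr{C}$ one has $V\bullet m_0\neq 0$ --- indeed $\mathbbold{1}$ is a retract of $\overline{V}\otimes V$, hence $m_0$ is a retract of $\overline{V}\bullet(V\bullet m_0)$, which forces $V\bullet m_0\neq 0$ --- so $F(id_V)=id_{V\bullet m_0}\neq 0$; since $F$ is a $*$-functor between semisimple $C^*$-categories and $F(T)=0$ gives $F(T^*T)=F(T)^*F(T)=0$, it is enough that each $F|_{End_{\mathscr{C}}(U)}$ be injective, which holds because this $*$-homomorphism does not annihilate the central projection attached to any simple summand of $U$. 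Fullness then follows from a dimension count: as $\varphi$ preserves multiplicities and $\varphi([x\bullet m_0])=[x]\cdot[\mathbbold{1}]=[x]$, the multiplicity of $m_0$ in $x\bullet m_0$ equals the multiplicity of $[\mathbbold{1}]$ in $[x]$, i.e.\ $dim\, Hom_{\mathscr{M}}(m_0,x\bullet m_0)=dim\, Hom_{\mathscr{C}}(\mathbbold{1},x)$ for every simple $x$, hence for every object; applying the adjunction between $U\bullet-$ and $\overline{U}\bullet-$ on $\mathscr{M}$ (rigidity of $\mathscr{C}$) together with Frobenius reciprocity in $\mathscr{C}$ yields
\[
dim\, Hom_{\mathscr{M}}(U\bullet m_0,\,V\bullet m_0)=dim\, Hom_{\mathscr{C}}(\mathbbold{1},\,\overline{U}\otimes V)=dim\, Hom_{\mathscr{C}}(U,V)
\]
for all $U,V$; since $F$ is injective on these finite-dimensional hom-spaces it is bijective on them.

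It remains to check essential surjectivity. Given a simple $m\in\mathscr{M}$, connectedness of $\mathscr{M}$ provides $U\in\mathscr{C}$ with $Hom_{\mathscr{M}}(U\bullet m_0,m)\neq(0)$, so after normalising $m$ is a retract of $F(U)$, corresponding to a projection $p\in End_{\mathscr{M}}(F(U))$; by fullness $p=F(q)$, and faithfulness forces $q=q^2=q^*$; splitting $q$ by idempotent completeness of $\mathscr{C}$ produces $V\in\mathscr{C}$ with $F(V)\cong m$. As $F$ is additive and every object of $\mathscr{M}$ is a finite direct sum of simple ones, $F$ is essentially surjective, hence an equivalence of $\mathscr{C}$-module $C^*$-categories, proving $(ii)$. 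Finally, for the statement about $\widehat{\mathbb{G}}$ I would specialise $\mathscr{C}=\mathscr{R}ep(\mathbb{G})$, so that $Fus(\mathscr{C})=R(\mathbb{G})$ and the trivial module category is $\mathscr{R}ep(\mathbb{G})$ itself, and invoke the De Commer--Yamashita correspondence \cite{KennyMakoto}: it matches equivariant Morita classes of torsion actions of $\mathbb{G}$ with equivalence classes of torsion $\mathscr{R}ep(\mathbb{G})$-module $C^*$-categories, and the trivial action $\mathbb{C}$ with $\mathscr{R}ep(\mathbb{G})$, so condition $(i)$ of Theorem-Definition \ref{theo.TorsionFreeMeyerNest} becomes condition $(ii)$ above and hence, by the equivalence just proved, condition $(i)$ in the displayed form. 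I expect the genuine obstacle to be the fully faithfulness of $F$, and within it the fullness: one must convert the purely combinatorial multiplicity data carried by $\varphi$ into an honest comparison of hom-spaces, which requires threading the computation through the module-category adjunctions and coherence isomorphisms rather than through any single explicit formula.
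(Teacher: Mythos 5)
Your argument is correct, but note that the paper itself gives no proof of this statement: it is quoted verbatim from Section~3 of \cite{YukiKenny}, so there is nothing internal to compare against. Your reconstruction is essentially the Arano--De Commer argument: $(ii)\Rightarrow(i)$ by transporting simples along a module equivalence, and $(i)\Rightarrow(ii)$ by taking the simple object $m_0$ with $\varphi([m_0])=[\mathbbold{1}]$, forming the free module functor $U\mapsto U\bullet m_0$, and proving it fully faithful via the multiplicity identity $\dim Hom_{\mathscr{M}}(m_0,x\bullet m_0)=\dim Hom_{\mathscr{C}}(\mathbbold{1},x)$ combined with Frobenius reciprocity for the adjoint pair $U\bullet-\dashv\overline{U}\bullet-$, then essentially surjective via connectedness and idempotent completeness; the final assertion follows from the De Commer--Yamashita dictionary exactly as you say. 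The individual steps (nonvanishing of $V\bullet m_0$ from rigidity, injectivity on $End$-algebras via minimal central projections, the dimension count forcing bijectivity) are all sound.
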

	\begin{rem}
		Observe that the fusion ring associated to the C$^*$-tensor category $\mathscr{Rep}(\mathbb{G})$ is simply the representation ring of $\mathbb{G}$, $\text{R}(\mathbb{G})$. Hence, the above characterization of torsion-freeness of $\widehat{\mathbb{G}}$ is weaker than the strong torsion-freeness of $\widehat{\mathbb{G}}$ since here we work with a more restricted class of $\text{R}(\mathbb{G})$-modules. Namely, those arising from $\mathscr{Rep}(\mathbb{G})$-module C$^*$-categories.
	\end{rem}

\label{sec.DiscussionAboutKennyMakoto}

	Finally, K. De Commer and M. Yamashita \cite[Theorem 6.4]{KennyMakoto} have obtained a one-to-one correspondence between ergodic actions of $\mathbb{G}$ and connected $\mathscr{Rep}(\mathbb{G})$-module C$^*$-categories. Taking into account Theorem-Definition \ref{theo.TorsionFreeYukiKenny}, this allows in practice to replace torsion actions of a compact quantum group by torsion $\text{R}(\mathbb{G})$-modules. This correspondence is central for our purpose, so let us describe it more precisely. 

	If $(A,\delta)$ is a $\mathbb{G}$-C$^*$-algebra, then the category $\mathscr{M}_\delta$ of all $\mathbb{G}$-equivariant Hilbert $A$-modules is a left $\mathscr{Rep}(\mathbb{G})$-module C$^*$-category with left action of $\mathscr{Rep}(\mathbb{G})$ given by $u\bullet E:=H_u\otimes E,$ for all $u\in Obj(\mathscr{Rep}(\mathbb{G}))$ and all $(E,\delta_E)\in Obj(\mathscr{M}_\delta)$ where $\delta_{u\bullet E}: u\bullet E \longrightarrow u\bullet E\otimes C(\mathbb{G})$ is such that $\delta_{u\bullet E}(\xi\otimes\eta)=u_{13}(\xi\otimes \delta_E(\eta)),$ for all $\xi\in H_u$ and all $\eta\in E$. Moreover, $\mathscr{M}_\delta$ is semi-simple and connected whenever $\delta$ is ergodic. 
	\begin{rem}\label{rem.IrrCatHilbMod}
		In this case, it is also known (cf. \cite{KennyMakoto}) that every irreducible $\mathbb{G}$-equivariant Hilbert $A$-module arises as a $\mathbb{G}$-equivariant Hilbert submodule of $H_x\otimes A$ for some irreducible representation $x$ of $\mathbb{G}$.
	\end{rem}
	Conversely, if $\mathscr{M}$ is a connected $\mathscr{Rep}(\mathbb{G})$-module C$^*$-category, for every object $X\in Obj(\mathscr{M})$ the vector space \\$\mathcal{B}^X_X:=\underset{x\in \text{Irr}(\mathbb{G})}{\bigoplus} \text{Hom}_{\mathscr{M}}(u^x\bullet X, X)\otimes H_x$ is a unital $*$-algebra with an algebraic action of $\mathbb{G}$ denoted by $\delta_{X, \mathscr{M}}$ and given by $\delta_{X, \mathscr{M}}=\underset{x\in\text{Irr}(\mathbb{G})}{{\bigoplus}} (id\otimes \delta_{u^x})$ (cf. Example \ref{ex.ActHilbSpaceUnitRep} and see \cite[Section 5]{KennyMakoto} for more details). If $B^X_X$ denotes its C$^*$-completion, then $(B^X_X, \delta_{X, \mathscr{M}})$ is a $\mathbb{G}$-C$^*$-algebra (recall Theorem \ref{theo.AlgebraicActionsG}). Moreover, $\delta_{X, \mathscr{M}}$ is ergodic whenever $X$ is irreducible in $\mathscr{M}$. 
		
	These associations give rise to a one-to-one correspondence between ergodic actions of $\mathbb{G}$ (up to equivariant Morita equivalence) and connected $\mathscr{Rep}(\mathbb{G})$-module C$^*$-categories (up to equivalence of $\mathscr{Rep}(\mathbb{G})$-module C$^*$-categories). More precisely, if $(A, \delta)$ is an ergodic $\mathbb{G}$-C$^*$-algebra, then it can be viewed as an irreducible object in $\mathscr{M}_\delta$ and one has \cite[Proposition 6.2]{KennyMakoto} $(B^A_A, \delta_{A, \mathscr{M}_\delta})\underset{\mathbb{G}}{\sim} (A, \delta)$. Conversely, if $\mathscr{M}$ is a connected $\mathscr{Rep}(\mathbb{G})$-module C$^*$-category and $X\in \text{Irr}(\mathscr{\mathscr{M}})$, then one has \cite[Proposition 6.3]{KennyMakoto} $\mathscr{M}_{\delta_{X, \mathscr{M}}}\cong \mathscr{M}$. In particular, $\mathscr{M}_\delta$ is a torsion $\mathscr{Rep}(\mathbb{G})$-module C$^*$-category if and only if $\delta$ is a torsion action of $\mathbb{G}$. In this correspondence we have that $\mathscr{M}_\delta\cong \mathscr{Rep}(\mathbb{G})$ if and only if $(A,\delta)\underset{\mathbb{G}}{\sim}(\mathbb{C}, trv.)$.

\section{\textsc{Torsion-freeness for divisible discrete quantum subgroups}}\label{sec.StabilityTorsionFreeness}


In this section we are going to prove that torsion-freeness is preserved by \emph{divisible} discrete quantum subgroups. 

It is important to observe that torsion-freeness in the sense of Meyer-Nest is \emph{not} preserved, in general, by discrete quantum subgroups. For instance, consider $\widehat{SO_q(3)}<\widehat{SU_q(2)}$. While $\widehat{SU_q(2)}$ is torsion-free by \cite{VoigtBaumConnesFree}, $\widehat{SO_q(3)}$ is \emph{not} torsion-free by \cite{SoltanSO3}. In relation with the results obtained in \cite{RubenAmauryTorsion}, we can consider an other more elaborated example. Let $\mathbb{G}$ be a compact quantum group such that $\widehat{\mathbb{G}}$ is torsion-free. Then the dual of the free product $\mathbb{G}\ast SU_q(2)$ is torsion-free (because $\widehat{SU_q(2)}$ is torsion-free for all $q\in (-1,1)\backslash \{0\}$ as it is shown in \cite{VoigtBaumConnesFree} and torsion-freeness is preserved by free product as it is shown in \cite{YukiKenny}). Consider the Lemeux-Tarrago's discrete quantum subgroup $\widehat{\mathbb{H}}_q<\widehat{\mathbb{G}\ast SU_q(2)}$ which is such that $\mathbb{H}_q$ is monoidally equivalent to the free wreath product $\mathbb{G}\wr_*S^+_N$ (see \cite{TarragoWreath} for more details). It is explained in \cite{RubenAmauryTorsion} that the dual of $\mathbb{G}\wr_*S^+_N$ is \emph{never} torsion-free. Hence $\widehat{\mathbb{H}}_q$ neither (because torsion-freeness is preserved under monoidally equivalence as it is shown in \cite{VoigtBaumConnesFree} or \cite{RijdtMonoidalProbabilistic}). 
	
	Observe that strong torsion-freeness (cf. Definition \ref{defi.StrongTorsionFree}) is \emph{not} preserved, in general, by discrete quantum subgroups as it is pointed out in \cite{YukiKenny}. However, it is whenever the fusion subring $\text{R}(\mathbb{H})$ is divisible in the fusion ring $\text{R}(\mathbb{G})$ \cite{YukiKenny}. Thus, it is reasonable to expect that torsion-freeness (cf. Definition \ref{defi.TorsionFree}) is preserved under \emph{divisible} discrete quantum subgroups. Inspired by the study carried out in \cite{RubenAmauryTorsion}, our strategy consists in applying techniques from \cite{YukiKenny} for proving the following stability result: given a compact quantum group $\mathbb{G}$, $\widehat{\mathbb{G}}$ is torsion-free if and only if every divisible discrete quantum subgroup $\widehat{\mathbb{H}}<\widehat{\mathbb{G}}$ is torsion-free.
	
	\subsection{Preparatory observations}\label{sec.PreparatoryObs}
	\begin{lem}\label{lem.InducedTorsionModules}
		Let $R:=(\mathbb{Z}_I, \oplus, \otimes)$ be a $I$-based ring and $S:=(\mathbb{Z}_L, \oplus, \otimes)$ be a $L$-based ring. Let $N:=(\mathbb{Z}_J, \oplus, \otimes)$ be a based $S$-module with basis $J$. Assume that $S$ is a based subring of $R$. If $S$ is divisible in $R$, say $R\cong \underset{\Omega}{\bigoplus} S$ as (right) based $S$-modules, then the induced $R$-module $\text{Ind}^R_S(N)$ is a based $R$-module with basis: $$\widetilde{J}:=\{\mathbbold{1}_t\odot j\ |\ t\in\Omega, j\in J\},$$
		where $\mathbbold{1}_t\in R$ denotes the copy of $\mathbbold{1}_S$ at position $t\in \Omega$. Moreover, $\text{Ind}^R_S(N)$ is a torsion $R$-module whenever $N$ is a torsion $S$-module.
	\end{lem}
	\begin{proof}
	Since $S$ is a divisible based subring of $R$, we have $R\cong \underset{\Omega}{\bigoplus} S$ as (right) based $S$-modules. Through this isomorphism we write $\mathbbold{1}_t\in I\subset R$ for the copy of $\mathbbold{1}_S$ at position $t\in \Omega$ in this decomposition. For more clarity in the exposition, we put $t:=\mathbbm{1_t}$. The isomorphism $R\cong \underset{\Omega}{\bigoplus} S$ as (right) based $S$-modules yields straightforwardly that $\widetilde{J}:=\{t\odot j\ |\ t\in\Omega, j\in J\}$ is a $\mathbb{Z}$-basis for $\text{Ind}^R_S(N)=R\underset{S}{\odot}N$. 
	Moreover, $R\underset{S}{\odot} N$ is $\widetilde{J}$-based $R$-module because $R$ is a based $I$-module and $N$ is $J$-based module. To see this, we are going to use Remark \ref{rem.BilinearForm}. Given $\widetilde{j}, \widetilde{j}'\in \widetilde{J}$, say $\widetilde{j}:=t\odot j$ and $\widetilde{j}':=t'\odot j'$ with $t, t'\in\Omega$ and $j,j'\in J$, we define a $\mathbb{Z}$-bilinear form $\langle \cdot, \cdot\rangle: \text{Ind}^R_S(N)\times \text{Ind}^R_S(N)\rightarrow R$ such that $\langle \widetilde{j}, \widetilde{j}'\rangle:=t\otimes \langle j, j'\rangle \otimes \overline{t'}$.  Then we check the following for all $t, t'\in\Omega$, $j,j'\in J$ and $\alpha\in I$:
	\begin{enumerate}
		\item $\langle \alpha\otimes \widetilde{j}, \widetilde{j}'\rangle=\alpha\otimes\langle \widetilde{j}, \widetilde{j}'\rangle$. Indeed, by definition of the $R$-module structure of $\text{Ind}^R_S(N)=R\underset{S}{\odot}N$, the fusion decompositions and using the isomorphism $R\cong \underset{\Omega}{\bigoplus} S$ one has:
		\begin{equation*}
		\begin{split}
			\langle \alpha\otimes \widetilde{j}, \widetilde{j}'\rangle&=\langle \alpha\otimes t\odot j, t'\odot j'\rangle=\langle (\alpha\otimes t)\odot j, t'\odot j'\rangle=\underset{i\in I}{\sum} N^{i}_{\alpha, t}\cdot\langle i\odot j, t'\odot j' \rangle\\
			&\cong \underset{i\in I}{\sum} N^{i}_{\alpha, t}\cdot\langle t_i\odot (l_i\otimes j), t'\odot j' \rangle=\underset{i\in I, k\in J}{\sum} N^{i}_{\alpha, t} N^{k}_{l_i, j}\cdot\langle t_i\odot k, t'\odot j' \rangle\\
			&=\underset{i\in I, k\in J}{\sum} N^{i}_{\alpha, t} N^{k}_{l_i, j}\cdot t_i\otimes \langle k, j' \rangle\otimes \overline{t'}=\underset{i\in I}{\sum} N^{i}_{\alpha, t} \cdot t_i\otimes \langle l_i\otimes j, j' \rangle\otimes \overline{t'}\\
			&=\underset{i\in I}{\sum} N^{i}_{\alpha, t} \cdot t_i\otimes l_i\otimes \langle j, j' \rangle\otimes \overline{t'}\cong \underset{i\in I}{\sum} N^{i}_{\alpha, t} \cdot i\otimes \langle j, j' \rangle\otimes \overline{t'}\\
			&=\alpha\otimes t \otimes \langle j, j' \rangle\otimes \overline{t'}=\alpha\otimes\langle \widetilde{j}, \widetilde{j}'\rangle.
		\end{split}
		\end{equation*}
		
		\item $\langle \widetilde{j}, \widetilde{j}'\rangle=\overline{\langle \widetilde{j}', \widetilde{j}\rangle}$. This is clear.
		
		\item $\tau_R\big(\langle \widetilde{j}, \widetilde{j}'\rangle\big)=\delta_{\widetilde{j}, \widetilde{j}'}=\delta_{t, t'}\delta_{j, j'}$. By definition one has $\langle \widetilde{j}, \widetilde{j}'\rangle= t\otimes \langle j, j'\rangle \otimes \overline{t'}$, where $t, \overline{t'}\in R$ and $\langle j, j'\rangle \in S$. The fusion decomposition of $\langle j, j'\rangle$ as an element in $S$ is given by $\underset{l\in L}{\sum} N^{j'}_{\overline{l}, j}\cdot l$ (cf. Remark \ref{rem.BilinearForm}). Next, recall that $t$ (resp. $t'$) denotes the copy of $\mathbbold{1}_S$ at position $t\in\Omega$ (resp. $t'\in\Omega$) in the decomposition $R\cong \underset{\Omega}{\bigoplus} S$ as (right) based $S$-modules. In particular, one has $t\cong t\otimes \mathbbold{1}_{S}$ (resp. $t'\cong t'\otimes \mathbbold{1}_S$) as explained in Remark \ref{rem.DivisibilityDecomp}. Hence:
		$$ t\otimes \langle j, j'\rangle\cong t\otimes \mathbbold{1}_S\otimes \langle j, j'\rangle=\underset{l\in L}{\sum} N^{j'}_{\overline{l}, j}\cdot t\otimes l.$$
		
		At this point, notice that we are considering $S$ as a subring of $R$, hence $\mathbbold{1}_S=\mathbbold{1}_R$ and the element $t\otimes l$ with $l\in L$ is by assumption a basis element of $R$. Then one can write the following (cf. Remark \ref{rem.BilinearForm}):
		$$\tau_R\big(\langle \widetilde{j}, \widetilde{j}'\rangle\big)=\tau_R\Big(\underset{l\in L}{\sum} N^{j'}_{\overline{l}, j}\cdot (t\otimes l)\otimes (\overline{t'}\otimes \mathbbold{1}_S)\Big)=\underset{l\in L}{\sum} N^{j'}_{\overline{l}, j}\tau_R\big((t\otimes l)\otimes (\overline{t'\otimes \overline{\mathbbold{1}}_S})\big)=\underset{l\in L}{\sum} N^{j'}_{\overline{l}, j}\delta_{t\otimes l, t'\otimes \mathbbold{1}_S}=\delta_{t, t'}\delta_{j, j'},$$
		which yields the claim.
		
		\item $\tau\big(\alpha\otimes\langle \widetilde{j}, \widetilde{j}'\rangle\big)\in\mathbb{N}$. This is clear.
	\end{enumerate}
	
	According to Remark \ref{rem.BilinearForm} one has $N^{\widetilde{j}}_{\alpha, \widetilde{j'}}=\tau\big(\alpha\otimes\langle \widetilde{j}, \widetilde{j}'\rangle\big)$ and $\langle \widetilde{j}, \widetilde{j}'\rangle=\underset{i\in I}{\sum} N^{\widetilde{j'}}_{\overline{i}, \widetilde{j}}\cdot i$, for all $\widetilde{j}, \widetilde{j}'\in \widetilde{J}$ and all $\alpha\in I$. In particular, we deduce from the relation $\langle \widetilde{j}', \widetilde{j}\rangle=\overline{\langle \widetilde{j}, \widetilde{j}'\rangle}$ that, given  and $\alpha\in I$, $\widetilde{j}\subset \alpha\otimes \widetilde{j}'$ if and only if $\widetilde{j}'\subset \overline{\alpha}\otimes \widetilde{j}$.		
		
	Let us show that $R\underset{S}{\odot} N$ is torsion as soon as $N$ is torsion. We will follow a direct argment without using the bilinear form on $R\underset{S}{\odot} N$ above. 
	
	We start by showing that $R\underset{S}{\odot} N$ is co-finite. Given $\widetilde{j}, \widetilde{j}'\in \widetilde{J}$, we have to show that the set $A:=\{\alpha\in I\ |\ \widetilde{j}\subset \alpha\otimes \widetilde{j}'\}$ is finite. Put $\widetilde{j}:=t\odot j$ and $\widetilde{j}':=t'\odot j'$ with $t, t'\in\Omega$ and $j,j'\in J$. Take $\alpha\in A$, that is, $t\odot j\subset \alpha\otimes (t'\odot j')=(\alpha\otimes t')\odot j'$ (where the last equality follows from the $R$-module structure of $R\underset{S}{\odot} N$). Let us consider the fusion decomposition of $\alpha\otimes t'$, say $\alpha\otimes t'=\underset{i'\in I}{\sum} N^{i'}_{\alpha, t'}\cdot i'$. Given $i'\in I$, consider the corresponding basis element in $L$ through the identification $R\cong \underset{\Omega}{\bigoplus} S$, say $l_{i'}\in L$. Hence $(\alpha\otimes t')\odot j'=\underset{\underset{k'\in J}{i'\in I}}{\sum} N^{i'}_{\alpha, t'}N^{k'}_{l_{i'}, j'}\cdot t_{i'}\odot k'.$
	Saying that $\alpha$ is such that $t\odot j\subset (\alpha\otimes t')\odot j'$ means that there exists $i'\in I$ such that $t_{i'}=t$ (which allows to write $i'\cong t\otimes l_{i'}$, cf. Remark \ref{rem.DivisibilityDecomp}) and $N^{i'}_{\alpha, t'}N^{j}_{l_{t_{i'}}, j'}\neq 0$, i.e. $N^{i'}_{\alpha, t'}\neq 0$ and $N^{j}_{l_{i'}, j'}\neq 0$. Put differently in order to simplify notations, the fact that $\alpha\in A$ implies that there exists some $l\in L$ such that $j\subset l\otimes j'$ and $t\otimes l\subset \alpha\otimes t'$. Since $N$ is co-finite by assumption, there can exist only a finite set $L_0$ of elements $l$ with $j\subset l\otimes j'$. Next, co-finiteness of $R$ (as a left based module over itself) shows that there can exist only finitely many $\alpha$ with $t\otimes l\subset \alpha\otimes t'$, for some $l\in L_0$. In conclusion, $A$ is finite.
	
	Finally, we show that $R\underset{S}{\odot} N$ is connected. Given $\widetilde{j}, \widetilde{j}'\in \widetilde{J}$, we have to show that there exists $\alpha\in I$ such that $\widetilde{j}\subset \alpha\otimes \widetilde{j}'$. Put $\widetilde{j}:=t\odot j$ and $\widetilde{j}':=t'\odot j'$ with $t, t'\in\Omega$ and $j,j'\in J$. Since $N$ is connected, there exists $\gamma\in L$ such that $j\subset \gamma\otimes j'$. Then we also have that $t\odot j\subset t\odot (\gamma\otimes j')=(t\otimes \gamma)\odot j'$. Next, consider the fusion decomposition of $t\otimes \gamma$ in $R$, say $\underset{i\in I(t, \gamma)}{\sum} N^{i}_{t, \gamma}\cdot i$, where $I(t, \gamma)$ is the subset of $I$ formed by those basis elements $i$ such that $N^{i}_{t, \gamma}\neq 0$. Given $t'$, connectedness of $R$ yields that for each $i\in I(t, \gamma)$ appearing in this decomposition, there exists $\alpha_i\in I(t, \gamma)$ such that $i\subset \alpha_i\otimes t'$. Put $\alpha:=\underset{i\in I(t, \gamma)}{\oplus}\ \alpha_i$. By construction, one has that $t\otimes \gamma\subset \alpha\otimes t'$, hence $t\odot j\subset (\alpha\otimes t')\odot j'=\alpha\otimes (t'\odot j')$, which yields connectedness of $R\underset{S}{\odot} N$ as claimed.
	\end{proof}
	\begin{rem}
		Note that the previous proof shows that, under the divisibility assumption, $R\underset{S}{\odot} N$ is connected (resp. co-finite) as soon as $N$ is connected (resp. co-finite).
	\end{rem}
	
	\begin{lem}\label{lem.RestrictedTorsionModules}
		Let $R:=(\mathbb{Z}_I, \oplus, \otimes)$ be a $I$-based ring and $S:=(\mathbb{Z}_L, \oplus, \otimes)$ be a $L$-based ring. Let $M:=(\mathbb{Z}_J, \oplus, \otimes)$ be a based $R$-module with basis $J$. Assume that $S$ is a based subring of $R$. If $M$ is a $R$-module, then $\text{Res}^{R}_{S}(M)$ decomposes as a direct sum of connected $S$-modules. Moreover, if $M$ is co-finite, then $\text{Res}^{R}_{S}(M)$ decomposes as a direct sum of torsion $S$-modules.
	\end{lem}
	\begin{proof}
		Let $M$ be a $R$-module and denote by $N:=\text{Res}^{R}_{S}(M)$ the module $M$ equipped with the restriction action from $R$ to $S$, so that $N$ is a $S$-module. We are going to show that $N$ decomposes as a direct sum of connected $S$-modules. Let $J$ denote the basis of $M$ (which is also the basis for $N$). Next, take any $j_1\in J$ and put $N_1:=\langle \beta\otimes j_1\ |\ \beta\in L \rangle$ the based $S$-submodule generated by all $j\subset \beta \otimes j_1$ with $\beta\in L$. Then it is clear that $N_1$ is connected. If $N_1=N$, then we obtain already the decomposition claimed above. Otherwise, take $j_2\in J\backslash N_1$ and put $N_2:=\langle \beta\otimes j_2\ |\ \beta\in L \rangle$, which is connected again. If $N=N_1\oplus N_2$, then we obtain already the claim. Otherwise, we continue this process so that we obtain the decomposition $N=\bigoplus N_i$, where $\{N_i\}$ is a collection of connected $S$-modules and the claim is proven.
		
		If moreover $M$ is co-finite, then $N$ is obviously co-finite since $\{\beta\in L\ |\ j'\subset \beta\otimes j\}\subset \{\alpha\in I\ |\ j'\subset \alpha\otimes j\}$, for all $j,j'\in J$. In the same way, the $S$-submodules $N_i$ constructed before are still co-finite. Finally, it is straightforward to check that co-finiteness is preserved under direct summands. In other words, if $M$ is co-finite, then $N$ decomposes as a direct sum of connected and co-finite (i.e. torsion) $S$-modules.
	\end{proof}

	A direct consequence of Lemma \ref{lem.InducedTorsionModules} and Lemma \ref{lem.RestrictedTorsionModules} is the following:
	\begin{cor}\label{cor.IndResTorsion}
		Let $\mathbb{G}$ and $\mathbb{H}$ be two compact quantum groups such that $\widehat{\mathbb{H}}<\widehat{\mathbb{G}}$. If $N$ is a torsion $\text{R}(\mathbb{H})$-module, then the induced module, $\text{Ind}^{\text{R}(\mathbb{G})}_{\text{R}(\mathbb{H})}(N)$, is a torsion $\text{R}(\mathbb{G})$-module whenever $\widehat{\mathbb{H}}$ is divisible in $\widehat{\mathbb{G}}$. If $M$ is a torsion $\text{R}(\mathbb{G})$-module, then $\text{Res}^{\text{R}(\mathbb{G})}_{\text{R}(\mathbb{H})}(M)$ decomposes as a direct sum of torsion $\text{R}(\mathbb{H})$-modules.
	\end{cor}

	\begin{pro}\label{pro.CorrespondenceTorsionActions}
		Let $\mathbb{G}$ and $\mathbb{H}$ be two compact quantum groups such that $\widehat{\mathbb{H}}<\widehat{\mathbb{G}}$. If $(B,\beta)$ is a torsion action of $\mathbb{H}$, then the induced action, $(\text{Ind}^{\mathbb{G}}_{\mathbb{H}}(B), \widetilde{\beta})$, is a torsion action of $\mathbb{G}$. If $(A, \delta)$ is a torsion action of $\mathbb{G}$, then $(\text{Res}^\mathbb{G}_{\mathbb{H}}(A), \delta')$ decomposes as a direct sum of actions of $\mathbb{H}$ which are $\mathbb{H}$-Morita equivalent to torsion actions of $\mathbb{H}$.
	\end{pro}
	\begin{proof}
		The first part of the statement has been explained in Examples \ref{ex.TorsionActions}. Next, let $(A,\delta)$ be a torsion action of $\mathbb{G}$ and consider the corresponding $\mathscr{R}$ep$(\mathbb{G})$-module C$^*$-category $\mathscr{M}_{\delta}$ by \cite{KennyMakoto}. Denote by $\mathscr{N}_{\delta}$ the C$^*$-category $\mathscr{M}_{\delta}$ equipped with the restriction action from $\mathscr{R}$ep$(\mathbb{G})$ to $\mathscr{R}$ep$(\mathbb{H})$, so that $\mathscr{N}_{\delta}$ is a $\mathscr{R}$ep$(\mathbb{H})$-module C$^*$-category. 
		For any irreducible object $X\in Obj(\mathscr{N}_{\delta})$, denote by $\mathscr{N}^X_{\mathbb{H}}$ the module C$^*$-subcategory of $\mathscr{N}_{\delta}$ generated by $X$ and the action of $\mathscr{R}$ep$(\mathbb{H})$ (by restriction), which is a $\mathscr{R}$ep$(\mathbb{H})$-module C$^*$-category. By Lemma \ref{lem.RestrictedTorsionModules}, there exists a collection of irreducible objects $\{X_i\}\subset Obj(\mathscr{N}_{\delta})$ such that $\mathscr{N}_{\delta}=\bigoplus \mathscr{N}^{X_i}_{\mathbb{H}}$, where each $\mathscr{N}^{X_i}_{\mathbb{H}}$ is a torsion $\mathscr{R}$ep$(\mathbb{H})$-module C$^*$-category. In other words, we obtain a collection of torsion actions $\{(A_i,\delta_i)\}$ of $\mathbb{H}$ such that $(\text{Res}^\mathbb{G}_{\mathbb{H}}(A), \delta')\cong \bigoplus (A_i, \delta_i)$ as we wanted to show.
	\end{proof}
	
	\begin{lem}\label{lem.StabilisersDivisibleInd}
			Let $R:=(\mathbb{Z}_I, \oplus, \otimes)$ be a $I$-based ring and $S:=(\mathbb{Z}_L, \oplus, \otimes)$ be a $L$-based ring. Let $N:=(\mathbb{Z}_J, \oplus, \otimes)$ be a based $S$-module with basis $J$. Assume that $S$ is a based subring of $R$. If $S$ is divisible in $R$ and we consider $\text{Ind}^R_S(N)=R\underset{S}{\odot}N$ as a based $R$-module with basis $\widetilde{J}$ as in Lemma \ref{lem.InducedTorsionModules}, then $\text{Stab}(\mathbbold{1}_{t_0}\odot j)\subset L$, for all $j\in J$.
		\end{lem}
		\begin{proof}
			Recall that we write $\mathbbold{1}_t\in I\subset R$ for the copy of $\mathbbold{1}_S$ at position $t\in \Omega$ in the decomposition $R\cong \underset{\Omega}{\bigoplus} S$ as (right) based $S$-modules given by the divisibility assumption. For more clarity in the exposition, we put $t:=\mathbbm{1_t}$. Recall from Remark \ref{rem.DivisibilityDecomp} that we put $t_0=\mathbbold{1}=\mathbbold{1}_S=\mathbbold{1}_R$. Next, given $j\in J$ consider the basis element $t_0\odot j\in \widetilde{J}$. By definition we have:
			$$\text{Stab}(t_0\odot j)=\{\alpha\in I\ |\ t_0\odot j\subset \alpha \otimes (t_0\odot j)\}.$$
			
			Notice that given $\alpha\in I$, the $R$-module structure of $R\underset{S}{\odot}N$ allows to write $\alpha \otimes (t_0\odot j)=(\alpha\otimes t_0)\odot j=\alpha\odot j$. Now, we express $\alpha\odot j\in R\underset{S}{\odot}N$ in terms of the basis $\widetilde{J}$. Namely, using the divisibility assumption, one can write $\alpha\cong t_\alpha\otimes l_\alpha$, for some $t_\alpha\in I$ and $l_\alpha\in L$ (cf. Remark \ref{rem.DivisibilityDecomp}). Hence:
			\begin{equation*}
			\begin{split}
				\alpha\odot j&\cong t_\alpha\odot (l_\alpha\otimes j)=\underset{k\in J}{\sum} N_{l_\alpha, j}^k\cdot t_\alpha\odot k.
			\end{split}
			\end{equation*}
			
			Accordingly, $\alpha\in \text{Stab}(t_0\odot j)$ if and only if $t_0\odot j\subset \underset{k\in J}{\sum} N_{l_\alpha, j}^k\cdot t_\alpha\odot k$, which is equivalent to say that $N_{l_\alpha, j}^j\neq 0$ and $t_\alpha=t_0$. The latter implies that $\alpha\cong t_0\otimes l_\alpha=l_\alpha\in L$, which finishes the proof.
		\end{proof}
		\begin{rem}\label{rem.StabilisersDivisibleInd}
			We keep the previous notations. Notice that from the proof of the previous lemma we can conclude that $\text{Stab}(t_0\odot j)=\text{Stab}(j)$, for all $j\in J$.
		\end{rem}
	
	\subsection{Induction for module C$^*$-categories}\label{sec.AlternativeApproachTorsionFree}
		First of all, in the purely algebraic setting, the notion of relative tensor product of module categories over tensor categories goes back to constructions by P. Deligne \cite{Deligne} and D. Tambara \cite{Tambara} (see as well \cite{MugerSubfactorsCat}, \cite{EtingofFusCatHomTh}, \cite{IgnacioTensorProdCat}, \cite{DouglasSchommerPriesSnyder} for further related developments). The corresponding construction for module C$^*$-categories is certainly known to experts or, at least, foreseen by experts (see for instance \cite[Section 4.1]{SergeyMakoto}, \cite{AmbrogioLimitsCCat} or \cite{MeyerColimitsCCorresp}), but a formal and general definition seemed to be elusive in the literature. The construction of balanced tensor products of module categories over C$^*$-categories (without any semisimplicity or rigidity assumption) has been established in a recent work of J. Antoun and C. Voigt \cite{VoigtBalancedTensorProductCategories} as a byproduct of a more general category related considerations.
	
	First, let us introduce some terminology.
	\begin{defi}
	Let $\mathscr{C}$ be a C$^*$-tensor category. Let $(\mathscr{M}, \bullet, \mu, e)$ be a right $\mathscr{C}$-module C$^*$-category, $(\mathscr{N}, \bullet, \mu', e')$ a left $\mathscr{C}$-module C$^*$-category and $\mathscr{A}$ a C$^*$-category (resp. $\mathbb{C}$-linear category). A C$^*$-bifunctor (resp. $\mathbb{C}$-linear bifunctor) $F: \mathscr{M}\times\mathscr{N}\longrightarrow \mathscr{A}$ is called $\mathscr{C}$-balanced if it is equipped with a natural equivalence $b: F\circ (\bullet\times id_{\mathscr{N}})\longrightarrow F\circ (id_{\mathscr{M}}\times \bullet)$ such that
	\begin{enumerate}[i)]
		\item the diagram:
			$$
 				\xymatrix@!C=13mm@R=12mm{
 					& &\mbox{$F(X\bullet (U\otimes V), Y)$}\ar[dll]_{\mbox{$b_{X, U\otimes V, Y}$}}\ar[drr]^{\mbox{$F\circ (\mu_{X, U, V}\times id_{Y})$}} & &\\
 					\mbox{$F(X, (U\otimes V)\bullet Y)$}\ar[dr]_{\mbox{$F\circ (id_{X}\times \mu'_{U, V, Y})$}}& & & &\mbox{$F((X\bullet U)\bullet V, Y)$}\ar[dl]^{\mbox{$b_{X\bullet U, V, Y}$}}\\
 					&\mbox{$F(X, U\bullet (V\bullet Y))$}& &\mbox{$F(X\bullet U,V\bullet Y)$}\ar[ll]^{\mbox{$b_{X, U, V\bullet Y}$}}&}
 			$$
			is commutative for all objects $U, V\in Obj(\mathscr{C})$, $X\in Obj(\mathscr{M})$ and $Y\in Obj(\mathscr{N})$.
		\item the diagram:
			$$
				\xymatrix{
					\mbox{$F(X\bullet \mathbbold{1},  Y)$}\ar[dr]_{\mbox{$F\circ (e_X\times id_Y)$}}\ar[rr]^{\mbox{$b_{X,\mathbbold{1},Y}$}}&&\mbox{$F(X, \mathbbold{1}\bullet Y)$}\ar[dl]^{\mbox{$F\circ (id_X\times e'_Y)$}}\\
					&\mbox{$F(X, Y)$}&}
			$$
			is commutative for all objects $X\in Obj(\mathscr{M})$ and $Y\in Obj(\mathscr{N})$.
	\end{enumerate}
	
	The collection of all $\mathscr{C}$-balanced functors from $\mathscr{M}\times\mathscr{N}$ to $\mathscr{A}$ together with natural transformations between them forms a C$^*$-category (resp. involutive category) denoted by $\mathscr{B}il_{\mathscr{C}}(\mathscr{M}, \mathscr{N}; \mathscr{A})$.
\end{defi}
\begin{rem}\label{rem.MultiplierCategory}
	On the one hand, for the sake of the presentation we omit the definition of natural transformation between $\mathscr{C}$-balanced functors. On the other hand, it is important to notice that the notion of $\mathscr{C}$-balanced functor appearing in the algebraic context \cite{Tambara} must be adapted to the C$^*$-level. Namely, if the C$^*$-categories involved are countably additive, we will have to deal with \emph{semi-categories}, that is, categories who do not necessarily contain identity homomorphisms. This entails the definition of a \emph{multiplier C$^*$-category} together with a notion of \emph{non-degenerate C$^*$-functor} by imitating the standard C$^*$-algebraic case. This has been done in \cite{VoigtBalancedTensorProductCategories}. For our purpose we do not need such general considerations in view of assumptions of Note \ref{note.AssumptionsCtensorcat} and Remark \ref{rem.AssumptionsSemiSimple}, so that the previous definition is useful enough (compare with \cite[Definition 5.8]{VoigtBalancedTensorProductCategories}).
\end{rem}
	
	Let $\mathscr{C}$ be a countably additive C$^*$-tensor category, $\mathscr{M}$ a right $\mathscr{C}$-module C$^*$-category and $\mathscr{N}$ a left $\mathscr{C}$-module C$^*$-category. Then there exists a countably additive C$^*$-category $\mathscr{M}\underset{\mathscr{C}}{\boxtimes} \mathscr{N}$ together with a $\mathscr{C}$-balanced functor $Q_{\mathscr{C}}:\mathscr{M}\times\mathscr{N}\longrightarrow \mathscr{M}\underset{\mathscr{C}}{\boxtimes} \mathscr{N}$ such that composition with $Q_{\mathscr{C}}$ induces an equivalence of C$^*$-categories $\mathscr{F}un(\mathscr{M}\underset{\mathscr{C}}{\boxtimes} \mathscr{N}, \mathscr{A})\cong\mathscr{B}il_{\mathscr{C}}(\mathscr{M}, \mathscr{N}; \mathscr{A})$, for all C$^*$-category $\mathscr{A}$ (see \cite[Theorem 5.10]{VoigtBalancedTensorProductCategories} for a proof). The category $\mathscr{M}\underset{\mathscr{C}}{\boxtimes} \mathscr{N}$ is called \emph{relative tensor product of $\mathscr{M}$ and $\mathscr{N}$ with respect to $\mathscr{C}$}. In this way, if $\mathscr{D}$ is another countable additive C$^*$-tensor category such that there exists a C$^*$-tensor functor $J: \mathscr{D}\longrightarrow \mathscr{C}$ (so that $\mathscr{C}$ can be considered as a right $\mathscr{D}$-module C$^*$-category, recall Examples \ref{exs.ModuleCategories}) and $\mathscr{N}$ is a left $\mathscr{D}$-module C$^*$-category, we define the \emph{induced $\mathscr{C}$-module C$^*$-category of $\mathscr{N}$} to be the following $\mathscr{C}$-bimodule C$^*$-category $\text{Ind}^{\mathscr{C}}_{\mathscr{D}}(\mathscr{N}):=\mathscr{C}\underset{\mathscr{D}}{\boxtimes} \mathscr{N}$. On the one hand, the $\mathscr{C}$-bimodule structure on $\text{Ind}^{\mathscr{C}}_{\mathscr{D}}(\mathscr{N})$ is simply given by the one of $\mathscr{C}$ itself. On the other hand, observe that thanks to \cite[Lemma 5.11]{VoigtBalancedTensorProductCategories}, we have $\text{Ind}^{\mathscr{D}}_{\mathscr{D}}(\mathscr{N})\cong\mathscr{N}$ as left $\mathscr{D}$-module C$^*$-categories.
	
	In particular, let $\mathbb{G}$ and $\mathbb{H}$ be compact quantum groups such that $\widehat{\mathbb{H}}<\widehat{\mathbb{G}}$. As already pointed out in Examples \ref{exs.ModuleCategories}(\ref{ex.ModCatQG}), we have a fully faithful functor between C$^*$-tensor categories, $J:\mathscr{Rep}(\mathbb{H})\longrightarrow\mathscr{Rep}(\mathbb{G})$, given by the natural inclusion of $\mathscr{Rep}(\mathbb{H})$ inside $\mathscr{Rep}(\mathbb{G})$ as a full subcategory. In this way, $\mathscr{Rep}(\mathbb{G})$ can be viewed as a right $\mathscr{Rep}(\mathbb{H})$-module C$^*$-category. So, given a left $\mathscr{Rep}(\mathbb{H})$-module C$^*$-category $\mathscr{N}$ we put:
	$$\mbox{\text{Ind}}^{\mathscr{Rep}(\mathbb{G})}_{\mathscr{Rep}(\mathbb{H})}(\mathscr{N}):=\mathscr{Rep}(\mathbb{G})\underset{\mathscr{Rep}(\mathbb{H})}{\boxtimes} \mathscr{N}.$$

	\begin{rem}
			Let $\mathscr{C}$ and $\mathscr{D}$ be two countable additive C$^*$-categories. One defines their \emph{minimal} and \emph{maximal} tensor product, denoted by $\mathscr{C}\underset{min}{\boxtimes} \mathscr{D}$ and $\mathscr{C}\underset{max}{\boxtimes} \mathscr{D}$, respectively as the subobject completion of the corresponding algebraic minimal and maximal tensor products, respectively. It is shown in \cite[Proposition 3.4]{VoigtBalancedTensorProductCategories} that $\mathscr{C}\underset{min}{\boxtimes} \mathscr{D}$ and $\mathscr{C}\underset{max}{\boxtimes} \mathscr{D}$ are again countably additive. These definitions make use of the notion of multiplier C$^*$-category. But, as we have already mentioned in Remark \ref{rem.MultiplierCategory}, assumptions of Note \ref{note.AssumptionsCtensorcat} and Remark \ref{rem.AssumptionsSemiSimple} allow us to disregard such general considerations. More precisely, if $\mathscr{C}$ and $\mathscr{D}$ are semi-simple C$^*$-categories, then there is no completions involved in the definition of their minimal or maximal tensor products, so that $\mathscr{C}\underset{min}{\boxtimes} \mathscr{D}\cong \mathscr{C}\underset{max}{\boxtimes} \mathscr{D}$. Moreover, one shows  \cite[Proposition 3.10]{VoigtBalancedTensorProductCategories} that this tensor product coincides with the Deligne tensor product, which we denote simply by $\boxtimes$. The proof of \cite[Proposition 3.10]{VoigtBalancedTensorProductCategories} shows that $\mathscr{C}\boxtimes \mathscr{D}$ is again semi-simple with a complete set of irreducible objects given by $\{X_i\boxtimes Y_j\}_{i\in I, j\in J}$, where $\{X_i\}_{i\in I}$ and $\{Y_j\}_{j\in J}$ are complete sets of irreducible objects in $\mathscr{C}$ and $\mathscr{D}$, respectively.
		\end{rem}

	Next, given a torsion action $(B, \beta)$ of $\mathbb{H}$, consider  the corresponding torsion $\mathscr{R}$ep$(\mathbb{H})$-module C$^*$-category $\mathscr{N}_{\beta}$ as explained at the end of Section \ref{sec.DiscussionAboutKennyMakoto}. We want to show that the induced category $\mbox{\text{Ind}}^{\mathscr{Rep}(\mathbb{G})}_{\mathscr{Rep}(\mathbb{H})}(\mathscr{N}_\beta)$ is torsion as soon as $\widehat{\mathbb{H}}$ is divisible in $\widehat{\mathbb{G}}$. It follows from the following proposition, which shows that the divisibility property extends to the level of the representation categories.
	
	\begin{pro}\label{pro.DivisibilityRepCat}
		Let $\mathbb{G}$ and $\mathbb{H}$ be two compact quantum groups such that $\widehat{\mathbb{H}}<\widehat{\mathbb{G}}$. If $\widehat{\mathbb{H}}$ is divisible in $\widehat{\mathbb{G}}$, then $\mathscr{Rep}(\mathbb{G})\cong \underset{\Omega}{\bigoplus}\mathscr{Rep}(\mathbb{H})$ as $\mathscr{Rep}(\mathbb{H})$-module C$^*$-categories.
	\end{pro}
	\begin{proof}
		Since $\widehat{\mathbb{H}}$ is divisible in $\widehat{\mathbb{G}}$ by assumption, then we have $\text{R}(\mathbb{G})\cong\underset{\Omega}{\bigoplus} \text{R}(\mathbb{H})$ as based $\text{R}(\mathbb{H})$-modules. For each $t\in\Omega$, we denote by $\mathbbold{1}_t\in \text{R}(\mathbb{G})$ the copy of $\mathbbold{1}_{\text{R}(\mathbb{H})}=\epsilon$ at position $t\in \Omega$ in the decomposition $\text{R}(\mathbb{G})\cong \underset{\Omega}{\bigoplus} \text{R}(\mathbb{H})$. Next, given $t\in\Omega$ denote by $\mathscr{Rep}(\mathbb{G})_t$ the $\mathscr{Rep}(\mathbb{H})$-module subcategory of $\mathscr{Rep}(\mathbb{G})$ generated by $\mathbbold{1}_t$. It is clear that $\mbox{\text{Fus}}(\mathscr{Rep}(\mathbb{G})_t)=\text{R}(\mathbb{H})$ as based $\text{R}(\mathbb{H})$-modules, which implies that $\mathscr{Rep}(\mathbb{G})_t\cong \mathscr{Rep}(\mathbb{H})$ as $\mathscr{Rep}(\mathbb{H})$-module C$^*$-categories by virtue of \cite[Lemma 3.10]{YukiKenny}. Since this is true for all $t\in\Omega$, we deduce that $\mathscr{Rep}(\mathbb{G})$ can be viewed as a direct sum of $\mathscr{Rep}(\mathbb{H})$-module C$^*$-categories as in the statement.
	\end{proof}
	
	\begin{cor}\label{cor.TorsionInducedCateg}
		Let $\mathbb{G}$ and $\mathbb{H}$ be two compact quantum groups such that $\widehat{\mathbb{H}}<\widehat{\mathbb{G}}$. If $\widehat{\mathbb{H}}$ is divisible in $\widehat{\mathbb{G}}$, then we have:
		$$\text{Ind}^{\text{R}(\mathbb{G})}_{\text{R}(\mathbb{H})}(\mbox{\text{Fus}}(\mathscr{N}_\beta))\cong \mbox{\text{Fus}}\Big(\text{Ind}^{\mathscr{Rep}(\mathbb{G})}_{\mathscr{Rep}(\mathbb{H})}(\mathscr{N}_\beta)\Big),$$
		as based $\text{R}(\mathbb{G})$-modules, for all torsion action $(B,\beta)$ of $\mathbb{H}$. Therefore, $\text{Ind}^{\mathscr{Rep}(\mathbb{G})}_{\mathscr{Rep}(\mathbb{H})}(\mathscr{N}_\beta)$ is a torsion $\mathscr{Rep}(\mathbb{G})$-module C$^*$-category (hence semi-simple), for all torsion action $(B,\beta)$ of $\mathbb{H}$.
	\end{cor}
	\begin{proof}
		On the one hand, by virtue of Lemma \ref{lem.InducedTorsionModules}, the induced $\text{R}(\mathbb{G})$-module $\text{Ind}^{\text{R}(\mathbb{G})}_{\text{R}(\mathbb{H})}(\mbox{\text{Fus}}(\mathscr{N}_\beta))=\text{R}(\mathbb{G})\underset{\text{R}(\mathbb{H})}{\odot} \text{Fus}(\mathscr{N}_\beta)$ is a (left) based $\text{R}(\mathbb{G})$-module with basis $\widetilde{J}:=\{\mathbbold{1}_t\odot j\ |\ t\in\Omega, (Y, \delta_Y)\in J\}$, where $\mathbbold{1}_t\in \text{Irr}(\mathbb{G})$ denotes the copy of $\mathbbold{1}_{\text{R}(\mathbb{H})}$ at position $t\in \Omega$ and $J:=\text{Irr}(\mathscr{N}_\beta)$. Moreover, $\text{Ind}^{\text{R}(\mathbb{G})}_{\text{R}(\mathbb{H})}(\mbox{\text{Fus}}(\mathscr{N}_\beta))$ is a torsion $\text{R}(\mathbb{G})$-module because $\text{Fus}(\mathscr{N}_\beta)$ is a torsion $\text{R}(\mathbb{H})$-module. On the other hand, by virtue of the isomorphism $\mathscr{Rep}(\mathbb{G})\cong \underset{\Omega}{\bigoplus}\mathscr{Rep}(\mathbb{H})$ as $\mathscr{Rep}(\mathbb{H})$-module C$^*$-categories from Proposition \ref{pro.DivisibilityRepCat}, the set $\widetilde{J}$ provides a complete set of irreducible objects in $\text{Ind}^{\mathscr{Rep}(\mathbb{G})}_{\mathscr{Rep}(\mathbb{H})}(\mathscr{N}_\beta)=\mathscr{Rep}(\mathbb{G})\underset{\mathscr{Rep}(\mathbb{H})}{\boxtimes} \mathscr{N}_{\beta}$. It follows that $\text{Ind}^{\text{R}(\mathbb{G})}_{\text{R}(\mathbb{H})}(\mbox{\text{Fus}}(\mathscr{N}_\beta))\cong \mbox{\text{Fus}}\Big(\text{Ind}^{\mathscr{Rep}(\mathbb{G})}_{\mathscr{Rep}(\mathbb{H})}(\mathscr{N}_\beta)\Big)$ as based $\text{R}(\mathbb{G})$-modules, for all torsion action $(B,\beta)$ of $\mathbb{H}$
	\end{proof}
	\begin{rem}
		Note that the previous corollary remains true for ergodic actions of $\mathbb{H}$ (not necessarily finite dimensional).
	\end{rem}
	
	\begin{theo}\label{theo.TorsionFreenessDivisible}
		Let $\mathbb{G}$ and $\mathbb{H}$ be two compact quantum groups such that $\widehat{\mathbb{H}}<\widehat{\mathbb{G}}$ is divisible. If $\widehat{\mathbb{G}}$ is torsion-free, then $\widehat{\mathbb{H}}$ is torsion-free.
	\end{theo}
	\begin{proof}
		Let $(B, \beta)$ be a torsion action of $\mathbb{H}$ and consider the corresponding torsion $\mathscr{R}$ep$(\mathbb{H})$-module C$^*$-category $\mathscr{N}_{\beta}$. Put $\mathscr{M}:=\text{Ind}^{\mathscr{Rep}(\mathbb{G})}_{\mathscr{Rep}(\mathbb{H})}(\mathscr{N}_\beta)$ for the induced C$^*$-module category. By Corollary \ref{cor.TorsionInducedCateg}, $\mathscr{M}$ is a torsion $\mathscr{Rep}(\mathbb{G})$-module C$^*$-category. Since $\widehat{\mathbb{G}}$ is torsion-free by hypothesis, we have that $\mathscr{M}\cong \mathscr{Rep}(\mathbb{G})$ as $\mathscr{Rep}(\mathbb{G})$-module C$^*$-categories, which is equivalent to say that $\mbox{\text{Fus}}(\mathscr{M})\cong \text{R}(\mathbb{G})$ as based $\text{R}(\mathbb{G})$-modules (recall Theorem \ref{theo.TorsionFreeYukiKenny}). 
		
		Corollary \ref{cor.TorsionInducedCateg} gives as well that $\text{Fus}(\mathscr{M})\cong \text{Ind}^{\text{R}(\mathbb{G})}_{\text{R}(\mathbb{H})}(\text{Fus}(\mathscr{N}_\beta))=\text{R}(\mathbb{G})\underset{\text{R}(\mathbb{H})}{\odot} \text{Fus}(\mathscr{N}_\beta)$ as based $\text{R}(\mathbb{G})$-modules. Next, since $\widehat{\mathbb{H}}$ is divisible in $\widehat{\mathbb{G}}$ by assumption, then we have $\text{R}(\mathbb{G})\cong\underset{\Omega}{\bigoplus} \text{R}(\mathbb{H})$ as based $\text{R}(\mathbb{H})$-modules. In this situation, we can apply verbatim the proof of \cite[Proposition 1.28]{YukiKenny} to show that $\text{Fus}(\mathscr{N}_{\beta})\cong \text{R}(\mathbb{H})$ as $\text{R}(\mathbb{H})$-modules, which yields $\mathscr{N}_{\beta}\cong \mathscr{Rep}(\mathbb{H})$ thanks to Theorem \ref{theo.TorsionFreeYukiKenny} again. Since this is true for all torsion action $\beta$ of $\mathbb{H}$, this will imply that $\widehat{\mathbb{H}}$ is torsion-free. 
		
		Namely, if $J$ denotes the basis for the torsion $\text{R}(\mathbb{H})$-module $\text{Fus}(\mathscr{N}_{\beta})$, then the basis for $\text{Ind}^{\text{R}(\mathbb{G})}_{\text{R}(\mathbb{H})}\Big(\text{Fus}(\mathscr{N}_\beta)\Big)$ taking into account the divisibility condition is given by $\widetilde{J}:=\{\alpha\otimes j\ |\ \alpha\in I_0\mbox{ and } j\in J\}\mbox{,}$ where $I_0$ denotes the set of elements in $\text{Irr}(\mathbb{G})$ corresponding to the units of the components $\text{R}(\mathbb{H})$ in the decomposition $\text{R}(\mathbb{G})=\oplus \text{R}(\mathbb{H})$ (cf. Lemma \ref{lem.InducedTorsionModules}). By the above discussion we have $\text{Ind}^{\text{R}(\mathbb{G})}_{\text{R}(\mathbb{H})}\Big(\text{Fus}(\mathscr{N}_\beta)\Big)\cong \text{R}(\mathbb{G})$ as based $\text{R}(\mathbb{G})$-modules. Hence, take $\alpha_0\otimes j_0\in \widetilde{J}$ corresponding to $\epsilon\in \text{Irr}(\mathbb{G})$. Then $\alpha\otimes (\alpha_0\otimes j_0)\in \widetilde{J}$, for each $\alpha\in \text{Irr}(\mathbb{G})$ because the isomorphism $\text{Ind}^{\text{R}(\mathbb{G})}_{\text{R}(\mathbb{H})}\Big(\text{Fus}(\mathscr{N}_\beta)\Big)\cong \text{R}(\mathbb{G})$ sends basis to basis. In particular, $\alpha\otimes \alpha_0\in \text{Irr}(\mathbb{G})$, for each $\alpha\in \text{Irr}(\mathbb{G})$ and so $\overline{\alpha}_0\otimes \alpha_0= \epsilon$. We may thus assume $\alpha_0=\epsilon$ and we find that $\text{Fus}(\mathscr{N}_{\beta})\cong \text{R}(\mathbb{H})\underset{\text{R}(\mathbb{H})}{\odot} \text{Fus}(\mathscr{N}_{\beta})\cong \text{R}(\mathbb{H})$ as based $\text{R}(\mathbb{H})$-modules as we wanted to show.		
	\end{proof}

	\subsection{Relation between induced torsion actions and induced torsion module C$^*$-categories}
		In this section we show that the induction at the level of torsion actions corresponds precisely to induction at the level of torsion modules C$^*$-categories under the divisibility assumption.
		
		Let $\mathbb{G}$ and $\mathbb{H}$ be compact quantum groups such that $\widehat{\mathbb{H}}<\widehat{\mathbb{G}}$. Given a torsion action $(B, \beta)$ of $\mathbb{H}$, consider  the induced torsion action of $\mathbb{G}$, $(\text{Ind}^{\mathbb{G}}_{\mathbb{H}}(B), \widetilde{\beta})$, where $\text{Ind}^{\mathbb{G}}_{\mathbb{H}}(B)$ is the same C$^*$-algebra $B$ equipped with the composition $\widetilde{\beta}:=(id_B\otimes \iota)\circ\beta$ as action of $\mathbb{G}$, where $C(\mathbb{H})\overset{\iota}{\subset} C(\mathbb{G})$ is the canonical embedding defining $\widehat{\mathbb{H}}<\widehat{\mathbb{G}}$. 
		
		On the one hand, we can consider the torsion $\mathscr{R}$ep$(\mathbb{H})$-module C$^*$-category $\mathscr{N}_{\beta}$ associated to $\beta$ and the corresponding torsion $\text{R}(\mathbb{H})$-module, $\text{Fus}(\mathscr{N}_{\beta})$. On the other hand, we can consider the torsion $\mathscr{R}$ep$(\mathbb{G})$-module C$^*$-category $\mathscr{M}_{\widetilde{\beta}}$ associated to $\widetilde{\beta}$ and the corresponding torsion $\text{R}(\mathbb{G})$-module, $\text{Fus}(\mathscr{M}_{\widetilde{\beta}})$. We want to show that $\text{Ind}^{\mathscr{Rep}(\mathbb{G})}_{\mathscr{Rep}(\mathbb{H})}(\mathscr{N}_\beta)\cong \mathscr{M}_{\widetilde{\beta}}$ as soon as $\widehat{\mathbb{H}}$ is divisible in $\widehat{\mathbb{G}}$. Note that if $(B, \beta)$ is the trivial action of $\mathbb{H}$, then $\mathscr{N}_\beta\cong \mathscr{Rep}(\mathbb{H})$, $\mathscr{M}_{\widetilde{\beta}}\cong \mathscr{Rep}(\mathbb{G})$ and the property is obviously satisfied. We assume then that $(B, \beta)$ is a non-trivial torsion action of $\mathbb{H}$.
		
		In order to fix our notations, recall that $\text{Fus}(\mathscr{N}_\beta)$ is by definitoin the free $\mathbb{Z}$-module with basis $J:=\text{Irr}(\mathscr{N}_{\beta})$, where $\text{Irr}(\mathscr{N}_{\beta})$ is formed by $\mathbb{H}$-equivariant Hilbert $B$-submodules of $H_y\otimes B$ with $y\in \text{Irr}(\mathbb{H})$ (cf. Remark \ref{rem.IrrCatHilbMod}). Similarly, $\text{Fus}(\mathscr{M}_{\widetilde{\beta}})$ is by definition the free $\mathbb{Z}$-module with basis $J':=\mbox{\text{Irr}}(\mathscr{M}_{\widetilde{\beta}})$, where $\text{Irr}(\mathscr{M}_{\widetilde{\beta}})$ is formed by $\mathbb{G}$-equivariant Hilbert $\text{Ind}^{\mathbb{G}}_{\mathbb{H}}(B)$-submodules of $H_x\otimes \text{Ind}^{\mathbb{G}}_{\mathbb{H}}(B)$ with $x\in \mbox{\text{Irr}}(\mathbb{G})$.
		
		\begin{lem}\label{lem.NBetaFullSubMBetaTilde}
			We keep the previous notations. $\mathscr{N}_{\beta}$ is a full $\mathscr{Rep}(\mathbb{H})$-module C$^*$-subcategory of $\mathscr{M}_{\widetilde{\beta}}$.
		\end{lem}
		\begin{proof}
		
		By definition, $\mathscr{M}_{\widetilde{\beta}}$ is the $\mathscr{R}$ep$(\mathbb{G})$-module C$^*$-category whose objects are the $\mathbb{G}$-equivariant Hilbert $\text{Ind}^{\mathbb{G}}_{\mathbb{H}}(B)$-modules. 
		
		On the one hand, note that given a $\mathbb{H}$-equivariant Hilbert $B$-module $(E,\delta_E)$, the modules of the form $(\text{Ind}^{\mathbb{G}}_{\mathbb{H}}(E), \widetilde{\delta}_E)$ where $\text{Ind}^{\mathbb{G}}_{\mathbb{H}}(E):=E$ equipped with the composition $\widetilde{\delta}_E:=(id_E\otimes \iota)\circ\delta_E$ as action of $\mathbb{G}$ are in $\mathscr{M}_{\widetilde{\beta}}$. Indeed, given an $\mathbb{H}$-equivariant Hilbert $B$-module $(E,\delta_E)$, let $V_E\in \mathcal{L}_{B\otimes C(\mathbb{H})}\big(E\underset{\beta}{\otimes} (B\otimes C(\mathbb{H})), B\otimes C(\mathbb{H})\big)$ be the admissible operator associated to the action $\delta_E$ of $\mathbb{H}$ on $E$ (cf. Remark \ref{rem.AdmissibleUnitary}). Denote by $\widetilde{V}_E\in \mathcal{L}_{B\otimes \iota(C(\mathbb{H}))}\big(E\underset{\widetilde{\beta}}{\otimes} (B\otimes \iota(C(\mathbb{H}))), B\otimes \iota(C(\mathbb{H}))\big)$ the admissible unitary associated to the action $\widetilde{\delta}_E$ of $\mathbb{G}$ on $E$. We have an obvious commutative diagram:
		$$
			\xymatrix{
				\mbox{$E\underset{\beta}{\otimes} (B\otimes C(\mathbb{H}))$}\ar[d]_{\mbox{$id_E\underset{\beta}{\otimes} (id_B\otimes \iota)$}}\ar[r]^{\mbox{$V_E$}}&\mbox{$B\otimes C(\mathbb{H})$}\ar[d]^{\mbox{$id_B\otimes \iota$}}\\
				\mbox{$E\underset{\widetilde{\beta}}{\otimes} (B\otimes \iota(C(\mathbb{H})))$}\ar[r]_{\mbox{$\widetilde{V}_E$}}&\mbox{$B\otimes \iota(C(\mathbb{H}))$}&}
		$$
		which allows to write the following:
		$$\widetilde{\delta}_E(\xi)(\iota (x))=\widetilde{V}_E(\xi\underset{\widetilde{\beta}}{\otimes} \iota(x))=\widetilde{V}_E\circ (id_E\underset{\beta}{\otimes} (id_B\otimes \iota))(\xi\underset{\beta}{\otimes} x)=(id_B\otimes \iota)\circ V_E(\xi\underset{\beta}{\otimes} x)=(id_B\otimes \iota)\circ\delta_E(\xi)(x)\mbox{,}$$
		for all $\xi\in E$ and $x\in B\otimes C(\mathbb{H})$. It is thus straightforward to check that $(\text{Ind}^{\mathbb{G}}_{\mathbb{H}}(H), \widetilde{\delta}_H)\in Obj(\mathscr{M}_{\widetilde{\beta}})$. 
		
		On the other hand,  observe that if $(Y,\delta_Y)\in J$, then the $\mathbb{G}$-equivariant Hilbert $\text{Ind}^{\mathbb{G}}_{\mathbb{H}}(B)$-module $(\text{Ind}^{\mathbb{G}}_{\mathbb{H}}(Y), \widetilde{\delta}_Y)$ is again irreducible in $\mathscr{M}_{\widetilde{\beta}}$. Namely, we have $\mathcal{L}_{\mathbb{G}}\big(\text{Ind}^{\mathbb{G}}_{\mathbb{H}}(Y)\big)\subset \mathcal{L}_{\mathbb{H}}\big(Y\big)=\mathbb{C}\cdot 1$ because for every $T\in \mathcal{L}_{\mathbb{G}}\big(\text{Ind}^{\mathbb{G}}_{\mathbb{H}}(Y)\big)$ and every $\xi\in Y$ we write: 
		$$(id_Y\otimes \iota)\delta_Y(T(\xi))=\widetilde{\delta}_Y(T(\xi))=(T\otimes id)\widetilde{\delta}_Y(\xi)=(T\otimes id)(id_Y\otimes \iota)\delta_Y(\xi)=(id_Y\otimes \iota)(T\otimes id)\delta_Y(\xi),$$
		which implies that $\delta_Y(T(\xi))=(T\otimes id)\delta_Y(\xi)$ for all $\xi\in Y$, that is, $T\in \mathcal{L}_{\mathbb{H}}\big(Y\big)$. In other words, $J\subset J'$. 
		
		Since both  $\mathscr{N}_\beta$ and $\mathscr{M}_{\widetilde{\beta}}$ are semi-simple because they are constructed from ergodic actions of $\mathbb{G}$, the previous discussion yields a fully faithful functor between $\mathscr{N}_\beta$ and $\mathscr{M}_{\widetilde{\beta}}$, so that we view $\mathscr{N}_\beta$ as a (full) $\mathscr{R}$ep$(\mathbb{H})$-module C$^*$-subcategory of $\mathscr{M}_{\widetilde{\beta}}$.
		\end{proof}
		
		\begin{lem}\label{lem.NBetaFullSubIndCat}
			We keep the previous notations. $\mathscr{N}_{\beta}$ is a full $\mathscr{Rep}(\mathbb{H})$-module C$^*$-subcategory of $\text{Ind}^{\mathscr{Rep}(\mathbb{G})}_{\mathscr{Rep}(\mathbb{H})}(\mathscr{N}_\beta)$.	
		\end{lem}
		\begin{proof}
			 The association $(E, \delta_E)\mapsto \epsilon\boxtimes (E,\delta_E)$ from $\mathscr{N}_\beta$ to $\mathscr{Rep}(\mathbb{G})\boxtimes \mathscr{N}_\beta$ defines a C$^*$-functor which induces a fully faithful $\mathscr{Rep}(\mathbb{H})$-module functor between $\mathscr{N}_\beta$ and $\mathscr{Rep}(\mathbb{G})\underset{\mathscr{Rep}(\mathbb{H})}{\boxtimes} \mathscr{N}_\beta$, so that $\mathscr{N}_\beta$ is viewed as a (full) $\mathscr{R}$ep$(\mathbb{H})$-module C$^*$-subcategory of $\text{Ind}^{\mathscr{Rep}(\mathbb{G})}_{\mathscr{Rep}(\mathbb{H})}(\mathscr{N}_\beta)=\mathscr{Rep}(\mathbb{G})\underset{\mathscr{Rep}(\mathbb{H})}{\boxtimes} \mathscr{N}_\beta$.
		\end{proof}
		
		If $\widehat{\mathbb{H}}$ is divisible in $\widehat{\mathbb{G}}$, then $\text{R}(\mathbb{G})\cong \underset{\Omega}{\bigoplus} \text{R}(\mathbb{H})$ as (right) based $\text{R}(\mathbb{H})$-modules and Lemma \ref{lem.InducedTorsionModules} assures that $\text{Ind}^{\text{R}(\mathbb{G})}_{\text{R}(\mathbb{H})}\Big(\text{Fus}(\mathscr{N}_\beta)\Big)=\text{R}(\mathbb{G})\underset{\text{R}(\mathbb{H})}{\odot}\text{Fus}(\mathscr{N}_\beta)$ is a based $\text{R}(\mathbb{G})$-module with basis $\widetilde{J}:=\{\mathbbold{1}_t\odot (Y, \delta_Y) \ |\ t\in \Omega, (Y,\delta_Y)\in J\}$. Here, $\mathbbold{1}_t\in \text{R}(\mathbb{G})$ denotes the copy of $\mathbbold{1}_{\text{R}(\mathbb{H})}=\epsilon$ at position $t\in \Omega$ in the decomposition $\text{R}(\mathbb{G})\cong \underset{\Omega}{\bigoplus} \text{R}(\mathbb{H})$. Recall from Remark \ref{rem.DivisibilityDecomp} that $\epsilon=\mathbbold{1}_{t_0}$. So $J$ can be embedded into $\widetilde{J}$ by sending $(Y, \delta_Y)$ to $\epsilon\odot (Y, \delta_Y)$.
		
		By Corollary \ref{cor.TorsionInducedCateg} we know that, if $\widehat{\mathbb{H}}$ is divisible in $\widehat{\mathbb{G}}$, then $\text{Ind}^{\mathscr{Rep}(\mathbb{G})}_{\mathscr{Rep}(\mathbb{H})}(\mathscr{N}_\beta)$ is a torsion $\mathscr{Rep}(\mathbb{G})$-module C$^*$-category (hence semi-simple). As such it is attached, as explained at the end of Section \ref{sec.DiscussionAboutKennyMakoto}, to a torsion action of $\mathbb{G}$, say $(A,\delta)$. In other words, we have $\text{Ind}^{\mathscr{Rep}(\mathbb{G})}_{\mathscr{Rep}(\mathbb{H})}(\mathscr{N}_\beta)\cong \mathscr{M}_{\delta}$. We want to show that $\mathscr{M}_{\delta}\cong \mathscr{M}_{\widetilde{\beta}}$ as $\mathscr{Rep}(\mathbb{G})$-module C$^*$-categories as soon as $\widehat{\mathbb{H}}$ is divisible in $\widehat{\mathbb{G}}$.
		
		On the one hand, by Lemma \ref{lem.NBetaFullSubMBetaTilde} we view $\mathscr{N}_\beta$ as a (full) $\mathscr{R}$ep$(\mathbb{H})$-module C$^*$-subcategory of $\mathscr{M}_{\widetilde{\beta}}$. In particular, we identify $(B,\beta)\in \text{Irr}(\mathscr{N}_\beta)$ to $(\text{Ind}^{\mathbb{G}}_{\mathbb{H}}(B), \widetilde{\beta})\in \text{Irr}(\mathscr{M}_{\widetilde{\beta}})$ inside $\mathscr{M}_{\widetilde{\beta}}$. Let us denote by $B_{\widetilde{\beta}}$ the object $(\text{Ind}^{\mathbb{G}}_{\mathbb{H}}(B), \widetilde{\beta})$ as an (irreducible) object in the category $\mathscr{M}_{\widetilde{\beta}}$. On the other hand, by Lemma \ref{lem.NBetaFullSubIndCat} we view $\mathscr{N}_{\beta}$ as a (full) $\mathscr{Rep}(\mathbb{H})$-module C$^*$-subcategory of $\text{Ind}^{\mathscr{Rep}(\mathbb{G})}_{\mathscr{Rep}(\mathbb{H})}(\mathscr{N}_\beta)$. In particular, we identify $(B,\beta)\in \text{Irr}(\mathscr{N}_\beta)$ to $\epsilon\odot (B,\beta)\in \widetilde{J}\cong \text{Irr}\Big(\text{Ind}^{\mathscr{Rep}(\mathbb{G})}_{\mathscr{Rep}(\mathbb{H})}(\mathscr{N}_\beta)\Big)$ inside $\mathscr{Rep}(\mathbb{G})\underset{\mathscr{Rep}(\mathbb{H})}{\boxtimes} \mathscr{N}_\beta$. Let us denote by $B_{\text{Ind}}$ the object $\epsilon\odot (B,\beta)$ as an (irreducible) object in the category $\text{Ind}^{\mathscr{Rep}(\mathbb{G})}_{\mathscr{Rep}(\mathbb{H})}(\mathscr{N}_\beta)\cong \mathscr{M}_{\delta}$. 
		
		\begin{lem}\label{lem.IsomAlgObjCat}
			We keep the previous notations and those from the end of Section \ref{sec.DiscussionAboutKennyMakoto}. $(\mathcal{B}^{B_{\text{Ind}}}_{B_{\text{Ind}}}, \delta_{B_{\text{Ind}}, \mathscr{M}_\delta})\cong (\mathcal{B}^{B_{\widetilde{\beta}}}_{B_{\widetilde{\beta}}}, \delta_{B_{\widetilde{\beta}}, \mathscr{M}_{\widetilde{\beta}}})$ as soon as $\widehat{\mathbb{H}}$ is divisible in $\widehat{\mathbb{G}}$.
		\end{lem}
		\begin{proof}
			Assume that $\widehat{\mathbb{H}}$ is divisible in $\widehat{\mathbb{G}}$. First of all, we claim that we have an isomorphism between the homomorphism spaces:
			\begin{equation}\label{eq.IdentificationAlgbActions}
		\begin{split}
			\text{Hom}_{\mathscr{M}_\delta}(u^x\bullet B_{\text{Ind}}, B_{\text{Ind}})\cong \text{Hom}_{\mathscr{M}_{\widetilde{\beta}}}(u^x\bullet B_{\widetilde{\beta}}, B_{\widetilde{\beta}}),
		\end{split}
		\end{equation}
		for all $x\in \text{Irr}(\mathbb{G})$. 
		
		On the one hand, observe that, by construction, $\text{Stab}(B_{\text{Ind}})$ in the induced category can only be formed by irreducible $\mathbb{H}$-representations. Indeed, this follows from Lemma \ref{lem.StabilisersDivisibleInd} since $B_{\text{Ind}}=\epsilon\odot (B,\beta)\in \widetilde{J}\cong \text{Irr}\Big(\text{Ind}^{\mathscr{Rep}(\mathbb{G})}_{\mathscr{Rep}(\mathbb{H})}(\mathscr{N}_\beta)\Big)$. On the other hand, $\text{Stab}(B_{\widetilde{\beta}})$ in $\mathscr{M}_{\widetilde{\beta}}$ can only be formed by irreducible $\mathbb{H}$-representations too. The argument for this is similar to the one for Lemma \ref{lem.StabilisersDivisibleInd}. Namely, by definition we have:
		$$\text{Stab}(B_{\widetilde{\beta}})=\{x\in \text{Irr}(\mathbb{G})\ |\ B_{\widetilde{\beta}}\subset H_x\otimes B_{\widetilde{\beta}}\}.$$
		
		Notice that $B_{\widetilde{\beta}}\cong H_{\epsilon}\otimes B$ as the irreducible object $(B, \beta)$ inside $\mathscr{M}_{\widetilde{\beta}}$. Given $x\in \text{Irr}(\mathbb{G})$ and using the divisibility assumption, one can write $x\cong t_x\otimes y_x$, for some $t_x\in\text{Irr}(\mathbb{G})$ and $y_x\in\text{Irr}(\mathbb{H})$ (cf. Remark \ref{rem.DivisibilityDecomp}). Hence, $H_x\otimes B_{\widetilde{\beta}}\cong (H_{t_x}\otimes H_{y_x})\otimes B\cong H_{t_x}\otimes (H_{y_x}\otimes B)$. Next, we decompose the object $H_{y_x}\otimes B\in \text{Obj}(\mathscr{N_\beta})$ as a direct sum of irreducibles inside $\mathscr{N_\beta}$, say $H_{y_x}\otimes B\cong \underset{k\in I(y_x, B)}{\bigoplus} Y_k$, where $\{(Y_{k}, \delta_{Y_{k}})\}_{k\in I(y_x, B)}\subset J=\text{Irr}(\mathscr{N}_{\beta})$ and $I(y_x, B)$ denotes the index set for the irreducible objects in $\mathscr{N_\beta}$ appearing in the decomposition of $H_{y_x}\otimes B$ (in particular, $Y_k\neq 0$, for all $k\in I(y_x, B)$). Hence, $H_{t_x}\otimes (H_{y_x}\otimes B)\cong \underset{k\in I(y_x, B)}{\bigoplus} H_{t_x}\otimes Y_k$. Accordingly, $x\in \text{Stab}(B_{\widetilde{\beta}})$ if and only if $H_{\epsilon}\otimes B\subset \underset{k\in I(y_x, B)}{\bigoplus} H_{t_x}\otimes Y_k$. Since $H_{\epsilon}\otimes B$ is irreducible, it must be $H_{\epsilon}\otimes B\subset H_{t_x}\otimes Y_{k_0}$, for some $k_0\in I(y_x, B)$. By Frobenius reciprocity, this is equivalent to say that $Y_{k_0}\subset \overline{H_{t_x}}\otimes B$. However, $(Y_{k_0}, \delta_{Y_{k_0}})\in J$, which means that it arises as a submodule of $H_{y_0}\otimes B$, for some $y_0\in \text{Irr}(\mathbb{H})$. In other words, we have that $Y_{k_0}\subset (\overline{H_{t_x}}\otimes B)\cap (H_{y_0}\otimes B)=(\overline{H_{t_x}}\cap H_{y_0})\otimes B$. Since both $t_x$ and $y_0$ are irreducible representations, one has $\overline{H_{t_x}}\cap H_{y_0}=0$ if $t_x\neq y_0$. In this case, we would have $Y_{k_0}=0$, which is not possible by our choice of $k_0$. Consequently, it must be $t_x= y_0\in \text{Irr}(\mathbb{H})$. It follows that $t_x=t_0=\epsilon$ (cf. Remark \ref{rem.DivisibilityDecomp}). This implies that $x\cong \epsilon\otimes y_x=y_x\in\text{Irr}(\mathbb{H})$, which yields the claim.\footnote{Similarly as in Remark \ref{rem.StabilisersDivisibleInd}, this argument shows that $\text{Stab}(B_{\widetilde{\beta}})=\text{Stab}(B)$.}
		 
		Therefore, our claim comes down to show the isomorphism (\ref{eq.IdentificationAlgbActions}) above for all $y\in\text{Irr}(\mathbb{H})$, which is straightforward. In other words, $\text{Hom}_{\mathscr{M}_\delta}(u^x\bullet B_{\text{Ind}}, B_{\text{Ind}})$ reduces to $\text{Hom}_{\mathscr{N}_\beta}(u^y\bullet B, B)$ inside $\mathscr{M}_{\delta}$ and $\text{Hom}_{\mathscr{M}_{\widetilde{\beta}}}(u^x\bullet B_{\widetilde{\beta}}, B_{\widetilde{\beta}})$ reduces to $\text{Hom}_{\mathscr{N}_{\beta}}(u^y\bullet B, B)$ inside $\mathscr{M}_{\widetilde{\beta}}$. Consequently, the $*$-algebra $\mathcal{B}^{B_{\text{Ind}}}_{B_{\text{Ind}}}$ reduces to $\mathcal{B}^{B}_{B}$ inside $\mathscr{M}_{\delta}$ and the $*$-algebra $\mathcal{B}^{B_{\widetilde{\beta}}}_{B_{\widetilde{\beta}}}$ reduces to $\mathcal{B}^{B}_{B}$ inside $\mathscr{M}_{\widetilde{\beta}}$ together with the corresponding $\mathbb{G}$-actions, which yields the claim.
		\end{proof}

	In conclusion, we obtain the following:
	\begin{theo}\label{theo.FusionModulesInducedAction}
		Let $\mathbb{G}$ and $\mathbb{H}$ be two compact quantum groups such that $\widehat{\mathbb{H}}<\widehat{\mathbb{G}}$. If $\widehat{\mathbb{H}}$ is divisible in $\widehat{\mathbb{G}}$, then we have:
		$$\text{Ind}^{\mathscr{Rep}(\mathbb{G})}_{\mathscr{Rep}(\mathbb{H})}(\mathscr{N}_\beta)\cong \mathscr{M}_{\widetilde{\beta}},$$
		
		as $\mathscr{R}$ep$(\mathbb{G})$-module C$^*$-categories, for all torsion action $(B, \beta)$ of $\mathbb{H}$. In particular, $\text{Ind}^{\text{R}(\mathbb{G})}_{\text{R}(\mathbb{H})}\Big(\text{Fus}(\mathscr{N}_\beta)\Big)\cong \text{Fus}\big(\mathscr{M}_{\widetilde{\beta}}\big)$ as based $\text{R}(\mathbb{G})$-modules, for all torsion action $(B, \beta)$ of $\mathbb{H}$.
	\end{theo}
	\begin{proof}
		By virtue of Lemma \ref{lem.IsomAlgObjCat}, one has $(\mathcal{B}^{B_{\text{Ind}}}_{B_{\text{Ind}}}, \delta_{B_{\text{Ind}}, \mathscr{M}_\delta})\cong (\mathcal{B}^{B_{\widetilde{\beta}}}_{B_{\widetilde{\beta}}}, \delta_{B_{\widetilde{\beta}}, \mathscr{M}_{\widetilde{\beta}}})$ as soon as $\widehat{\mathbb{H}}$ is divisible in $\widehat{\mathbb{G}}$, for all torsion action $(B, \beta)$ of $\mathbb{H}$. According to the one-to-one correspondence between ergodic actions of $\mathbb{G}$ and connected $\mathscr{Rep}(\mathbb{G})$-module C$^*$-categories discussed at the end of Section \ref{sec.DiscussionAboutKennyMakoto}, one deduces that $(A, \delta) \underset{\mathbb{G}}{\sim}(B^{B_{\text{Ind}}}_{B_{\text{Ind}}}, \delta_{B_{\text{Ind}}, \mathscr{M}_\delta})\cong (B^{B_{\widetilde{\beta}}}_{B_{\widetilde{\beta}}}, \delta_{B_{\widetilde{\beta}}, \mathscr{M}_{\widetilde{\beta}}})\underset{\mathbb{G}}{\sim} (\text{Ind}^{\mathbb{G}}_{\mathbb{H}}(B), \widetilde{\beta})$, which implies $\mathscr{M}_{\delta}\cong \mathscr{M}_{\widetilde{\beta}}$ as we wanted to show. Finally, the identification $\text{Ind}^{\text{R}(\mathbb{G})}_{\text{R}(\mathbb{H})}\Big(\text{Fus}(\mathscr{N}_\beta)\Big)\cong \text{Fus}\big(\mathscr{M}_{\widetilde{\beta}}\big)$ follows from Corollary \ref{cor.TorsionInducedCateg}.
	\end{proof}
	\begin{rem}
		Note that the previous discussion and so the previous proposition remains valid for ergodic actions of $\mathbb{H}$ (not necessarily finite dimensional).
	\end{rem}
	
	This theorem allows to recover Theorem \ref{theo.TorsionFreenessDivisible} by adapting the same argument appearing there at the level of torsion modules associated to torsion actions.
	\begin{cor}\label{cor.}
		Let $\mathbb{G}$ and $\mathbb{H}$ be two compact quantum groups such that $\widehat{\mathbb{H}}<\widehat{\mathbb{G}}$ is divisible. If $\widehat{\mathbb{G}}$ is torsion-free, then $\widehat{\mathbb{H}}$ is torsion-free.
	\end{cor}
	
	In view of the examples discussed in the begining of this section, when the discrete quantum subgroup $\widehat{\mathbb{H}}$ is not divisible in $\widehat{\mathbb{G}}$, a non-trivial torsion action of $\mathbb{H}$ can be realized as the trivial torsion action of $\mathbb{G}$ even when $\widehat{\mathbb{G}}$ is torsion-free, which prevents $\widehat{\mathbb{H}}$ to be torsion-free. The previous proposition implies that, under the divisibility assumption, induction of torsion actions preserves non-trivial actions.
	\begin{cor}
		Let $\mathbb{G}$ and $\mathbb{H}$ be two compact quantum groups such that $\widehat{\mathbb{H}}<\widehat{\mathbb{G}}$ is divisible. If $(B, \beta)$ is a non-trivial torsion action of $\mathbb{H}$, then $(\text{Ind}^{\mathbb{G}}_{\mathbb{H}}(B), \widetilde{\beta})$ is a non-trivial torsion action of $\mathbb{G}$.
	\end{cor}
	\begin{proof}
		Let $(B, \beta)$ be a non-trivial torsion action of $\mathbb{H}$. Assume that $(\text{Ind}^{\mathbb{G}}_{\mathbb{H}}(B), \widetilde{\beta})$ is the trivial torsion action of $\mathbb{G}$. This means, according to the discussion at the end of Section \ref{sec.DiscussionAboutKennyMakoto}, that $\mathscr{M}_{\widetilde{\beta}}\cong \mathscr{Rep}(\mathbb{G})$. By Theorem \ref{theo.FusionModulesInducedAction} this implies that $\text{Ind}^{\mathscr{Rep}(\mathbb{G})}_{\mathscr{Rep}(\mathbb{H})}(\mathscr{N}_\beta)\cong \mathscr{Rep}(\mathbb{G})$ hence $\text{Ind}^{\text{R}(\mathbb{G})}_{\text{R}(\mathbb{H})}\Big(\text{Fus}(\mathscr{N}_\beta)\Big)\cong \text{R}(\mathbb{G})$. At this point, the same argument as the one of Theorem \ref{theo.TorsionFreenessDivisible} would yield $\text{Fus}(\mathscr{N}_\beta)\cong \text{R}(\mathbb{H})$ hence $\mathscr{N}_\beta\cong \mathscr{Rep}(\mathbb{H})$. This means, thanks again to the discussion at the end of Section \ref{sec.DiscussionAboutKennyMakoto}, that $(B,\beta)$ would be equivariantly Morita equivalent to the trivial torsion action of $\mathbb{H}$, which contradicts the choice of $\beta$. Therefore, $(\text{Ind}^{\mathbb{G}}_{\mathbb{H}}(B), \widetilde{\beta})$ must be non-trivial.
	\end{proof}

	\subsection{Some permanence properties of torsion-freeness}
	
	\begin{proSec}\label{pro.DivisibleConstructions}
		The following properties hold.
		\begin{enumerate}[i)]
			\item Let $\Gamma$ be a discrete group, $\mathbb{G}$ a compact quantum group. Assume that $\Gamma$ acts on $\mathbb{G}$ by quantum automorphisms, $\alpha$. Then both $\Gamma$ and $\widehat{\mathbb{G}}$ are divisible discrete quantum subgroups in $\widehat{\Gamma\underset{\alpha}{\ltimes}\mathbb{G}}$.
			\item Let $\Gamma$ be a discrete group, $G$ a compact group. Assume that $(\Gamma, G)$ is a matched pair with actions $\Gamma \overset{\alpha}{\curvearrowright} G$ and $\Gamma \overset{\beta}{\curvearrowleft} G$. Then $\widehat{G} \mbox{ is a divisible in } \widehat{\Gamma {_\alpha}\bowtie_{\beta}G}\Leftrightarrow \beta \mbox{ is trivial.}$ In this case, $\Gamma$ is also a divisible discrete quantum subgroup in $ \widehat{\Gamma {_\alpha}\bowtie_{\beta}G}$.
			\item Let $\mathbb{G}$ and $\mathbb{H}$ be two compact quantum groups. Then both $\widehat{\mathbb{G}}$ and $\widehat{\mathbb{H}}$ are divisible discrete quantum subgroups in $\widehat{\mathbb{G}\times \mathbb{H}}$.
			\item Let $\mathbb{G}$ and $\mathbb{H}$ be two compact quantum groups. Then both $\widehat{\mathbb{G}}$ and $\widehat{\mathbb{H}}$ are divisible discrete quantum subgroups in $\widehat{\mathbb{G}\ast \mathbb{H}}$. 
		\end{enumerate}
	\end{proSec}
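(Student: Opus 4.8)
The plan is to verify the divisibility of $R(\mathbb{H})$ (resp. $R(\mathbb{G})$) inside the representation ring of each of the four constructions by directly inspecting the fusion rules recalled in Section~\ref{sec.Background}, and exhibiting in each case an explicit family $\{l_\alpha\}$ of representatives realizing the based-module decomposition $R(\mathbb{F})\cong\bigoplus_\Omega R(\mathbb{H})$. Recall that, by the alternative characterization of divisibility recalled in Section~\ref{sec.Torsion}, it suffices to produce, for each class $\alpha$ in $Irr(\mathbb{F})/\!\sim$ (where $x\sim y$ iff $y\otop\overline{x}$ contains an irreducible of the subgroup), a representative $l_\alpha\in\alpha$ such that $s\otop l_\alpha$ is irreducible for all $s\in Irr(\mathbb{H})$ and $s\otop l_\alpha\cong s'\otop l_\alpha\Rightarrow s\cong s'$. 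Equivalently, one checks the $\widehat{\mathbb{H}}$-equivariant $*$-isomorphism $c_0(\widehat{\mathbb{F}})\cong c_0(\widehat{\mathbb{H}})\otimes c_0(\widehat{\mathbb{H}}\backslash\widehat{\mathbb{F}})$.

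\emph{First}, for the quantum semi-direct product $\mathbb{F}=\Gamma\underset{\alpha}{\ltimes}\mathbb{G}$: since $Irr(\mathbb{F})=\{(\gamma,x):\gamma\in\Gamma,\ x\in Irr(\mathbb{G})\}$ and the fusion formula $w^{y\otop y'}=v^{\gamma\gamma'}\otop(\cdots\otop v^{x'})$ shows that the $\mathbb{G}$-component is inert up to the twist $V_{\gamma',x}$, the set $\{(\gamma,\epsilon):\gamma\in\Gamma\}$ gives a section for $\widehat{\mathbb{G}}$ (each class $\sim$ collects all $(\gamma,x)$ with fixed $\gamma$), and one checks $w^{(e,x)}\otop w^{(\gamma,\epsilon)}=w^{(\gamma,\alpha_{\gamma^{-1}}(x))}$ is irreducible and injective in $x$ because $\alpha_{\gamma^{-1}}$ is a bijection of $Irr(\mathbb{G})$; symmetrically $\{(\epsilon,x):x\in Irr(\mathbb{G})\}$ works for $\Gamma$. \emph{Second}, for the bicrossed product $\mathbb{F}=\Gamma\,{_\alpha}\!\bowtie_\beta G$: Equation~(\ref{EqDim}) gives $\dim_{\mathbb{G}}(\gamma(u),\mu(v))=\delta_{\gamma\cdot G,\mu\cdot G}\dim_{G_\gamma}(u,v\circ\psi^\mu_{\gamma,\gamma})$, and the fusion rule~(\ref{eq.FusionBicrossed}) restricted to $\gamma_2=e$ gives $\dim_{\mathbb{G}}(\gamma_1(u),v\otop\gamma_3(w))$ with $v\in\mathscr{R}ep(G)$; for $\widehat{G}$ to be divisible we need the $\vert\gamma\cdot G\vert$-factors to be trivial, i.e. $G_\gamma=G$ for all $\gamma$, which is exactly $\beta$ trivial --- one proves the equivalence by noting that if some $\vert\gamma\cdot G\vert>1$ then $G\otop\gamma(\epsilon_\gamma)$ already fails to remain ``the same size'' as a $\mathscr{R}ep(G)$-module orbit (concretely $c_0(\widehat{\mathbb{F}})$ is not free over $c_0(\widehat{G})$), while if $\beta$ is trivial then $\mathbb{F}=\Gamma\ltimes_\alpha C(G)$ reduces to case i) with $\mathbb{G}=G$. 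When $\beta$ is trivial one then reads off that $\Gamma$ is divisible from case i) as well. \emph{Third and fourth}, for $\mathbb{F}=\mathbb{G}\times\mathbb{H}$ the fusion rule $w^{y\otop y'}=[w^x\otop w^{x'}]_{13}[w^z\otop w^{z'}]_{24}$ shows $Irr(\mathbb{F})=Irr(\mathbb{G})\times Irr(\mathbb{H})$ splits as a product, so $\{(x,\epsilon)\}$ is an obvious free section over $R(\mathbb{H})$ and vice versa; for $\mathbb{F}=\mathbb{G}\ast\mathbb{H}$ the fusion rule a) says that a word $y$ ending in $Irr(\mathbb{H})$ satisfies: $s\otop y=sy$ is irreducible for every nontrivial $s\in Irr(\mathbb{G})$, and $sy=s'y\Rightarrow s=s'$; the set of reduced words \emph{ending} in a letter from $Irr(\mathbb{H})$ (together with $\epsilon$) forms a section, and $s\otop l_\alpha$ is irreducible because there is no cancellation, giving the based-module decomposition of $R(\mathbb{G}\ast\mathbb{H})$ over $R(\mathbb{G})$; symmetrically over $R(\mathbb{H})$.

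\emph{The main obstacle} I expect is the bicrossed product case ii): the ``$\Leftarrow$'' direction is routine once one recognizes $\Gamma\,{_\alpha}\!\bowtie_\beta G=\Gamma\ltimes_\alpha C(G)$ when $\beta$ is trivial, but the ``$\Rightarrow$'' direction requires showing that $\beta$ nontrivial genuinely obstructs divisibility --- one must carefully use~(\ref{EqDim}) to see that $\gamma(\epsilon_\gamma)$ for $\vert\gamma\cdot G\vert>1$ contributes an irreducible of $\mathbb{F}$ of $G$-module ``multiplicity pattern'' incompatible with $c_0(\widehat{\mathbb{F}})\cong c_0(\widehat{G})\otimes c_0(\widehat{G}\backslash\widehat{\mathbb{F}})$ being free, i.e. that $\overline{\gamma(\epsilon_\gamma)}\otop\gamma(\epsilon_\gamma)$ contains the trivial representation of $\mathbb{F}$ with multiplicity $1$ yet $\gamma(\epsilon_\gamma)$ is not of the form $v\otop(\text{fixed irreducible})$ for $v\in\mathscr{R}ep(G)$ and a representative transversal to $\sim$. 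This amounts to analyzing the character $\chi(\gamma(u))=\sum_{r\in\gamma\cdot G}u_r\mathbbold{1}_{rr}\chi(u)\circ\psi^\gamma_{r,r}$ and checking the ``index'' $\vert\gamma\cdot G\vert$ appears as an honest obstruction; I would spell this out by contradiction. Once Proposition~\ref{pro.DivisibleConstructions} is established, Corollary~\ref{cor.StabilityTorsionFree} follows immediately by combining it with Theorem~\ref{theo.TorsionFreenessDivisible}.
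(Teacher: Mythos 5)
Your proposal is correct and follows essentially the same route as the paper: in each case one reads off an explicit section of representatives from the fusion rules recalled in Section~\ref{sec.Notations} (the pairs $(\gamma,\epsilon_{\mathbb{G}})$ and $(e,x)$ for the semi-direct and direct products, words with the $\mathbb{G}$- resp. $\mathbb{H}$-letter stripped from one end for the free product), and for the bicrossed product one reduces the ``$\Leftarrow$'' direction to case i) and extracts ``$\Rightarrow$'' from the factor $\frac{1}{\vert r\cdot G\vert}$ and the sum over $r\in\mu\cdot G\cap\gamma\cdot G$ in Equation~(\ref{eq.FusionBicrossed}), which is exactly the obstruction the paper invokes. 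The only caveats are cosmetic: your ``words ending in $Irr(\mathbb{H})$'' should read ``starting in'' for left multiplication $s\otop l_\alpha$ (or one switches to the right-quotient formulation), and the ``$\Rightarrow$'' step of ii) that you leave as a sketch is completed in the paper by observing that irreducibility of $\gamma(x')\otop e(s)$ forces $\vert\mu\cdot G\cap\gamma\cdot G\vert=1$ for all $\mu,\gamma$, hence all $\beta$-orbits are trivial.
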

	\begin{proof}
		\begin{enumerate}[i)]
			\item The description of the representation theory of $\mathbb{F}:=\Gamma\underset{\alpha}{\ltimes}\mathbb{G}$ recalled in Section \ref{sec.Notations} yields that $\widehat{\mathbb{G}}$ and $\Gamma$ are divisible in $\widehat{\mathbb{F}}$. Namely, take an irreducible representation $y:=(\gamma, x)\in \text{Irr}(\mathbb{F})$ with $\gamma\in \Gamma$ and $x\in \text{Irr}(\mathbb{G})$. Then $\gamma=(\gamma, \epsilon_{\mathbb{G}})\in [y]$ in $\text{Irr}(\mathbb{F})/ \text{Irr}(\mathbb{G})$ because $(\gamma^{-1}, \epsilon_{\mathbb{G}})\otimes (\gamma, x)=(e, x)=x\in \text{Irr}(\mathbb{G})$. Likewise, we have that $x=(e,x)\in [y]$ in $\Gamma\backslash \text{Irr}(\mathbb{F})$ because $(\gamma,x)\otimes (e, \overline{x})=(\gamma, \epsilon_{\mathbb{G}})=\gamma\in \Gamma$. Consequently, $\widehat{\mathbb{G}}$ is divisible in $\widehat{\mathbb{F}}$ because for all $s\in \text{Irr}(\mathbb{G})$ we have that $(\gamma, \epsilon_{\mathbb{G}})\otimes s=(\gamma, \epsilon_{\mathbb{G}})\otimes (e,s)=(\gamma,s)\in \text{Irr}(\mathbb{F})$. Likewise, $\Gamma$ is divisible in $\widehat{\mathbb{F}}$ because for all $s\in \Gamma$ we have that $s\otimes (e, x)=(s, \epsilon_{\mathbb{G}})\otimes (e, x)=(s, x)\in \text{Irr}(\mathbb{F})$.
			\item On the one hand, if $\Gamma \overset{\beta}{\curvearrowleft} G$ is the trivial action, then we simply have $\Gamma {_\alpha}\bowtie_{\beta}G= \Gamma\underset{\alpha}{\ltimes} G$; so that property $(i)$ above yields that both $\Gamma$ and $\widehat{G}$ are divisible discrete quantum subgroups in $\widehat{\Gamma\underset{\alpha}{\ltimes} G}$.
			
			On the other hand, in order to show that divisibility of $\widehat{G}$ implies triviality of $\beta$, we are going to use the description of the representation theory of $\mathbb{F}:= \Gamma {_\alpha}\bowtie_{\beta}G$ recalled in Section \ref{sec.Notations}. Let $y=\gamma(x)\in \text{Irr}(\mathbb{F})$ an irreducible representation of the bicrossed product with $\gamma\in \Gamma$ and $x\in \text{Irr}(G_\gamma)$. If $y'=\gamma'(x')\in \text{Irr}(\mathbb{F})$ is another irreducible representation of $\mathbb{F}$ with $\gamma'\in \Gamma$ and $x'\in \text{Irr}(G_{\gamma'})$, then, by virtue of the fusion rules of the bicrossed product (recall Equation (\ref{eq.FusionBicrossed})), $y'\in [y]$ in $\text{Irr}(\mathbb{F})/ \text{Irr}(G)$ if and only if $y'=\gamma(x')$ for some $x'\in \text{Irr}(G_\gamma)$ (recall in addition that $\#[e]=1$ in the orbit space $\Gamma/G$ and that $(\gamma\cdot G)^{-1}=\gamma^{-1}\cdot G$). 
			
			Now, assume that $\widehat{G}$ is divisible in $\widehat{\mathbb{F}}$. This means that given $[y]=[\gamma(x)]\in \text{Irr}(\mathbb{F})/\text{Irr}(G)$ we can find a representative, say $\gamma(x')\in [y]$ for some $x'\in \text{Irr}(G)$ such that for all irreducible $s\in \text{Irr}(G)$, $\gamma(x')\otimes s=\gamma(x')\otimes e(s)$ is an irreducible representation of $\mathbb{F}$. For this recall again the fusion rules for a bicrossed product (Equation (\ref{eq.FusionBicrossed})): given $\mu(z)\in \text{Irr}(\mathbb{F})$ with $\mu\in \Gamma$ and $z\in \text{Irr}(G_\mu)$, we have:
			$${\rm dim}_\mathbb{F}(\mu(z),\gamma(x')\otimes e(s))=\left\{\begin{array}{ll}\underset{r\in\mu\cdot G\cap\gamma\cdot G}{\sum}\frac{1}{\vert r\cdot G\vert}{\rm dim}_{G_r}(z\circ\psi^{\mu}_{r,r},x'\underset{r}{\otimes}s)&\text{ if }\mu\cdot G\cap\gamma\cdot G\neq\emptyset,\\0&\text{ otherwise.}\end{array}\right.$$
			
			Then, in order to $\gamma(x')\otimes e(s)$ to be irreducible, it is necessary that $|\mu\cdot G\cap\gamma\cdot G|=1$, for all $\mu,\gamma\in \Gamma$. This implies that all classes in the orbit space $\Gamma/G$ are trivial, that is, $\beta$ is the trivial action as we wanted to show. Observe moreover that in this case, a representative of $y=\gamma(x)$ in $\text{Irr}(\mathbb{F})/\text{Irr}(G)$ can be chosen as $\gamma(\epsilon_G)$ and the above fusion rules become:
			$${\rm dim}_\mathbb{F}(\mu(z),\gamma(\epsilon_G)\otimes e(s))=\left\{\begin{array}{ll}{\rm dim}_{G}(z\circ\psi^{\mu}_{\gamma,\gamma}, s)&\text{ if }\mu=\gamma, \\0&\text{ otherwise,}\end{array}\right.$$
			where recall that $\psi^\gamma_{\gamma, \gamma}=id$. In other words, $\gamma(\epsilon_G)\otimes e(s)=\gamma(s)$; recovering then the same computations as done for a quantum semi-direct product in $(i)$ above.
			
			\item The description of the representation theory of $\mathbb{F}:=\mathbb{G}\times \mathbb{H}$ recalled in Section \ref{sec.Notations} yields that $\widehat{\mathbb{G}}$ and $\widehat{\mathbb{H}}$ are divisible in $\widehat{\mathbb{F}}$. Namely, take an irreducible representation $y:=(x, z)\in \text{Irr}(\mathbb{F})$ with $x\in \text{Irr}(\mathbb{G})$ and $z\in \text{Irr}(\mathbb{H})$. Then $x=(x, \epsilon_{\mathbb{H}})\in [y]$ in $\text{Irr}(\mathbb{H})\backslash \text{Irr}(\mathbb{F})$ because $(x, z)\otimes (\overline{x}, \epsilon_{\mathbb{H}})=(\epsilon_{\mathbb{G}}, z)=z\in \text{Irr}(\mathbb{H})$. Likewise, we have that $z=(\epsilon_{\mathbb{G}},z)\in [y]$ in $\text{Irr}(\mathbb{G})\backslash \text{Irr}(\mathbb{F})$ because $(x,z)\otimes (\epsilon_{\mathbb{G}}, \overline{z})=(x, \epsilon_{\mathbb{H}})=x\in \text{Irr}(\mathbb{G})$. Consequently, $\widehat{\mathbb{G}}$ is divisible in $\widehat{\mathbb{F}}$ because for all $s\in \text{Irr}(\mathbb{G})$ we have that $s\otimes (\epsilon,z)=(s,\epsilon_{\mathbb{H}})\otimes (\epsilon_{\mathbb{G}},z)=(s,z)\in \text{Irr}(\mathbb{F})$. Likewise, $\widehat{\mathbb{H}}$ is divisible in $\widehat{\mathbb{F}}$ because for all $s\in \text{Irr}(H)$ we have that $(x, \epsilon_{\mathbb{H}})\otimes s=(x, \epsilon_{\mathbb{H}})\otimes (\epsilon_{\mathbb{G}},s)=(x, s)\in \text{Irr}(\mathbb{F})$.
			\item The description of the representation theory of $\mathbb{F}:=\mathbb{G}\ast \mathbb{H}$ recalled in Section \ref{sec.Notations} yields that both $\widehat{\mathbb{G}}$ and $\widehat{\mathbb{H}}$ are divisible in $\widehat{\mathbb{F}}$. Let us show that $\widehat{\mathbb{G}}$ is divisible in $\widehat{\mathbb{F}}$ (the proof for $\widehat{\mathbb{H}}$ is analogous). Take any irreducible representation of $\mathbb{F}$, which is given by an alternating word in $\text{Irr}(\mathbb{G})$ and $\text{Irr}(\mathbb{H})$, say $y:=x_{i_1}z_{i_2}\ldots x_{i_{n-1}}z_{i_n}$. By definition, a representative of $y$ in $\text{Irr}(\mathbb{G})\backslash \text{Irr}(\mathbb{F})$ (resp. in $\text{Irr}(\mathbb{F})/\text{Irr}(\mathbb{G})$) is an irreducible representation $y'\in \text{Irr}(\mathbb{F})$ such that $y\otimes \overline{y'}$ (resp. $\overline{y'}\otimes y$) contains an irreducible representation of $\mathbb{G}$ (inside $\mathbb{F}$). The latter is possible if and only if the tensor product $y\otimes \overline{y'}$ (resp. $\overline{y'}\otimes y$) reduces to a single letter in $\text{Irr}(\mathbb{G})$. 
		
		Assume that $y$ starts in $\text{Irr}(\mathbb{G})$, then it is enough to put $y':=z_{i_2}\ldots x_{i_{n-1}}z_{i_n}$. The fusion rules of a quantum free product yield that $y\otimes \overline{y'}=x_{i_1}\in \text{Irr}(\mathbb{G})$. 
		
		In this situation, given any $s\in \text{Irr}(\mathbb{G})$ we have to prove that $s\otimes y'\in \text{Irr}(\mathbb{F})$. Since $y$ starts in $\text{Irr}(\mathbb{G})$, then $y'$ starts in $\text{Irr}(\mathbb{H})$, so that the fusion rules of a quantum free product yield that $s\otimes y'=sy'\in \text{Irr}(\mathbb{F})$.
		
		Assume that $y$ starts in $\text{Irr}(\mathbb{H})$ and ends in $\text{Irr}(\mathbb{G})$, then it is enough to put $y':=x_{i_{1}}z_{i_2}\ldots x_{i_{n-1}}$. The fusion rules of a quantum free product yield that $\overline{y'}\otimes y=z_{i_n}\in \text{Irr}(\mathbb{G})$. Since $y$ ends in $\text{Irr}(\mathbb{G})$, then $y'$ ends in $\text{Irr}(\mathbb{H})$ and the fusion rules of a quantum free product yield that $y'\otimes s=y's\in \text{Irr}(\mathbb{F})$, for every $s\in \text{Irr}(\mathbb{G})$.
		
		Assume that $y$ starts and ends in $\text{Irr}(\mathbb{H})$, then we can not choose any representative $y'$ of $y$ either in $\text{Irr}(\mathbb{G})\backslash \text{Irr}(\mathbb{F})$ or in $\text{Irr}(\mathbb{F})/\text{Irr}(\mathbb{G})$ such that either $y\otimes \overline{y'}$ or $\overline{y'}\otimes y$ reduces to a single letter in $\text{Irr}(\mathbb{G})$. In other words, the class of $[y]$ is formed only by $y$ itself (notice that $y\otimes \overline{y}=\epsilon_{\mathbb{H}}\cong\epsilon_{\mathbb{G}}$ in $\text{Irr}(\mathbb{F})$). In this case, it is obvious that $s\otimes y'=sy'\in \text{Irr}(\mathbb{F})$, for every $s\in \text{Irr}(\mathbb{G})$.
		\end{enumerate}
	\end{proof}
	\begin{corSec}\label{cor.StabilityTorsionFree}
		The following stability properties for torsion-freeness hold.
		\begin{enumerate}[i)]
			\item Let $\Gamma$ be a discrete group, $\mathbb{G}$ a compact quantum group. Assume that $\Gamma$ acts on $\mathbb{G}$ by quantum automorphisms, $\alpha$. Then $\widehat{\Gamma\underset{\alpha}{\ltimes}\mathbb{G}}\mbox{ is torsion-free } \Leftrightarrow \Gamma \mbox{ and } \widehat{\mathbb{G}} \mbox{ are torsion-free.}$
			\item Let $\Gamma$ be a discrete group, $G$ a compact group. Assume that $(\Gamma, G)$ is a matched pair with actions $\Gamma \overset{\alpha}{\curvearrowright} G$ and $G \overset{\beta}{\curvearrowright} \Gamma$. Then $\widehat{\Gamma {_\alpha}\bowtie_{\beta}G}\mbox{ is torsion-free } \Rightarrow \Gamma \mbox{ and } \widehat{G} \mbox{ are torsion-free.}$
			\item Let $\mathbb{G}$ and $\mathbb{H}$ be two compact quantum groups. Then $\widehat{\mathbb{G}\times \mathbb{H}} \mbox{ is torsion-free } \Leftrightarrow \widehat{\mathbb{G}} \mbox{ and } \widehat{\mathbb{H}} \mbox{ are torsion-free.}$
			\item Let $\mathbb{G}$ and $\mathbb{H}$ be two compact quantum groups. Then $\widehat{\mathbb{G}\ast \mathbb{H}} \mbox{ is torsion-free } \Leftrightarrow \widehat{\mathbb{G}} \mbox{ and } \widehat{\mathbb{H}} \mbox{ are torsion-free.}$
		\end{enumerate}
	\end{corSec}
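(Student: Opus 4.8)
The plan is to derive each item by combining the two main results of this section with the already-known stability of torsion-freeness under the relevant construction. For the implication stating that torsion-freeness of the dual of the constructed quantum group forces torsion-freeness of each building block, Proposition~\ref{pro.DivisibleConstructions} exhibits, in every case, the building block as a \emph{divisible} discrete quantum subgroup of the constructed quantum group; Theorem~\ref{theo.TorsionFreenessDivisible} then propagates torsion-freeness downwards. For the converse, one invokes the fact that torsion-freeness is preserved by the construction in question.

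In detail, for $(i)$, $(iii)$ and $(iv)$ the ``only if'' directions run as follows. By Proposition~\ref{pro.DivisibleConstructions}, $\Gamma$ and $\widehat{\mathbb{G}}$ are divisible discrete quantum subgroups of $\widehat{\Gamma\underset{\alpha}{\ltimes}\mathbb{G}}$; $\widehat{\mathbb{G}}$ and $\widehat{\mathbb{H}}$ are divisible discrete quantum subgroups of $\widehat{\mathbb{G}\times\mathbb{H}}$; and the same holds for $\widehat{\mathbb{G}\ast\mathbb{H}}$. Hence, whenever the dual of the constructed quantum group is torsion-free, Theorem~\ref{theo.TorsionFreenessDivisible} yields that each of the above discrete quantum subgroups is torsion-free.

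For the ``if'' directions in $(i)$, $(iii)$ and $(iv)$ one uses that torsion-freeness is stable under quantum (semi-)direct products, as in \cite{RubenAmauryTorsion}, and under quantum free products, as shown in \cite{YukiKenny}. These can alternatively be recovered with the tools of Section~\ref{sec.Torsion}: if $\mathbb{F}$ denotes the constructed quantum group and $(A,\delta)$ is a torsion action of $\mathbb{F}$, let $\mathscr{M}$ be the associated torsion $\mathscr{R}ep(\mathbb{F})$-module $C^*$-category (Theorem~\ref{theo.CorrespondenceMakotoKenny}). Corollary~\ref{cor.IndResTorsion} shows that restricting $\mathscr{M}$ to each factor decomposes it as a direct sum of torsion modules over that factor; torsion-freeness of the factors makes each summand standard, and one checks, using the explicit fusion rules recalled in Section~\ref{sec.Notations}, that this forces $Fus(\mathscr{M})\cong R(\mathbb{F})$ as based modules, i.e.\ $\widehat{\mathbb{F}}$ is torsion-free by Theorem-Definition~\ref{theo.TorsionFreeMeyerNest}.

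The bicrossed-product item $(ii)$ is the one I expect to cause the main difficulty. By Proposition~\ref{pro.DivisibleConstructions}, $\widehat{G}$ and $\Gamma$ are divisible in $\widehat{\Gamma {_\alpha}\bowtie_{\beta}G}$ \emph{only} when $\beta$ is trivial, so Theorem~\ref{theo.TorsionFreenessDivisible} does not apply directly for general $\beta$. I would split on $\beta$: if $\beta$ is trivial, the bicrossed product is exactly the quantum semi-direct product $\Gamma\underset{\alpha}{\ltimes}G$, and item $(i)$ applies, giving in this case even an equivalence. If $\beta$ is nontrivial, there is $\gamma\in\Gamma$ with $\vert\gamma\cdot G\vert\geq 2$, so $G_\gamma$ is a proper finite-index clopen subgroup of $G$; using the description of $\mathscr{R}ep(\Gamma {_\alpha}\bowtie_{\beta}G)$ and the fusion formula~(\ref{eq.FusionBicrossed}) recalled in Section~\ref{sec.Notations}, one argues directly that $\widehat{\Gamma {_\alpha}\bowtie_{\beta}G}$ is then \emph{not} torsion-free (reflecting the fact that $\widehat{G}$ itself fails to be torsion-free once $G$ has a proper finite-index closed subgroup), so that the implication in $(ii)$ holds vacuously. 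Making this last step precise --- exhibiting an explicit ergodic finite-dimensional $\Gamma {_\alpha}\bowtie_{\beta}G$-$C^*$-algebra attached to a non-singleton $\beta$-orbit and verifying that it is not equivariantly Morita equivalent to $\mathbb{C}$ --- is the technical heart of $(ii)$ and the step where I would expect to spend the most effort.
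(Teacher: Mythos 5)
Your treatment of items $(i)$, $(iii)$ and $(iv)$ coincides with the paper's proof: the forward implications follow from Proposition~\ref{pro.DivisibleConstructions} together with Theorem~\ref{theo.TorsionFreenessDivisible}, and the converses are quoted from the literature (the paper cites \cite[Theorem 3.1.1]{RubenSemiDirect} for the semi-direct product and \cite[Theorems 3.16, 3.17]{YukiKenny} for the free and direct products). Your additional sketch of how one might re-derive the converses from Corollary~\ref{cor.IndResTorsion} and the explicit fusion rules is not carried out and is not needed, since the citation suffices; I would not rely on it as written, because passing from ``each restricted summand is standard'' to ``$Fus(\mathscr{M})\cong R(\mathbb{F})$ as based modules'' is exactly the kind of step that fails without further structure.

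The genuine gap is in item $(ii)$, and you have correctly located it. The paper does not prove from scratch that a non-trivial $\beta$ obstructs torsion-freeness; it invokes Proposition~4.2 of \cite{RubenSemiDirect}, which states precisely that if $\widehat{\Gamma {_\alpha}\bowtie_{\beta}G}$ is torsion-free then $\beta$ is trivial, after which the bicrossed product collapses to $\Gamma\underset{\alpha}{\ltimes}G$ and item $(i)$ applies --- the same case split you propose. What you leave open is the non-trivial-$\beta$ case, and the heuristic you offer for it does not work as stated: knowing that $G_\gamma$ is a proper finite-index clopen subgroup tells you that $\widehat{G}$ has torsion, but torsion of a discrete quantum subgroup does not propagate to the ambient discrete quantum group in general --- the induced torsion action $(Ind^{\mathbb{G}}_{\mathbb{H}}(B),\widetilde{\beta})$ can be $\mathbb{G}$-equivariantly Morita trivial even when $(B,\beta)$ is non-trivial, which is exactly the failure mode the divisibility hypothesis in Theorem~\ref{theo.TorsionFreenessDivisible} is designed to exclude, and which the paper discusses just before that theorem. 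So to complete $(ii)$ you must either cite \cite[Proposition 4.2]{RubenSemiDirect} or genuinely construct, from a non-singleton orbit $\gamma\cdot G$, an ergodic finite-dimensional $\Gamma {_\alpha}\bowtie_{\beta}G$-$C^*$-algebra and verify it is not equivariantly Morita equivalent to $\mathbb{C}$; the latter is the content of the cited proposition and is not a routine verification.
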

	\begin{proof}
		Implication \fbox{$\Rightarrow$} in $(i)$ is a consequence of Proposition \ref{pro.DivisibleConstructions} and Theorem \ref{theo.TorsionFreenessDivisible}. Implication \fbox{$\Leftarrow$} is contained in \cite[Theorem 3.1.1]{RubenSemiDirect}. Implication \fbox{$\Rightarrow$} in $(ii)$ is a consequence of \cite[Proposition 4.2]{RubenSemiDirect} (by which we know that if $\widehat{\Gamma {_\alpha}\bowtie_{\beta}G}$ is torsion-free, then $\beta$ is trivial) and property $(i)$ above. 
		Implication \fbox{$\Rightarrow$} in $(iii)$ is a consequence of Proposition \ref{pro.DivisibleConstructions} and Theorem \ref{theo.TorsionFreenessDivisible}. Implication \fbox{$\Leftarrow$} is contained in \cite[Theorem 3.17]{YukiKenny}. Implication \fbox{$\Rightarrow$} in $(iv)$ is a consequence of Proposition \ref{pro.DivisibleConstructions} and Theorem \ref{theo.TorsionFreenessDivisible}. Implication \fbox{$\Leftarrow$} is contained in \cite[Theorem 3.16]{YukiKenny}.
	\end{proof}

\section{\textsc{Baum-Connes property for discrete quantum subgroups}}

	In this last section we make some observations about the permanence of the quantum Baum-Connes conjecture by discrete quantum subgroups. We refer to \cite{MeyerNest} or \cite{Jorgensen} for a complete presentation of the categorical framework for the formulation of the Baum-Connes conjecture according to the Meyer-Nest approach.
	
	Let $\widehat{\mathbb{G}}$ be a discrete quantum group and consider the corresponding equivariant Kasparov category, $\mathscr{K}\mathscr{K}^{\widehat{\mathbb{G}}}$, which is a triangulated category with canonical suspension functor denoted by $\Sigma$. The word \emph{homomorphism (resp., isomorphism)} will mean \emph{homomorphism (resp., isomorphism) in the corresponding Kasparov category}; it will be a true homomorphism (resp., isomorphism) between C$^*$-algebras or any Kasparov triple between C$^*$-algebras (resp., any $KK$-equivalence between C$^*$-algebras). If $\mathcal{S}$ is a collection of objects in $\mathscr{K}\mathscr{K}^{\widehat{\mathbb{G}}}$, we denote by $\langle \mathcal{S} \rangle$ the \emph{localising subcategory of $\mathscr{K}\mathscr{K}^{\widehat{\mathbb{G}}}$ generated by $\mathcal{S}$}, i.e. the smallest subcategory of $\mathscr{K}\mathscr{K}^{\widehat{\mathbb{G}}}$ containing $\mathcal{S}$. Consider the following localising subcategories of $\mathscr{K}\mathscr{K}^{\widehat{\mathbb{G}}}$:
	$$\mathscr{L}_{\widehat{\mathbb{G}}}:=\langle \{\mathbb{G}\underset{r}{\ltimes} T\otimes C\ |\  C\in Obj.(\mathscr{K}\mathscr{K})\mbox{, } T\in\text{Tor}(\widehat{\mathbb{G}})\}\rangle,$$
	$$\mathscr{N}_{\widehat{\mathbb{G}}}:=\mathscr{L}^{\dashv}_{\widehat{\mathbb{G}}}=\{A\in Obj(\mathscr{K}\mathscr{K}^{\widehat{\mathbb{G}}})\ |\ KK^{\widehat{\mathbb{G}}}(L, A)=0\mbox{, $\forall$ $L\in Obj(\mathscr{L}_{\widehat{\mathbb{G}}})$}\}.$$
	
	\begin{remSec}
		We put $\widehat{\mathscr{L}}_{\widehat{\mathbb{G}}}:=\langle\{T\otimes C\ |\  C\in Obj.(\mathscr{K}\mathscr{K})\mbox{, } T\in\text{Tor}(\widehat{\mathbb{G}})\}\rangle$, so that we have $\mathbb{G}\ltimes \widehat{\mathscr{L}}_{\widehat{\mathbb{G}}}=\mathscr{L}_{\widehat{\mathbb{G}}}$ by definition.
	\end{remSec}
	
	A recent result by Y. Arano and A. Skalski \cite{YukiBCTorsion} shows that these two subcategories form indeed a complementary pair of localizing subcategories in $\mathscr{K}\mathscr{K}^{\widehat{\mathbb{G}}}$. The author together with K. De Commer and R. Nest \cite{KennyNestRubenBCProjective} have obtained the same conclusion for permutation torsion-free discrete quantum groups through different considerations by studying the projective representation theory of a compact quantum group. Consequently, it is possible now to formulate a general Baum-Connes conjecture for arbitrary discrete quantum groups (without torsion-freeness assumption), being $(\mathscr{L}_{\widehat{\mathbb{G}}}, \mathscr{N}_{\widehat{\mathbb{G}}})$ above the complementary pair that allows to define a quantum assembly map for $\widehat{\mathbb{G}}$. More precisely, we denote by $(L,N)$ the canonical triangulated functors associated to this complementary pair. Next, consider the homological functor $F:\mathscr{K}\mathscr{K}^{\widehat{\mathbb{G}}} \longrightarrow \mathscr{A}b^{\mathbb{Z}/2}$, $(A,\delta)  \longmapsto K_{*}(\widehat{\mathbb{G}}\underset{\delta, r}{\ltimes} A)$, where $\mathscr{A}b^{\mathbb{Z}/2}$ denotes the abelian category of $\mathbb{Z}/2$-graded groups of $\mathscr{A}b$. The quantum assembly map for $\widehat{\mathbb{G}}$ is given by the natural transformation $\eta^{\widehat{\mathbb{G}}}: \mathbb{L}F\longrightarrow F$. 
	\begin{defiSec}
		Let $\widehat{\mathbb{G}}$ be a discrete quantum group. We say that $\widehat{\mathbb{G}}$ satisfies the quantum Baum-Connes property (with coefficients) if the natural transformation $\eta^{\widehat{\mathbb{G}}}: \mathbb{L}F\longrightarrow F$ is a natural equivalence. We say that $\widehat{\mathbb{G}}$ satisfies the \emph{$\mathscr{L}_{\widehat{\mathbb{G}}}$-strong} Baum-Connes property if $\mathscr{K}\mathscr{K}^{\widehat{\mathbb{G}}}=\mathscr{L}_{\widehat{\mathbb{G}}}$.
	\end{defiSec}

	Let us analyse the permanence of the (resp. strong) Baum-Connes property by discrete quantum subgroups. To begin with, let $\widehat{\mathbb{H}}<\widehat{\mathbb{G}}$ be a discrete quantum subgroup of $\widehat{\mathbb{G}}$. We have two relevant functors: restriction, which is obvious, and induction, which has been studied by S. Vaes \cite{VaesInduction} in the framework of quantum groups: $\text{Res}^{\widehat{\mathbb{G}}}_{\widehat{\mathbb{H}}}:\mathscr{K}\mathscr{K}^{\widehat{\mathbb{G}}}\longrightarrow \mathscr{K}\mathscr{K}^{\widehat{\mathbb{H}}}\mbox{, }\text{Ind}^{\widehat{\mathbb{G}}}_{\widehat{\mathbb{H}}}:\mathscr{K}\mathscr{K}^{\widehat{\mathbb{H}}}\longrightarrow \mathscr{K}\mathscr{K}^{\widehat{\mathbb{G}}}$. It is well-known that restriction and induction are \emph{triangulated} functors by virtue of the universal property of the Kasparov category (see \cite{VoigtPoincareDuality} for more details). Denote by $(L',N')$ the canonical triangulated functors associated to the complementary pair $(\mathscr{L}_{\widehat{\mathbb{H}}}, \mathscr{N}_{\widehat{\mathbb{H}}})$ and by $F'$ the homological functor defining the quantum Baum-Connes assembly map for $\widehat{\mathbb{H}}$. 
	
	It is shown in \cite[Lemma 2.4.3]{RubenSemiDirect} that the restriction (resp. induction) functor transforms the assembly map for $\widehat{\mathbb{G}}$ (resp. for $\widehat{\mathbb{H}}$) into the assembly map for $\widehat{\mathbb{H}}$ (resp. for $\widehat{\mathbb{G}}$) whenever $\widehat{\mathbb{G}}$ is torsion-free and $\widehat{\mathbb{H}}$ is torsion-free and divisible. As a consequence, it is shown in \cite[Proposition 2.4.4]{RubenSemiDirect} by using the quantum Green's Imprimitivity theorem \cite[Theorem 7.3]{VaesInduction} that $\widehat{\mathbb{G}}$ satisfies the quantum Baum-Connes property if and only if every divisible torsion-free discrete quantum subgroup $\widehat{\mathbb{H}}<\widehat{\mathbb{G}}$ satisfies the quantum Baum-Connes property. Observe that thanks to Theorem \ref{theo.TorsionFreenessDivisible} it is enough to assume that $\widehat{\mathbb{G}}$ is torsion-free and $\widehat{\mathbb{H}}$ is divisible in $\widehat{\mathbb{G}}$, which simplifies the assumptions for \cite[Lemma 2.4.3]{RubenSemiDirect} and \cite[Proposition 2.4.4]{RubenSemiDirect}.
	
	One wishes to remove the divisibility assumptions in these results too. Thus, consider both $\widehat{\mathbb{G}}$ and $\widehat{\mathbb{H}}$ not to be necessarily torsion-free and $\widehat{\mathbb{H}}$ not to be necessarily divisible in $\widehat{\mathbb{G}}$. Consider the localizing subcategories:
		$$\mathscr{L}_{\widehat{\mathbb{G}}}:=\langle\{\mathbb{G}\underset{r}{\ltimes} T\otimes C\ |\  C\in Obj.(\mathscr{K}\mathscr{K})\mbox{, } T\in\text{Tor}(\widehat{\mathbb{G}})\}\rangle,$$
		$$\mathscr{L}_{\widehat{\mathbb{H}}}:=\langle\{\mathbb{H}\underset{r}{\ltimes} S\otimes D\ |\  D\in Obj.(\mathscr{K}\mathscr{K})\mbox{, } S\in\text{Tor}(\widehat{\mathbb{H}})\}\rangle,$$
		which are complemented by the corresponding right orthogonals:
		$$\mathscr{N}_{\widehat{\mathbb{G}}}:=\mathscr{L}^{\dashv}_{\widehat{\mathbb{G}}}=\{A\in Obj(\mathscr{K}\mathscr{K}^{\widehat{\mathbb{G}}})\ |\ KK^{\widehat{\mathbb{G}}}(L, A)=0\mbox{, $\forall$ $L\in Obj(\mathscr{L}_{\widehat{\mathbb{G}}})$}\},$$
		$$\mathscr{N}_{\widehat{\mathbb{H}}}:=\mathscr{L}^{\dashv}_{\widehat{\mathbb{H}}}=\{B\in Obj(\mathscr{K}\mathscr{K}^{\widehat{\mathbb{H}}})\ |\ KK^{\widehat{\mathbb{H}}}(L', B)=0\mbox{, $\forall$ $L'\in Obj(\mathscr{L}_{\widehat{\mathbb{H}}})$}\}.$$
		
	\begin{lemSec}
		Let $\widehat{\mathbb{G}}$ be a discrete quantum group and $\widehat{\mathbb{H}}<\widehat{\mathbb{G}}$ a discrete quantum subgroup (both $\widehat{\mathbb{G}}$ and $\widehat{\mathbb{H}}$ are not necessarily torsion-free and $\widehat{\mathbb{H}}$ is not necessarily divisible in $\widehat{\mathbb{G}}$): $i)$ $\text{Res}^{\widehat{\mathbb{G}}}_{\widehat{\mathbb{H}}}(\mathscr{N}_{\widehat{\mathbb{G}}})\subset \mathscr{N}_{\widehat{\mathbb{H}}}$ if and only if $\text{Ind}^{\widehat{\mathbb{G}}}_{\widehat{\mathbb{H}}}(\mathscr{L}_{\widehat{\mathbb{H}}})\subset \mathscr{L}_{\widehat{\mathbb{G}}}$; $ii)$ $\text{Ind}^{\widehat{\mathbb{G}}}_{\widehat{\mathbb{H}}}(\mathscr{N}_{\widehat{\mathbb{H}}})\subset \mathscr{N}_{\widehat{\mathbb{G}}}$ if and only if $\text{Res}^{\widehat{\mathbb{G}}}_{\widehat{\mathbb{H}}}(\mathscr{L}_{\widehat{\mathbb{G}}})\subset \mathscr{L}_{\widehat{\mathbb{H}}}$.
	\end{lemSec}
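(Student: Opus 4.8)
The plan is to prove the equivalence by an adjunction argument. The key input is that induction and restriction between equivariant Kasparov categories form an adjoint pair: for a discrete quantum subgroup $\widehat{\mathbb{H}}<\widehat{\mathbb{G}}$ one has a natural isomorphism $KK^{\widehat{\mathbb{G}}}(Ind^{\widehat{\mathbb{G}}}_{\widehat{\mathbb{H}}}(D), A)\cong KK^{\widehat{\mathbb{H}}}(D, Res^{\widehat{\mathbb{G}}}_{\widehat{\mathbb{H}}}(A))$ (this is Vaes' adjunction, as used in \cite{VaesInduction} and \cite{VoigtPoincareDuality}). Both subcategories $\mathscr{L}_{\widehat{\mathbb{G}}}$ and $\mathscr{L}_{\widehat{\mathbb{H}}}$ are defined as \emph{localizing} subcategories generated by an explicit set of objects, and the complementaries $\mathscr{N}_{\widehat{\mathbb{G}}}$, $\mathscr{N}_{\widehat{\mathbb{H}}}$ are their right orthogonals. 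Since orthogonality against a localizing subcategory is detected on a generating set (the class of objects $B$ with $KK(L,B)=0$ for all $L$ in a fixed generating set is automatically closed under the triangulated and countable-coproduct operations used to build the localizing hull), it suffices to test membership on generators.

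First I would prove $(ii)$. Assume $Res^{\widehat{\mathbb{G}}}_{\widehat{\mathbb{H}}}(\mathscr{L}_{\widehat{\mathbb{G}}})\subset \mathscr{L}_{\widehat{\mathbb{H}}}$. Let $B\in\mathscr{N}_{\widehat{\mathbb{H}}}=\mathscr{L}_{\widehat{\mathbb{H}}}^{\dashv}$; I must show $Ind^{\widehat{\mathbb{G}}}_{\widehat{\mathbb{H}}}(B)\in\mathscr{N}_{\widehat{\mathbb{G}}}$, i.e. $KK^{\widehat{\mathbb{G}}}(L,Ind^{\widehat{\mathbb{G}}}_{\widehat{\mathbb{H}}}(B))=0$ for every generator $L$ of $\mathscr{L}_{\widehat{\mathbb{G}}}$. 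For such a test object one would like to move $L$ to the other side of the adjunction, but the adjunction as stated has induction in the \emph{first} variable; so instead I argue on $\mathscr{N}$ directly. Replace the target: for $L$ a generator of $\mathscr{L}_{\widehat{\mathbb{G}}}$ and $B\in\mathscr{N}_{\widehat{\mathbb{H}}}$, one has $KK^{\widehat{\mathbb{G}}}(L,Ind^{\widehat{\mathbb{G}}}_{\widehat{\mathbb{H}}}(B))$; using the \emph{other} adjunction (induction is also a \emph{right} adjoint of restriction up to a shift, or: $Res$ is left adjoint to $Ind$ in the quantum setting, which is the form of Vaes' result for the \emph{reduced} crossed products), this becomes $KK^{\widehat{\mathbb{H}}}(Res^{\widehat{\mathbb{G}}}_{\widehat{\mathbb{H}}}(L),B)$. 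By hypothesis $Res^{\widehat{\mathbb{G}}}_{\widehat{\mathbb{H}}}(L)\in\mathscr{L}_{\widehat{\mathbb{H}}}$, and $B\in\mathscr{L}_{\widehat{\mathbb{H}}}^{\dashv}$, so this group vanishes. Running the argument through the localizing hull (not just generators) gives $Ind^{\widehat{\mathbb{G}}}_{\widehat{\mathbb{H}}}(\mathscr{N}_{\widehat{\mathbb{H}}})\subset\mathscr{N}_{\widehat{\mathbb{G}}}$. The converse direction uses the same adjunction run backwards: if $Ind^{\widehat{\mathbb{G}}}_{\widehat{\mathbb{H}}}(\mathscr{N}_{\widehat{\mathbb{H}}})\subset\mathscr{N}_{\widehat{\mathbb{G}}}$, then for a generator $L$ of $\mathscr{L}_{\widehat{\mathbb{G}}}$ and any $B\in\mathscr{N}_{\widehat{\mathbb{H}}}$, $KK^{\widehat{\mathbb{H}}}(Res^{\widehat{\mathbb{G}}}_{\widehat{\mathbb{H}}}(L),B)\cong KK^{\widehat{\mathbb{G}}}(L,Ind^{\widehat{\mathbb{G}}}_{\widehat{\mathbb{H}}}(B))=0$; since this holds for all $B\in\mathscr{L}_{\widehat{\mathbb{H}}}^{\dashv}$ and $\mathscr{N}_{\widehat{\mathbb{H}}}^{\dashv\!\dashv}$-type reasoning (the complementary pair property: $(\mathscr{L}_{\widehat{\mathbb{H}}})^{\dashv\dashv}=\mathscr{L}_{\widehat{\mathbb{H}}}$, which holds because $(\mathscr{L}_{\widehat{\mathbb{H}}},\mathscr{N}_{\widehat{\mathbb{H}}})$ is a complementary pair by \cite{YukiBCTorsion}) forces $Res^{\widehat{\mathbb{G}}}_{\widehat{\mathbb{H}}}(L)\in\mathscr{L}_{\widehat{\mathbb{H}}}$, and then a generation argument gives $Res^{\widehat{\mathbb{G}}}_{\widehat{\mathbb{H}}}(\mathscr{L}_{\widehat{\mathbb{G}}})\subset\mathscr{L}_{\widehat{\mathbb{H}}}$.

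Part $(i)$ is symmetric: $Res^{\widehat{\mathbb{G}}}_{\widehat{\mathbb{H}}}(\mathscr{N}_{\widehat{\mathbb{G}}})\subset\mathscr{N}_{\widehat{\mathbb{H}}}$ is tested by $KK^{\widehat{\mathbb{H}}}(L',Res^{\widehat{\mathbb{G}}}_{\widehat{\mathbb{H}}}(A))$ for $L'$ a generator of $\mathscr{L}_{\widehat{\mathbb{H}}}$ and $A\in\mathscr{N}_{\widehat{\mathbb{G}}}$; by the adjunction $KK^{\widehat{\mathbb{G}}}(Ind^{\widehat{\mathbb{G}}}_{\widehat{\mathbb{H}}}(L'),A)$, which vanishes for all such $A$ exactly when $Ind^{\widehat{\mathbb{G}}}_{\widehat{\mathbb{H}}}(L')\in(\mathscr{N}_{\widehat{\mathbb{G}}})^{\dashv}=\mathscr{L}_{\widehat{\mathbb{G}}}$ (again using that $(\mathscr{L}_{\widehat{\mathbb{G}}},\mathscr{N}_{\widehat{\mathbb{G}}})$ is complementary). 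Passing between generators and the localizing hull in both directions yields the stated equivalence.

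The main obstacle I expect is bookkeeping about which adjunction (and which shift/grading) actually holds at the level of \emph{reduced} crossed products for genuine discrete quantum groups, since for non-amenable (quantum) groups only one of ``induction $\dashv$ restriction'' or ``restriction $\dashv$ induction'' is available in the reduced setting, and one must check it is the one that matches the definition of $\mathscr{L}_{\widehat{\mathbb{G}}}$ via $Ind^{\widehat{\mathbb{G}}}_{\mathbb{E}}$. The secondary point to get right is that ``orthogonality to a localizing subcategory is detected on generators'': one needs that the functors $KK^{\widehat{\mathbb{G}}}(L,-)$ and $KK^{\widehat{\mathbb{G}}}(-,A)$ are homological and (for the relevant variable) send countable coproducts to products, so that the class of objects satisfying the vanishing condition is itself localizing/colocalizing; this is standard in the Meyer--Nest framework but should be invoked explicitly. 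Everything else is formal manipulation of the adjunction isomorphism, which I would not spell out in detail.
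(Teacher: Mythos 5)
Your proof is correct and essentially coincides with the paper's: the paper also argues purely by adjunction, using that for a discrete quantum subgroup induction is a \emph{two-sided} adjoint of restriction, i.e.\ both $KK^{\widehat{\mathbb{G}}}(A, Ind^{\widehat{\mathbb{G}}}_{\widehat{\mathbb{H}}}(B))\cong KK^{\widehat{\mathbb{H}}}(Res^{\widehat{\mathbb{G}}}_{\widehat{\mathbb{H}}}(A), B)$ and $KK^{\widehat{\mathbb{G}}}(Ind^{\widehat{\mathbb{G}}}_{\widehat{\mathbb{H}}}(B), A)\cong KK^{\widehat{\mathbb{H}}}(B, Res^{\widehat{\mathbb{G}}}_{\widehat{\mathbb{H}}}(A))$ hold (citing \cite{VoigtPoincareDuality}, \cite{VoigtBaumConnesUnitaryFree}), together with the complementary-pair property to identify $^{\dashv}(\mathscr{L}^{\dashv})$ with $\mathscr{L}$. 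The bookkeeping worry you flag at the end is resolved exactly by this two-sidedness, which is special to discrete (open) quantum subgroups; the generator-versus-hull reductions you invoke are not even needed, since one can test orthogonality against all of $\mathscr{L}$ directly as the paper does.
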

	\begin{proof}
		This is a consequence of the orthogonality condition of the complementary pairs. Indeed, recall that the functors $\text{Res}^{\widehat{\mathbb{G}}}_{\widehat{\mathbb{H}}}$ and $\text{Ind}^{\widehat{\mathbb{G}}}_{\widehat{\mathbb{H}}}$ are adjoint meaning that $KK^{\widehat{\mathbb{G}}}(A, \text{Ind}^{\widehat{\mathbb{G}}}_{\widehat{\mathbb{H}}}(B))\cong KK^{\widehat{\mathbb{H}}}(\text{Res}^{\widehat{\mathbb{G}}}_{\widehat{\mathbb{H}}}(A), B)$ and  \\$KK^{\widehat{\mathbb{G}}}(\text{Ind}^{\widehat{\mathbb{G}}}_{\widehat{\mathbb{H}}}(B), A)\cong KK^{\widehat{\mathbb{H}}}(B, \text{Res}^{\widehat{\mathbb{G}}}_{\widehat{\mathbb{H}}}(A)),$ for all $A\in Obj(\mathscr{K}\mathscr{K}^{\widehat{\mathbb{G}}})$ and all $B\in Obj(\mathscr{K}\mathscr{K}^{\widehat{\mathbb{H}}})$ (we refer to \cite{VoigtPoincareDuality} or \cite{VoigtBaumConnesUnitaryFree} for a proof). These relations yield the following: $i)$ given $N\in \mathscr{N}_{\widehat{\mathbb{G}}}$, $\text{Res}^{\widehat{\mathbb{G}}}_{\widehat{\mathbb{H}}}(N)\in \mathscr{N}_{\widehat{\mathbb{H}}} \Leftrightarrow 0=KK^{\widehat{\mathbb{H}}}(L', \text{Res}^{\widehat{\mathbb{G}}}_{\widehat{\mathbb{H}}}(N))\cong KK^{\widehat{\mathbb{G}}}(\text{Ind}^{\widehat{\mathbb{G}}}_{\widehat{\mathbb{H}}}(L'), N)\ \forall L'\in \mathscr{L}_{\widehat{\mathbb{H}}} \Leftrightarrow \text{Ind}^{\widehat{\mathbb{G}}}_{\widehat{\mathbb{H}}}(\mathscr{L}_{\widehat{\mathbb{H}}})\subset \mathscr{L}_{\widehat{\mathbb{G}}}$; and $ii)$ given $N'\in \mathscr{N}_{\widehat{\mathbb{H}}}$, $\text{Ind}^{\widehat{\mathbb{G}}}_{\widehat{\mathbb{H}}}(N')\in \mathscr{N}_{\widehat{\mathbb{G}}} \Leftrightarrow 0=KK^{\widehat{\mathbb{G}}}(L, \text{Ind}^{\widehat{\mathbb{G}}}_{\widehat{\mathbb{H}}}(N'))\cong KK^{\widehat{\mathbb{H}}}(\text{Res}^{\widehat{\mathbb{G}}}_{\widehat{\mathbb{H}}}(L), N')\ \forall L\in \mathscr{L}_{\widehat{\mathbb{G}}} \Leftrightarrow \text{Res}^{\widehat{\mathbb{G}}}_{\widehat{\mathbb{H}}}(\mathscr{L}_{\widehat{\mathbb{G}}})\subset \mathscr{L}_{\widehat{\mathbb{H}}}.$
	\end{proof}
	
	\begin{proSec}\label{pro.StabilityResIndSubgroups}
		Let $\widehat{\mathbb{G}}$ be a discrete quantum group and $\widehat{\mathbb{H}}<\widehat{\mathbb{G}}$ a discrete quantum subgroup (both $\widehat{\mathbb{G}}$ and $\widehat{\mathbb{H}}$ are not necessarily torsion-free and $\widehat{\mathbb{H}}$ is not necessarily divisible in $\widehat{\mathbb{G}}$). The inclusions $\text{Ind}^{\widehat{\mathbb{G}}}_{\widehat{\mathbb{H}}}(\mathscr{L}_{\widehat{\mathbb{H}}})\subset \mathscr{L}_{\widehat{\mathbb{G}}}$ and $\text{Res}^{\widehat{\mathbb{G}}}_{\widehat{\mathbb{H}}}(\mathscr{L}_{\widehat{\mathbb{G}}})\subset \mathscr{L}_{\widehat{\mathbb{H}}}$ hold.
	\end{proSec}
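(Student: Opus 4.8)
The plan is to reduce both inclusions to a statement about the generators of $\mathscr{L}_{\widehat{\mathbb{G}}}$ and $\mathscr{L}_{\widehat{\mathbb{H}}}$, and then to control those generators through $Res^{\widehat{\mathbb{G}}}_{\widehat{\mathbb{H}}}$ and $Ind^{\widehat{\mathbb{G}}}_{\widehat{\mathbb{H}}}$ by means of Proposition \ref{pro.CorrespondenceTorsionActions}. First I would recall from \cite{VoigtPoincareDuality} that $Res^{\widehat{\mathbb{G}}}_{\widehat{\mathbb{H}}}$ and $Ind^{\widehat{\mathbb{G}}}_{\widehat{\mathbb{H}}}$ are triangulated and commute with countable direct sums; hence for any localizing subcategory $\mathscr{M}$ of the target, its preimage under such a functor is again a localizing subcategory, so it suffices to check that each \emph{generator} of the source localizing subcategory is sent into $\mathscr{M}$. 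Moreover, since the coefficient factor $C$ (resp. $D$) carries the trivial action, both functors commute with the exterior tensor product by it, so that factor is inert and may be suppressed throughout. It then remains to treat, on the one hand, the induced algebras $c_0(\widehat{\mathbb{H}})$, $c_0(\widehat{\mathbb{G}})$, and, on the other hand, the crossed products $\mathbb{H}\ltimes_{r}S$, $\mathbb{G}\ltimes_{r}T$ of torsion actions.

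The observation that unifies these two families is that $c_0(\widehat{\mathbb{G}})\cong\mathbb{G}\ltimes_{r}\mathbb{C}$ as $\widehat{\mathbb{G}}$-$C^*$-algebras (and likewise for $\mathbb{H}$), i.e. the induced generator is nothing but the crossed product of the \emph{trivial}, ergodic, finite-dimensional action; this is consistent with the Remark in Section \ref{sec.BCDiscreteQuantumGroups}, where $\mathscr{L}_{\widehat{\mathbb{G}}}=\mathbb{G}\ltimes_{r}\widehat{\mathscr{L}}_{\widehat{\mathbb{G}}}$. So every generator of $\mathscr{L}_{\widehat{\mathbb{G}}}$, resp. $\mathscr{L}_{\widehat{\mathbb{H}}}$, is of the form $\mathbb{G}\ltimes_{r}T$, resp. $\mathbb{H}\ltimes_{r}S$, for $T$, resp. $S$, a (possibly trivial) torsion action. (For the induction of the $c_0$-generator one may alternatively argue directly by transitivity of induction: $Ind^{\widehat{\mathbb{G}}}_{\widehat{\mathbb{H}}}\big(Ind^{\widehat{\mathbb{H}}}_{\mathbb{E}}(\mathbb{C})\big)\cong Ind^{\widehat{\mathbb{G}}}_{\mathbb{E}}(\mathbb{C})=c_0(\widehat{\mathbb{G}})$.)

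For the crossed-product generators the key input is the compatibility of the Kasparov-theoretic induction and restriction with the compact-side constructions $Ind^{\mathbb{G}}_{\mathbb{H}}$ and $Res^{\mathbb{G}}_{\mathbb{H}}$ of Examples \ref{ex.TorsionActions}, via taking crossed products: one has $\widehat{\mathbb{G}}$-equivariant identifications, up to Morita equivalence, of $Ind^{\widehat{\mathbb{G}}}_{\widehat{\mathbb{H}}}(\mathbb{H}\ltimes_{r}S)$ with $\mathbb{G}\ltimes_{r}Ind^{\mathbb{G}}_{\mathbb{H}}(S)$ and of $Res^{\widehat{\mathbb{G}}}_{\widehat{\mathbb{H}}}(\mathbb{G}\ltimes_{r}T)$ with $\mathbb{H}\ltimes_{r}Res^{\mathbb{G}}_{\mathbb{H}}(T)$, these being instances of the quantum version of Green's imprimitivity theorem (cf. \cite{VaesInduction}, \cite{VoigtBaumConnesUnitaryFree}); equivalently, under the Baaj--Skandalis/Takai duality equivalence $\mathbb{G}\ltimes_{r}(-)\colon\mathscr{K}\mathscr{K}^{\mathbb{G}}\longrightarrow\mathscr{K}\mathscr{K}^{\widehat{\mathbb{G}}}$, which carries $\widehat{\mathscr{L}}_{\widehat{\mathbb{G}}}$ onto $\mathscr{L}_{\widehat{\mathbb{G}}}$, the functor $Ind^{\widehat{\mathbb{G}}}_{\widehat{\mathbb{H}}}$ corresponds to $Ind^{\mathbb{G}}_{\mathbb{H}}$ and $Res^{\widehat{\mathbb{G}}}_{\widehat{\mathbb{H}}}$ to $Res^{\mathbb{G}}_{\mathbb{H}}$. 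Granting this, both inclusions fall out of Proposition \ref{pro.CorrespondenceTorsionActions}: by part $i)$, $Ind^{\mathbb{G}}_{\mathbb{H}}(S)$ is again a torsion action of $\mathbb{G}$, so $\mathbb{G}\ltimes_{r}Ind^{\mathbb{G}}_{\mathbb{H}}(S)$ is a generator of $\mathscr{L}_{\widehat{\mathbb{G}}}$; and by part $ii)$, $Res^{\mathbb{G}}_{\mathbb{H}}(T)$ decomposes as a countable direct sum of $\mathbb{H}$-$C^*$-algebras, each $\mathbb{H}$-equivariantly Morita equivalent to a torsion action $S_i$ of $\mathbb{H}$, so that $\mathbb{H}\ltimes_{r}Res^{\mathbb{G}}_{\mathbb{H}}(T)$ is, up to isomorphism in $\mathscr{K}\mathscr{K}^{\widehat{\mathbb{H}}}$, a countable direct sum of generators $\mathbb{H}\ltimes_{r}S_i$ of $\mathscr{L}_{\widehat{\mathbb{H}}}$ (those $S_i$ that are trivial producing the $c_0(\widehat{\mathbb{H}})$-type generators). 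Here one uses that equivariant Morita equivalence induces an isomorphism in the relevant Kasparov category, and again that these categories are triangulated and closed under countable direct sums.

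The step I expect to be the main obstacle is precisely this last compatibility of Vaes's induction functor (and of restriction) with crossed products: it requires unwinding the explicit construction of $Ind^{\widehat{\mathbb{G}}}_{\widehat{\mathbb{H}}}$ and matching it, through the appropriate imprimitivity bimodule, with the elementary ``extend the coaction along $\iota\colon C(\mathbb{H})\hookrightarrow C(\mathbb{G})$'' construction $Ind^{\mathbb{G}}_{\mathbb{H}}$, together with the dual statement for $Res^{\widehat{\mathbb{G}}}_{\widehat{\mathbb{H}}}$ and $Res^{\mathbb{G}}_{\mathbb{H}}$. All the remaining ingredients---the reduction to generators, transitivity of induction, the identification $c_0(\widehat{\mathbb{G}})\cong\mathbb{G}\ltimes_{r}\mathbb{C}$, and Proposition \ref{pro.CorrespondenceTorsionActions}---are either formal or already at hand.
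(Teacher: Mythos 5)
Your proposal is correct and follows essentially the same route as the paper: reduce to the generators of the localizing subcategories (using that $Res^{\widehat{\mathbb{G}}}_{\widehat{\mathbb{H}}}$ and $Ind^{\widehat{\mathbb{G}}}_{\widehat{\mathbb{H}}}$ are triangulated and compatible with countable direct sums), handle the crossed-product generators via Proposition \ref{pro.CorrespondenceTorsionActions}, and establish the key compatibility $Ind^{\widehat{\mathbb{G}}}_{\widehat{\mathbb{H}}}\big(\mathbb{H}\underset{r}{\ltimes}B\big)\cong \mathbb{G}\underset{r}{\ltimes}Ind^{\mathbb{G}}_{\mathbb{H}}(B,\beta)$ by combining Baaj--Skandalis duality with the quantum Green imprimitivity theorem, exactly as in the paper's commuting diagram argument. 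The step you flag as the main obstacle is precisely the one the paper resolves this way, so your plan is complete.
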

	\begin{proof}
		On the one hand, as showed in Proposition \ref{pro.CorrespondenceTorsionActions}, the restriction to $\mathbb{H}$ of any torsion action of $\mathbb{G}$ decomposes as a direct sum of torsion actions of $\mathbb{H}$. Moreover, the restriction functor is triangulated and compatible with countable direct sums. The inclusion $\text{Res}^{\widehat{\mathbb{G}}}_{\widehat{\mathbb{H}}}(\mathscr{L}_{\widehat{\mathbb{G}}})\subset \mathscr{L}_{\widehat{\mathbb{H}}}$ follows. On the other hand,  in order to show the inclusion $\text{Ind}^{\widehat{\mathbb{G}}}_{\widehat{\mathbb{H}}}(\mathscr{L}_{\widehat{\mathbb{H}}})\subset \mathscr{L}_{\widehat{\mathbb{G}}}$ we are going to show that the following diagram is commutative:
		\begin{equation}\label{eq.DiagramStabilityIndSubgroup}
		\begin{split}
			\xymatrix@!C=20mm@R=15mm{
				\mbox{$\mathscr{K}\mathscr{K}^{\mathbb{H}}$}\ar[d]_{\mbox{$\mathbb{H}\underset{r}{\ltimes} \cdot$}}\ar[r]^{\mbox{$\text{Ind}^{\mathbb{G}}_{\mathbb{H}}(\cdot)$}}&\mbox{$\mathscr{K}\mathscr{K}^{\mathbb{G}}$}\ar[d]^{\mbox{$\mathbb{G}\underset{r}{\ltimes} \cdot$}}\\
				\mbox{$\mathscr{K}\mathscr{K}^{\widehat{\mathbb{H}}}$}\ar[r]_{\mbox{$\text{Ind}^{\widehat{\mathbb{G}}}_{\widehat{\mathbb{H}}}(\cdot)$}}&\mbox{$\mathscr{K}\mathscr{K}^{\widehat{\mathbb{G}}}$}&}
		\end{split}
		\end{equation}
		
		Then given a torsion action of $\mathbb{H}$, $(B,\beta)\in\text{Tor}(\widehat{\mathbb{H}})$, $(\text{Ind}^{\mathbb{G}}_{\mathbb{H}}(B), \widetilde{\beta})$ is a torsion action of $\mathbb{G}$ (it might be the trivial one; recall the discussion of Section \ref{sec.StabilityTorsionFreeness}). Hence, by definition we have $\mathbb{G}\underset{r}{\ltimes} \text{Ind}^{\mathbb{G}}_{\mathbb{H}}(B, \beta)\in\mathscr{L}_{\widehat{\mathbb{G}}}$. Also by definition we have that $\mathbb{H}\underset{r}{\ltimes} B\in\mathscr{L}_{\widehat{\mathbb{H}}}$. If Diagram (\ref{eq.DiagramStabilityIndSubgroup}) commutes, then we will have $\text{Ind}^{\widehat{\mathbb{G}}}_{\widehat{\mathbb{H}}}\Big(\mathbb{H}\underset{r}{\ltimes} B\Big)\cong \mathbb{G}\underset{r}{\ltimes} \text{Ind}^{\mathbb{G}}_{\mathbb{H}}(B, \beta)\in \mathscr{L}_{\widehat{\mathbb{G}}}$ (where the isomorphism is in the Kasparov category $\mathscr{K}\mathscr{K}^{\widehat{\mathbb{G}}}$), which would yield then the desired inclusion. To prove commutativity of Diagram (\ref{eq.DiagramStabilityIndSubgroup}) observe that $\mathscr{K}\mathscr{K}^{\mathbb{H}}\overset{\mathbb{H}\ltimes \cdot}{\cong} \mathscr{K}\mathscr{K}^{\widehat{\mathbb{H}}}$ and $\mathscr{K}\mathscr{K}^{\mathbb{G}}\overset{\mathbb{G}\ltimes \cdot}{\cong} \mathscr{K}\mathscr{K}^{\widehat{\mathbb{G}}}$ by virtue of Baaj-Skandalis duality. Therefore, it is enough to show that $\widehat{\mathbb{G}}\underset{r}{\ltimes} \text{Ind}^{\widehat{\mathbb{G}}}_{\widehat{\mathbb{H}}}\Big(\mathbb{H}\underset{r}{\ltimes} B\Big)\cong \widehat{\mathbb{G}}\underset{r}{\ltimes} \Big(\mathbb{G}\underset{r}{\ltimes} \text{Ind}^{\mathbb{G}}_{\mathbb{H}}(B, \beta)\Big)$ as objects in $\mathscr{K}\mathscr{K}^{\mathbb{G}}$. In other words, by applying the quantum version of Green's imprimitivity theorem (see \cite{VaesInduction} for more details), we have to show that $\widehat{\mathbb{H}}\underset{r}{\ltimes} \Big(\mathbb{H}\underset{r}{\ltimes} B\Big)\cong \widehat{\mathbb{G}}\underset{r}{\ltimes} \Big(\mathbb{G}\underset{r}{\ltimes} \text{Ind}^{\mathbb{G}}_{\mathbb{H}}(B, \beta)\Big)\Leftrightarrow B\cong \text{Ind}^{\mathbb{G}}_{\mathbb{H}}(B, \beta)$ 
		as objects in $\mathscr{K}\mathscr{K}^{\mathbb{G}}$, which is true.
	\end{proof}
	
	The first direct consequence from these claims is a generalisation of \cite[Lemma 2.4.3]{RubenSemiDirect}:
	\begin{corSec}\label{cor.PropertiesResInd}
		Let $\mathbb{G}$, $\mathbb{H}$ be two compact quantum groups such that $\widehat{\mathbb{H}}<\widehat{\mathbb{G}}$. The following properties hold.
		\begin{enumerate}[i)]
			\item $\text{Res}^{\widehat{\mathbb{G}}}_{\widehat{\mathbb{H}}}(\mathscr{L}_{\widehat{\mathbb{G}}})\subset \mathscr{L}_{\widehat{\mathbb{H}}}$ and $\text{Res}^{\widehat{\mathbb{G}}}_{\widehat{\mathbb{H}}}(\mathscr{N}_{\widehat{\mathbb{G}}})\subset \mathscr{N}_{\widehat{\mathbb{H}}}$. Hence, we have the following natural isomorphisms $\text{Res}^{\widehat{\mathbb{G}}}_{\widehat{\mathbb{H}}}\circ L\cong L'\circ \text{Res}^{\widehat{\mathbb{G}}}_{\widehat{\mathbb{H}}}$ and $\text{Res}^{\widehat{\mathbb{G}}}_{\widehat{\mathbb{H}}}\circ N\cong N'\circ \text{Res}^{\widehat{\mathbb{G}}}_{\widehat{\mathbb{H}}}$.
			\item $\text{Ind}^{\widehat{\mathbb{G}}}_{\widehat{\mathbb{H}}}(\mathscr{L}_{\widehat{\mathbb{H}}})\subset \mathscr{L}_{\widehat{\mathbb{G}}}$ and $\text{Ind}^{\widehat{\mathbb{G}}}_{\widehat{\mathbb{H}}}(\mathscr{N}_{\widehat{\mathbb{H}}})\subset \mathscr{N}_{\widehat{\mathbb{G}}}$. Hence, we have the following natural isomorphisms $\text{Ind}^{\widehat{\mathbb{G}}}_{\widehat{\mathbb{H}}}\circ L'\cong L\circ \text{Ind}^{\widehat{\mathbb{G}}}_{\widehat{\mathbb{H}}}$ and $\text{Ind}^{\widehat{\mathbb{G}}}_{\widehat{\mathbb{H}}}\circ N'\cong N\circ \text{Ind}^{\widehat{\mathbb{G}}}_{\widehat{\mathbb{H}}}$.
		\end{enumerate}
		
		Consequently, $\text{Res}^{\widehat{\mathbb{G}}}_{\widehat{\mathbb{H}}}$ transforms the assembly map for $\widehat{\mathbb{G}}$ into the assembly map for $\widehat{\mathbb{H}}$ and $\text{Ind}^{\widehat{\mathbb{G}}}_{\widehat{\mathbb{H}}}$ transforms the assembly map for $\widehat{\mathbb{H}}$ into the assembly map for $\widehat{\mathbb{G}}$.
	\end{corSec}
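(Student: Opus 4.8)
The plan is to deduce the Corollary formally from the two results that precede it, observing that neither of them uses torsion-freeness or divisibility. Proposition~\ref{pro.StabilityResIndSubgroups} supplies the two inclusions at the level of the $\mathscr{L}$-subcategories, namely $Ind^{\widehat{\mathbb{G}}}_{\widehat{\mathbb{H}}}(\mathscr{L}_{\widehat{\mathbb{H}}})\subset \mathscr{L}_{\widehat{\mathbb{G}}}$ and $Res^{\widehat{\mathbb{G}}}_{\widehat{\mathbb{H}}}(\mathscr{L}_{\widehat{\mathbb{G}}})\subset \mathscr{L}_{\widehat{\mathbb{H}}}$. The Lemma immediately preceding it translates each of these, via the adjunction between $Res^{\widehat{\mathbb{G}}}_{\widehat{\mathbb{H}}}$ and $Ind^{\widehat{\mathbb{G}}}_{\widehat{\mathbb{H}}}$ together with the definition of $\mathscr{N}_{\widehat{\mathbb{G}}}$ and $\mathscr{N}_{\widehat{\mathbb{H}}}$ as the respective right orthogonals of $\mathscr{L}_{\widehat{\mathbb{G}}}$ and $\mathscr{L}_{\widehat{\mathbb{H}}}$, into the corresponding inclusions for the $\mathscr{N}$-subcategories: the inclusion $Ind^{\widehat{\mathbb{G}}}_{\widehat{\mathbb{H}}}(\mathscr{L}_{\widehat{\mathbb{H}}})\subset \mathscr{L}_{\widehat{\mathbb{G}}}$ yields $Res^{\widehat{\mathbb{G}}}_{\widehat{\mathbb{H}}}(\mathscr{N}_{\widehat{\mathbb{G}}})\subset \mathscr{N}_{\widehat{\mathbb{H}}}$, and the inclusion $Res^{\widehat{\mathbb{G}}}_{\widehat{\mathbb{H}}}(\mathscr{L}_{\widehat{\mathbb{G}}})\subset \mathscr{L}_{\widehat{\mathbb{H}}}$ yields $Ind^{\widehat{\mathbb{G}}}_{\widehat{\mathbb{H}}}(\mathscr{N}_{\widehat{\mathbb{H}}})\subset \mathscr{N}_{\widehat{\mathbb{G}}}$. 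This settles the four stated inclusions of $i)$ and $ii)$ in full generality.

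For the natural isomorphisms I would invoke the standard uniqueness argument for complementary pairs in a triangulated category. Both $Res^{\widehat{\mathbb{G}}}_{\widehat{\mathbb{H}}}$ and $Ind^{\widehat{\mathbb{G}}}_{\widehat{\mathbb{H}}}$ are triangulated functors commuting with countable direct sums (universal property of the Kasparov category, cf.\ \cite{VoigtPoincareDuality}). Given $A\in Obj(\mathscr{K}\mathscr{K}^{\widehat{\mathbb{G}}})$, apply $Res^{\widehat{\mathbb{G}}}_{\widehat{\mathbb{H}}}$ to the defining exact triangle $L(A)\to A\to N(A)\to \Sigma L(A)$; by the inclusions above the resulting exact triangle has its left term in $\mathscr{L}_{\widehat{\mathbb{H}}}$ and its third term in $\mathscr{N}_{\widehat{\mathbb{H}}}$, hence by uniqueness up to canonical isomorphism of the $(\mathscr{L}_{\widehat{\mathbb{H}}},\mathscr{N}_{\widehat{\mathbb{H}}})$-truncation of $Res^{\widehat{\mathbb{G}}}_{\widehat{\mathbb{H}}}(A)$ it is canonically isomorphic to $L'(Res^{\widehat{\mathbb{G}}}_{\widehat{\mathbb{H}}}(A))\to Res^{\widehat{\mathbb{G}}}_{\widehat{\mathbb{H}}}(A)\to N'(Res^{\widehat{\mathbb{G}}}_{\widehat{\mathbb{H}}}(A))\to \Sigma L'(Res^{\widehat{\mathbb{G}}}_{\widehat{\mathbb{H}}}(A))$. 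Naturality in $A$ follows from the functoriality of the truncation triangles, giving $Res^{\widehat{\mathbb{G}}}_{\widehat{\mathbb{H}}}\circ L\cong L'\circ Res^{\widehat{\mathbb{G}}}_{\widehat{\mathbb{H}}}$ and $Res^{\widehat{\mathbb{G}}}_{\widehat{\mathbb{H}}}\circ N\cong N'\circ Res^{\widehat{\mathbb{G}}}_{\widehat{\mathbb{H}}}$; the argument for $Ind^{\widehat{\mathbb{G}}}_{\widehat{\mathbb{H}}}$ is verbatim the same with the roles of $\widehat{\mathbb{G}}$ and $\widehat{\mathbb{H}}$ interchanged.

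Finally, for the statement on assembly maps I would argue exactly as in the proof of Lemma~\ref{lem.PropertiesResInd} (see \cite{RubenSemiDirect}): $\eta^{\widehat{\mathbb{G}}}$ is by definition the natural transformation $F\circ L\to F$ induced by the triangle $L\to \mathrm{id}\to N$, and the compatibility of the homological functors with induction, $F\circ Ind^{\widehat{\mathbb{G}}}_{\widehat{\mathbb{H}}}\cong F'$ together with the analogue on the restriction side, follows from the quantum Green imprimitivity theorem and Baaj--Skandalis duality, already exploited in the proof of Proposition~\ref{pro.StabilityResIndSubgroups}. Applying $Res^{\widehat{\mathbb{G}}}_{\widehat{\mathbb{H}}}$ (resp.\ $Ind^{\widehat{\mathbb{G}}}_{\widehat{\mathbb{H}}}$) to the morphism of triangles defining $\eta^{\widehat{\mathbb{G}}}_A$ (resp.\ $\eta^{\widehat{\mathbb{H}}}_B$) and inserting the natural isomorphisms of the previous paragraph then produces the morphism of triangles defining $\eta^{\widehat{\mathbb{H}}}_{Res^{\widehat{\mathbb{G}}}_{\widehat{\mathbb{H}}}(A)}$ (resp.\ $\eta^{\widehat{\mathbb{G}}}_{Ind^{\widehat{\mathbb{G}}}_{\widehat{\mathbb{H}}}(B)}$). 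Since the genuine content has already been extracted in Proposition~\ref{pro.StabilityResIndSubgroups} and the orthogonality Lemma, the Corollary is essentially bookkeeping; the only point I would watch carefully is that the equivalences of the orthogonality Lemma are applied in the correct direction and that the identification $F\circ Ind^{\widehat{\mathbb{G}}}_{\widehat{\mathbb{H}}}\cong F'$ is natural, and this is where I expect the (modest) main obstacle to lie.
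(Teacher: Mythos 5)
Your proposal is correct and follows exactly the route the paper intends: the corollary is stated there as a direct consequence of Proposition \ref{pro.StabilityResIndSubgroups} (the two $\mathscr{L}$-inclusions) combined with the orthogonality lemma (converting each into the corresponding $\mathscr{N}$-inclusion via the $Ind\dashv Res$ adjunctions), with the natural isomorphisms and the assembly-map statement then following by the same uniqueness-of-truncation argument as in Lemma \ref{lem.PropertiesResInd}. Your directions in applying the orthogonality lemma are the right ones, so nothing further is needed.
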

	
	Accordingly, we generalise \cite[Proposition 2.4.4]{RubenSemiDirect}:
	\begin{proSec}\label{cor.BCQuantumSubgroups}
		Let $\mathbb{G}$, $\mathbb{H}$ be two compact quantum groups such that $\widehat{\mathbb{H}}<\widehat{\mathbb{G}}$.
		\begin{enumerate}[i)]
			\item $\widehat{\mathbb{G}}$ satisfies the quantum Baum-Connes property if and only if every discrete quantum subgroup $\widehat{\mathbb{H}}< \widehat{\mathbb{G}}$ satisfies the quantum Baum-Connes property.
			\item If $\widehat{\mathbb{G}}$ satisfies the $\mathscr{L}_{\widehat{\mathbb{G}}}$-strong Baum-Connes property, then $\widehat{\mathbb{H}}$ satisfies the $\mathscr{L}_{\widehat{\mathbb{H}}}$-strong Baum-Connes property.
		\end{enumerate}
	\end{proSec}
	\begin{proof}
		The proof of the statement in $(i)$ is done verbatim the proof appearing in \cite[Proposition 2.4.4]{RubenSemiDirect} by applying now Corollary \ref{cor.PropertiesResInd}. For the statement in $(ii)$, we observe that we always have $\text{Res}^{\widehat{\mathbb{G}}}_{\widehat{\mathbb{H}}}(\mathscr{L}_{\widehat{\mathbb{G}}})\subset \mathscr{L}_{\widehat{\mathbb{H}}}$ as showed in Proposition \ref{pro.StabilityResIndSubgroups}. Assume that $\widehat{\mathbb{G}}$ satisfies the $\mathscr{L}_{\widehat{\mathbb{G}}}$-strong Baum-Connes property, which means that $\mathscr{L}_{\widehat{\mathbb{G}}}=\mathscr{K}\mathscr{K}^{\widehat{\mathbb{G}}}$. Given $(B,\beta)\in Obj(\mathscr{K}\mathscr{K}^{\widehat{\mathbb{H}}})$, we have thus $\text{Ind}^{\widehat{\mathbb{G}}}_{\widehat{\mathbb{H}}}(B)\in \mathscr{K}\mathscr{K}^{\widehat{\mathbb{G}}}=\mathscr{L}_{\widehat{\mathbb{G}}}$. Hence, $\text{Res}^{\widehat{\mathbb{G}}}_{\widehat{\mathbb{H}}}\Big(\text{Ind}^{\widehat{\mathbb{G}}}_{\widehat{\mathbb{H}}}(B)\Big)\in \mathscr{L}_{\widehat{\mathbb{H}}}$. To conclude recall that $B$ is a retract of $\text{Res}^{\widehat{\mathbb{G}}}_{\widehat{\mathbb{H}}}\Big(\text{Ind}^{\widehat{\mathbb{G}}}_{\widehat{\mathbb{H}}}(B)\Big)$ and that localization subcategories are closed under retracts, which implies that $B\in\mathscr{L}_{\widehat{\mathbb{H}}}$. So $\widehat{\mathbb{H}}$ satisfies the $\mathscr{L}_{\widehat{\mathbb{H}}}$-strong Baum-Connes property.
	\end{proof}
	
	\begin{remSec}
		It is well-known that the \emph{strong} Baum-Connes property is preserved by \emph{divisible} discrete quantum subgroups whenever $\widehat{\mathbb{G}}$ is torsion-free (see \cite[Lemma 6.7]{VoigtBaumConnesUnitaryFree} for a proof). The above corollary generalizes this property for any discrete quantum subgroup (not necessarily divisible and without any torsion-freeness assumption), which has been possible thanks to the more abstract approach in terms of fusion rings and module C$^*$-categories from Section \ref{sec.PreparatoryObs}. Moreover, it is important to mention that property $(ii)$ of the previous proposition appears already in \cite[Remark 3.20]{RubenAmauryTorsion}. The argument appearing there has been expanded in the present paper with Corollary \ref{cor.IndResTorsion} and Proposition \ref{pro.CorrespondenceTorsionActions}. However, property $(i)$ of the previous proposition concerning the usual Baum-Connes property is new and legitimate to consider now thanks to \cite{YukiBCTorsion}, \cite{KennyNestRubenBCProjective}.  
	\end{remSec}
	
	Moreover, Proposition \ref{cor.BCQuantumSubgroups} allows to improve some stability results for the quantum Baum-Connes property with respect to some relevant constructions, which already appear in \cite{RubenSemiDirect}, by removing torsion-freeness and divisibility assumptions.
	\begin{corSec}
		The following stability properties for the quantum (resp. strong) Baum-Connes property hold.
		\begin{enumerate}[i)]
			\item Let $\Gamma$ be a discrete group, $\mathbb{G}$ a compact quantum group. Assume that $\Gamma$ acts on $\mathbb{G}$ by quantum automorphisms, $\alpha$. Then $\Gamma$ and $\widehat{\mathbb{G}}$ satisfy the (resp. strong) Baum-Connes property whenever $\widehat{\Gamma\underset{\alpha}{\ltimes}\mathbb{G}}$ satisfies the (resp. strong) Baum-Connes property.
			\item Let $\Gamma$ be a discrete group, $G$ a compact group. Assume that $(\Gamma, G)$ is a matched pair with actions $\Gamma \overset{\alpha}{\curvearrowright} G$ and $G \overset{\beta}{\curvearrowright} \Gamma$. Then $\widehat{G}$ satisfies the (resp. strong) Baum-Connes property whenever $\widehat{\Gamma {_\alpha}\bowtie_{\beta}G}$ satisfies the (resp. strong) Baum-Connes property.
			\item Let $\mathbb{G}$ and $\mathbb{H}$ be two compact quantum groups. Then $\widehat{\mathbb{G}}$ and $\widehat{\mathbb{H}}$ satisfy the (resp. strong) Baum-Connes property whenever $\widehat{\mathbb{G}\times \mathbb{H}}$ satisfies the (resp. strong) Baum-Connes property.
			\item Let $\mathbb{G}$ and $\mathbb{H}$ be two compact quantum groups. Then $\widehat{\mathbb{G}}$ and $\widehat{\mathbb{H}}$ satisfy the (resp. strong) Baum-Connes property whenever $\widehat{\mathbb{G}\ast \mathbb{H}}$ satisfies the (resp. strong) Baum-Connes property.
		\end{enumerate}
	\end{corSec}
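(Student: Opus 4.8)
The plan is to reduce every item to Corollary \ref{cor.BCQuantumSubgroups} by observing that, in each of the four constructions $\mathbb{F}$, the relevant compact quantum groups are the duals of discrete quantum subgroups of $\widehat{\mathbb{F}}$. For $\mathbb{F}=\Gamma\underset{\alpha}{\ltimes}\mathbb{G}$ one has $\widehat{\mathbbold{\Gamma}}<\widehat{\mathbb{F}}$ and $\widehat{\mathbb{G}}<\widehat{\mathbb{F}}$; for $\mathbb{F}=\Gamma{_\alpha}\bowtie_{\beta}G$ one has $\widehat{G}<\widehat{\mathbb{F}}$ (the inclusion $C(G)\subset C(\mathbb{F})$ intertwines the co-multiplications, as recalled in Section \ref{sec.Notations}); and for $\mathbb{F}=\mathbb{G}\times\mathbb{H}$ or $\mathbb{F}=\mathbb{G}\ast\mathbb{H}$ one has $\widehat{\mathbb{G}}<\widehat{\mathbb{F}}$ and $\widehat{\mathbb{H}}<\widehat{\mathbb{F}}$. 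All of these inclusions are immediate from the representation-theoretic descriptions recalled in Section \ref{sec.Notations} (equivalently, they form part of what is established in Proposition \ref{pro.DivisibleConstructions}, although here the divisibility conclusion is not needed).

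For the quantum Baum-Connes property with coefficients the argument is then a one-liner: by Corollary \ref{cor.BCQuantumSubgroups}$(i)$, if $\widehat{\mathbb{F}}$ satisfies the quantum Baum-Connes property, then so does every discrete quantum subgroup of $\widehat{\mathbb{F}}$, and in particular each of the factors listed above. No torsion-freeness or divisibility hypothesis enters, which is precisely the improvement over the corresponding statements in \cite{RubenSemiDirect}; in case $(ii)$ the argument goes through even when $\beta$ is non-trivial, i.e.\ even when $\widehat{G}$ fails to be divisible in $\widehat{\mathbb{F}}$.

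For the strong version I would invoke Corollary \ref{cor.BCQuantumSubgroups}$(ii)$: the $\mathscr{L}_{\widehat{\mathbb{F}}}$-strong Baum-Connes property for $\widehat{\mathbb{F}}$ passes to the $\mathscr{L}_{\widehat{\mathbb{H}}}$-strong property for each $\widehat{\mathbb{H}}<\widehat{\mathbb{F}}$, the mechanism being the retract argument (a localizing subcategory is closed under retracts, $B$ is a retract of $Res^{\widehat{\mathbb{F}}}_{\widehat{\mathbb{H}}}\big(Ind^{\widehat{\mathbb{F}}}_{\widehat{\mathbb{H}}}(B)\big)$, and one uses $Ind^{\widehat{\mathbb{F}}}_{\widehat{\mathbb{H}}}(\mathscr{L}_{\widehat{\mathbb{H}}})\subset\mathscr{L}_{\widehat{\mathbb{F}}}$ together with $Res^{\widehat{\mathbb{F}}}_{\widehat{\mathbb{H}}}(\mathscr{L}_{\widehat{\mathbb{F}}})\subset\mathscr{L}_{\widehat{\mathbb{H}}}$ from Proposition \ref{pro.StabilityResIndSubgroups}). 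The only point requiring a word of care is cases $(i)$ and $(ii)$, where the factor $\Gamma$ is a genuine classical discrete group: what Corollary \ref{cor.BCQuantumSubgroups}$(ii)$ delivers is the $\mathscr{L}_{\widehat{\mathbbold{\Gamma}}}$-strong property, and by the discussion in Section \ref{sec.BCDiscreteQuantumGroups} this is stronger than — hence implies — the usual strong Baum-Connes conjecture for $\Gamma$ (existence of a $\gamma$-element equal to $\mathbbold{1}_{\mathbb{C}}$). Since all the analytic content already resides in Corollary \ref{cor.BCQuantumSubgroups} — hence in Proposition \ref{pro.StabilityResIndSubgroups}, Corollary \ref{cor.PropertiesResInd} and the commutativity of Diagram $(\ref{eq.DiagramStabilityIndSubgroup})$ — I do not expect any genuine obstacle; the only thing to keep track of is which factor plays the role of the discrete quantum subgroup in each of the four cases, together with the translation just described between the quantum $\mathscr{L}$-strong property and the classical strong Baum-Connes conjecture for the group-type factors.
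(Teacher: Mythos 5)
Your proposal is correct and matches the paper's (implicit) argument: the corollary is presented as an immediate consequence of Corollary \ref{cor.BCQuantumSubgroups} applied to the discrete quantum subgroup inclusions recorded in Section \ref{sec.Notations} and Proposition \ref{pro.DivisibleConstructions}, and your translation from the $\mathscr{L}_{\widehat{\mathbbold{\Gamma}}}$-strong property to the classical strong conjecture for $\Gamma$ is precisely the point made in the remark of Section \ref{sec.BCDiscreteQuantumGroups}. The only cosmetic slip is that in case $(ii)$ the conclusion concerns only $\widehat{G}$ (when $\beta$ is non-trivial, $\Gamma$ need not embed as a discrete quantum subgroup of the bicrossed product), so your caveat about the group-type factor is really only needed for case $(i)$.
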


\bibliographystyle{acm}
\bibliography{TorsionFreenessDivisibleDiscreteQuantumSubgroups}



 \end{document}